\documentclass[11pt,final]{amsart}

\usepackage{lmodern}


\usepackage[active]{srcltx}
\usepackage[utf8]{inputenc}
\usepackage[english]{babel}
\usepackage{graphicx}
\usepackage[T1]{fontenc}
\usepackage{amsmath}
\usepackage{amssymb} 
\usepackage{amsthm}
\usepackage{mathabx}
\usepackage{bbm} 
\usepackage{bm} 
\usepackage{eqnarray}
\usepackage{tikz}
\usepackage[numbers]{natbib}
\usepackage{array}
\usepackage[left=2.05cm,right=2.05cm,top=2.95cm,bottom=2.8cm]{geometry}
\usepackage[foot]{amsaddr} 

\usepackage{esint}

\usepackage{enumitem}

\usepackage{makecell}


\usepackage{setspace}

\usepackage[pdfborder={0 0 0}]{hyperref}

\numberwithin{equation}{section} 

\let\oldtocsection=\tocsection
 
\let\oldtocsubsection=\tocsubsection
 
\renewcommand{\tocsection}[2]{\hspace{0em}\oldtocsection{#1}{#2}}
\renewcommand{\tocsubsection}[2]{\hspace{3em}\oldtocsubsection{#1}{#2}}

\newcommand{\R}{\mathbb{R}}
\newcommand{\Rc}{\mathcal{R}}

\newcommand{\N}{\mathbb{N}}
\newcommand{\Prob}{\mathcal{P}}
\newcommand{\M}{\mathcal{M}} 
\newcommand{\J}{\mathcal{J}} 
\newcommand{\A}{\mathcal{A}} 
\newcommand{\D}{\mathcal{D}} 
 
\newcommand{\T}{\mathcal{T}}
\newcommand{\E}{\mathcal{E}}

\newcommand{\V}{\mathcal{V}}

\newcommand{\X}{\mathcal{X}}
\newcommand{\Y}{\mathcal{Y}}



\newcommand{\CE}{\mathcal{CE}}


\newcommand{\vbf}{\mathbf{v}} 
\newcommand{\wbf}{\mathbf{w}} 
\newcommand{\mbf}{\mathbf{m}} 
\newcommand{\Mbf}{\mathbf{M}} 
\newcommand{\nbf}{\mathbf{n}} 
\newcommand{\Bbf}{\mathbf{B}} 
\newcommand{\bbf}{\mathbf{b}} 

\newcommand{\Bc}{\mathcal{B}}



\newcommand{\ddr}{\mathrm{d}} 
\newcommand{\dr}{\partial}


\newcommand{\Div}{\mathrm{Div}} 

\newcommand{\1}{\mathbbm{1}} 

\newcommand{\dst}[1]{\displaystyle{#1}}


\newtheorem{theo}{Theorem}[section]
\newtheorem*{theo*}{Theorem}
\newtheorem{prop}[theo]{Proposition}

\newtheorem{lm}[theo]{Lemma}
\newtheorem{defi}[theo]{Definition}

\newtheorem*{asmp*}{Assumption}

\theoremstyle{remark}
\newtheorem{rmk}[theo]{Remark}


\makeatletter
\DeclareFontFamily{OMX}{MnSymbolE}{}
\DeclareSymbolFont{MnLargeSymbols}{OMX}{MnSymbolE}{m}{n}
\SetSymbolFont{MnLargeSymbols}{bold}{OMX}{MnSymbolE}{b}{n}
\DeclareFontShape{OMX}{MnSymbolE}{m}{n}{
    <-6>  MnSymbolE5
   <6-7>  MnSymbolE6
   <7-8>  MnSymbolE7
   <8-9>  MnSymbolE8
   <9-10> MnSymbolE9
  <10-12> MnSymbolE10
  <12->   MnSymbolE12
}{}
\DeclareFontShape{OMX}{MnSymbolE}{b}{n}{
    <-6>  MnSymbolE-Bold5
   <6-7>  MnSymbolE-Bold6
   <7-8>  MnSymbolE-Bold7
   <8-9>  MnSymbolE-Bold8
   <9-10> MnSymbolE-Bold9
  <10-12> MnSymbolE-Bold10
  <12->   MnSymbolE-Bold12
}{}

\let\llangle\@undefined
\let\rrangle\@undefined
\DeclareMathDelimiter{\llangle}{\mathopen}%
                     {MnLargeSymbols}{'164}{MnLargeSymbols}{'164}
\DeclareMathDelimiter{\rrangle}{\mathclose}%
                     {MnLargeSymbols}{'171}{MnLargeSymbols}{'171}
\makeatother



\newcommand{\review}[1]{#1}


\subjclass[2010]{Primary 65K10. Secondary 49M25, 35A15}

\begin{document}

\title{Unconditional convergence for discretizations of dynamical optimal transport} 

\date{\today}
\author{Hugo Lavenant}
\address{Department of Mathematics, University of British Columbia, Vancouver BC Canada V6T
1Z2}
\email{lavenant@math.ubc.ca}

\begin{abstract}
The dynamical formulation of optimal transport, also known as Benamou-Brenier formulation or Computational Fluid Dynamic\review{s} formulation, amounts to write the optimal transport problem as the optimization of a convex functional under a PDE constraint, and can handle \emph{a priori} a vast class of cost functions and geometries. Several \review{discretizations} of this problem have been proposed, leading to computations on flat spaces as well as Riemannian manifolds, with extensions to mean field games and gradient flows in the Wasserstein space. 

In this article, we provide a framework which guarantees convergence under mesh refinement of the solutions of the space-time discretized problems to the one of the infinite-dimensional one for quadratic optimal transport. The convergence holds without condition on the ratio between spatial and temporal step sizes, and can handle arbitrary positive measures as input, while the underlying space can be a Riemannian manifold. Both the finite volume discretization proposed by Gladbach, Kopfer and Maas, as well as the discretization over triangulations of surfaces studied by the present author in collaboration with Claici, Chien and Solomon fit in this framework.     
\end{abstract}

\maketitle

\tableofcontents

\section{Introduction}

In this article, we study a problem of calculus of variations of the form 
\begin{equation}
\label{equation_problem_unformal_v}
\min_{\rho, \vbf} \left\{ \iint \frac{1}{2} \rho |\vbf|^2 \, \ddr t \, \ddr x  \ \text{ such that } \ \dr_t \rho + \nabla \cdot ( \rho \vbf ) = 0 \right\},
\end{equation}
where the unknowns are $\rho$ a space-time dependent density and $\vbf$ a space-time dependent velocity field. These two unknowns are linked by the continuity equation $\dr_t \rho + \nabla \cdot ( \rho \vbf ) = 0$, supplemented with no-flux boundary conditions, which says that the mass with density $\rho$ is advected by the velocity field $\vbf$. Provided the velocity field is smooth enough (which is not always the case for the optimizer), $\rho$ is uniquely determined by its temporal boundary values and $\vbf$, hence $\vbf$ could be \review{thought} as a control. The functional which is optimized is the \emph{action}, that is the space-time integral of the density of kinetic energy. The initial and final temporal values of $\rho$ can be either fixed, or a penalization depending on them can be added in the objective functional.  

Instances of this problem appear in at least two situations. If the initial and final values $\rho_0$ and $\rho_1$ of $\rho$ are fixed, the problem \eqref{equation_problem_unformal_v} amounts to compute a geodesic in the Wasserstein space (see \cite{Benamou2000} or \cite[Chapter 5]{SantambrogioOTAM}), also known as the displacement interpolation (or McCann's interpolation \cite{Mccann1997}) between the initial and the final value of $\rho$. In short, the optimal $\rho$ is for some geometry the shortest curve joining its endpoints $\rho_0$ and $\rho_1$, while the value of the problem is the squared Wasserstein distance between $\rho_0$ and $\rho_1$. If the final value is penalized rather than fixed, \eqref{equation_problem_unformal_v} reads as one step of the minimizing movement scheme (also known as the JKO scheme after \cite{Jordan1998}) for gradient flows in the Wasserstein space (see for instance \cite{AGS} or \cite[Chapter 8]{SantambrogioOTAM}). On the other hand, one could also add a running cost depending on the density to the action, and then the problem would read as the variational formulation of an instance of Mean Field Games \cite{Benamou2016}: roughly, $\rho$ describes the evolution in time of a density of a crowd of agents trying to reach a final destination with minimum effort while avoiding other agents. 

In this article, we will mainly focus on the case where the initial and final values of $\rho$ are fixed (that is the computation of a geodesic in the Wasserstein space) because this case concentrates most of the difficulties. In the last section, we will also deal with a final penalization of the density which does not contain additional technicalities, and we will comment on the issues when on tries to add a running cost depending on the density.

\bigskip

Since the seminal work of Benamou and Brenier \cite{Benamou2000}, it is understood that \eqref{equation_problem_unformal_v} can be recast as a convex problem: the idea is to introduce a new unknown $\mbf = \rho \vbf$ as the continuity equation becomes the linear constraint $\dr_t \rho + \nabla \cdot \mbf = 0$, while the density of kinetic energy 
\begin{equation*}
\frac{1}{2} \rho |\vbf|^2 = \frac{|\mbf|^2}{2 \rho}
\end{equation*}
is a jointly convex function of $\rho$ and $\mbf$. Hence, we will rather consider the problem 
\begin{equation}
\label{equation_problem_unformal}
\min_{\rho, \mbf} \left\{ \iint \frac{|\mbf|^2}{2 \rho} \, \ddr t \, \ddr x  \ \text{ such that } \ \dr_t \rho + \nabla \cdot \mbf = 0 \right\},
\end{equation}
whose rigorous formulation is given below in Section \ref{section_framework}. With this convex reformulation, one can hope to compute numerically solutions of the problem \eqref{equation_problem_unformal}: it has been done first in \cite{Benamou2000} and in many subsequent works detailed below. In all of these works, problem \eqref{equation_problem_unformal} is discretized in space and time, yielding a finite dimensional convex optimization problem. Thanks to the general theory of convex optimization, there are strong guarantees that iterative schemes will indeed provide a very good approximation of the finite dimensional problems resulting from the discretization of \eqref{equation_problem_unformal}. However, to the best of the author's knowledge, there are few proofs that solutions of the finite dimensional problems will indeed converge to solutions of \eqref{equation_problem_unformal} as the spatial and temporal grids are refined. 

This problem is delicate and does not fit in classical theories for two (linked) reasons. First, the natural framework for the continuous problem is the set of positive measures: indeed, \eqref{equation_problem_unformal} admits a solution if $\rho_0$ and $\rho_1$ the initial and final values of the density are fixed and equal to arbitrary positive measures sharing the same total mass. Moreover, there is no spatial regularizing effect: if $\rho_0$ is a Dirac mass in $x$ while $\rho_1$ is a Dirac mass is $y$, then one solution is $t \mapsto \rho_t = \delta_{\gamma(t)}$, where $\gamma$ is a constant-speed geodesic joining $x$ to $y$. Thus a satisfying convergence result should work even if the datum are spatially very singular which makes all the theory of finite elements phrased in Hilbert spaces is not available. In addition, the function $(\rho, \mbf) \mapsto |\mbf|^2/(2 \rho)$ is not continuous at the point $(0,0)$, meaning that one has to take care of what happens when there is void, i.e. when the density vanishes. As there is no spatial regularization effect, one really \review{has} to face this issue. In the infinite-dimensional theory this issue is now well-understood (see for instance \citep[Chapter 8]{AGS}), but it is not immediate to see how such theory will apply for discretizations of \eqref{equation_problem_unformal}.

\subsection{Related work}

\begin{figure}
\begin{center}
\begin{tikzpicture}[scale = 0.5]


\draw [<-, line width = 1pt] (1,0) -- (19,0) ;
\draw (10,-0.5) node[below]{$\tau$ temporal step size} ;

\fill [color = black] (19,0)  circle (0.2)  ;
\draw (19,0) node[below]{$\tau = 0$} ;


\draw [<-, line width = 1pt] (0,1) -- (0,11) ;
\draw (-0.5,6) node[above, rotate = 90]{ $\sigma$ spatial step size} ;

\fill [color = black] (0,11)  circle (0.2)  ;
\draw (0,11) node[above]{$\sigma = 0$} ;


\draw (19,11) node[draw]{\makecell[c]{Original problem\\\eqref{equation_problem_unformal}}} ;

\draw (6,3) node[draw]{\makecell[c]{Fully discretized\\problem}} ;

\draw (19,3) node[draw]{\makecell[c]{Semi-discretized\\problem \cite{Maas2011}}} ;


\draw [->, dashed, line width = 1pt] (10,3) -- (15,3) ;
\draw (12.5,3) node[below]{ \small{$\Gamma$-convergence \cite{Erbar2017}} } ;

\draw [->, dashed, line width = 1pt] (19,5) -- (19,9) ;
\draw (19,7) node[right]{ \makecell[l]{\small{Gromov-Hausdorff}\\\small{convergence \cite{Gigli2013, Gladbach2018}} }} ;

\draw [->, dashed, line width = 1pt] (6,5) -- (15,10) ;
\draw (12,6.5) node[right]{ \makecell[l]{\small{$\Gamma$-convergence}\\\small{for finite difference}\\\small{under regularity}\\\small{assumption \cite{Carrillo2019}} }} ;


\draw [->,  line width = 1.5pt] (6,5.5) -- (15,10.5) ;
\draw (10,8) node[left]{ \small{ \textbf{This article} } } ;

\end{tikzpicture}
\end{center}

\caption{Schematic representation of other convergence results present in the literature (dashed lines) and ours (solid line). If one combines the result of \cite{Erbar2017} and \cite{Gladbach2018} then one gets a ``diagonal'' arrow but with conditions on the ratio between temporal and spatial step sizes. Compared to \cite{Carrillo2019} we do not need to assume regularity of the solution of \eqref{equation_problem_unformal} and we can work with more general discretizations than finite difference. ``Convergence'' has a different precise meaning for each arrow, we refer the reader to the original articles to get precise statements.}

\label{figure_previous_results}

\end{figure}
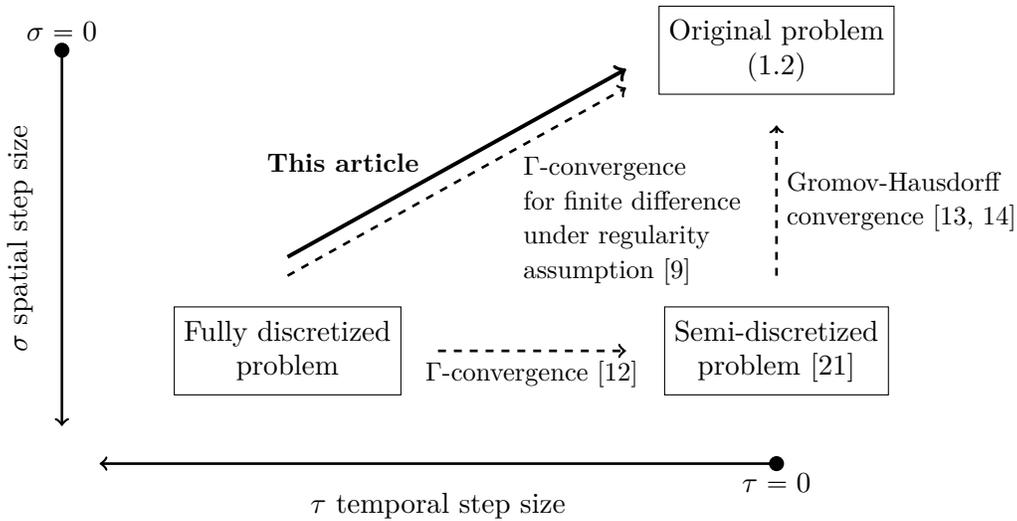

Since the seminal article \cite{Benamou2000}, formulation \eqref{equation_problem_unformal} and its variations have been used and solved numerically after discretization in a variety of context: congested dynamic \cite{Buttazzo2009}, Wasserstein gradient flows \cite{Benamou2016gradient, Carrillo2019}, variational mean field games \cite{Benamou2015, Benamou2016}, unbalanced optimal transport \cite{Chizat2018}, Wasserstein geodesics over surfaces \cite{Lavenant2018} to give a few examples. 

Since the augmented Lagrangian method proposed in \cite{Benamou2000} to solve the (finite-dimensional) problem obtained after discretization, variants have been proposed in the more general framework of \emph{proximal splitting} algorithms \cite{Papadakis2014, Carrillo2019}, and recently with the use of a clever Helmholtz-Hodge decomposition to handle the divergence constraint \cite{Henry2019}. Proofs of convergence of these convex optimization algorithms in the infinite dimensional case were provided by \cite{Guittet2003, Hug2016} even though they are phrased in the framework of Hilbert spaces (that is the authors work with densities being $L^2$ w.r.t. space and time) and not in the one of positive measures. 

Starting with considerations far from numerics, Maas \cite{Maas2011} defined a notion of optimal transport on graphs, which can be read as a semi-discretization of \eqref{equation_problem_unformal}: the time variable is kept continuous, but the spatial variable is replaced by a discrete one, namely the objects are defined on either the vertices or edges of a graph. In this framework, it was later proved in \cite{Gigli2013} and \cite{Gladbach2018} that, if the discretization of space is refined (if one interprets the graphs as a finite volume discretization of the space), then \review{one} gets a finer and finer approximation of \eqref{equation_problem_unformal}: actually it is phrased as a Gromov-Hausdorff convergence of metric spaces (namely, convergence of the spaces of probability measures endowed with the Wasserstein distance). More is said in Subsection \ref{subsection_finite_volumes}, as the discretization proposed by Maas and collaborators fits into our framework. We mention that \cite{Erbar2017} offers a temporal discretization of Problem \eqref{equation_problem_unformal} if the space is already discretized as a graph, and proves convergence of solutions when one refines the temporal discretization, see Figure \ref{figure_previous_results} for a schematic view of these different results. However, if one tries to combine convergence under refinement of temporal discretizations \cite{Erbar2017} and spatial discretization \cite{Gladbach2018}, it leads to a restriction on the ratio between the temporal and spatial step sizes. On the contrary, we will show in the present work that convergence holds without any such restriction. 

Eventually, in \cite{Carrillo2019}, the authors prove convergence of a fully discretized version of \eqref{equation_problem_unformal} to the original problem, though this is not the main point of their article. However, they work only with a finite difference discretization and make strong regularity assumptions on the solution of the problem \eqref{equation_problem_unformal} (density bounded from below by a strictly positive constant, and smooth density and momentum) which are not satisfied if one works with arbitrary positive measures as temporal boundary conditions. As the authors point out \cite[Remark 8]{Carrillo2019}, they observe numerically that their method seems to work even if their strong regularity assumptions are not satisfied.   

\review{After the diffusion of an earlier version of this article, Natale and Todeschi have proposed a finite element discretization of dynamical optimal transport \cite{NataleTodeschi2020} and proved that it fits in the framework of the present work.}

\subsection{Contribution and organization of this article}

The goal of this article is to provide a framework which automatically guarantees \review{convergence} of solutions of a fully discretized version of \eqref{equation_problem_unformal} to the ones of \eqref{equation_problem_unformal}. We will allow initial and final values of the density to be arbitrary positive measures (sharing the same total mass), hence do not require any regularity of the solution of \eqref{equation_problem_unformal}. Moreover, we will give generic conditions on the spatial discretization for this convergence to hold: these conditions will be generic enough to encompass the finite volume discretization of \cite{Gladbach2018} as well as the discretization on triangulations of surfaces proposed by the present author in \cite{Lavenant2018}. The underlying spatial space will be allowed to be either a convex domain in a Euclidean space or a Riemannian manifold as in \cite{Lavenant2018}.

However our result is not quantitative at all: a study of the speed of convergence would likely depend on the regularity of the solution of the infinite dimensional problem but we prefer to focus on the most generic case for the original problem \eqref{equation_problem_unformal}. Moreover, the functionals that we are minimizing are $1$-homogeneous, hence not strictly convex: even a speed of convergence for the value of the problems would not automatically lead to a speed of convergence for the solutions. To the best of our knowledge, this question is completely open.

\bigskip  	
  	
In the rest of this article, we first state precisely the problem \eqref{equation_problem_unformal} we aim to discretize, and sufficient conditions on the spatial discretization to ensure convergence of solutions of \review{the} fully space-time discretized problem: this is the object of Section \ref{section_framework}. The proof of our convergence result is provided in Section \ref{section_proof}. Then in Section \ref{section_examples} we show that our framework applies to spatial discretizations already present in the litterature, namely the one of Gladbach, Kopfer and Maas \cite{Gladbach2018}, as well as the one we proposed with Claici, Chien and Solomon \cite{Lavenant2018}. Eventually, we show in Section \ref{section_extension} that \review{with} little additional work, one can add a penalization of the final density in the functional to be minimized (retrieving one step of the JKO scheme for Wasserstein gradient flows) while still having guarantees of convergence, but that the addition of a running cost depending on the density yields more involved issues that we do not cover.  

\begin{rmk}
We want to emphasize that the methods of proof and general ideas of this article are very much inspired from the aforementioned works \cite{Gladbach2018, Erbar2017, Carrillo2019}. Our techniques are closely related to theirs. However, we think that the framework that we propose shades a new light on their ideas and clarifies between what is really necessary to get this kind of convergence and what was contingent to the specific choice of discretization. 
\end{rmk}

\section{Framework and statement of the result}

If $A$ is a subset of $B$, the function $\1_A$, defined on $B$, takes the value $1$ on $A$ and $0$ elsewhere. In the rest of this article, we will use $C$ to denote a constant independent on some parameters (specified in the context) whose valued may change from line to line. Similarly, we will use $(\varepsilon_\sigma)_\sigma$ to denote a generic function which tends to $0$ as $\sigma \to 0$, but which may change from line to line. 

\label{section_framework}

\subsection{Infinite dimensional problem}

As far as space is concerned, we work with $(X,g)$ a smooth compact Riemannian manifold possibly with a non-empty boundary. The boundary of $X$, denoted by $\dr X$ is also assumed to be infinitely smooth and, importantly, \emph{convex}. For the definition of a convex boundary, we refer to \cite{Wang2009}, see also \cite{Bartolo2002} for a geometric point of view. As an example, the reader can have in mind convex bounded domains in the Euclidean space with smooth boundary or smooth compact Riemannian manifold without boundary. 

\begin{rmk}
The smoothness and convexity assumptions on the boundary will be used only in Proposition \ref{prop_regularization} to regularize curves of measures. However, experiments \cite[Fig. 10]{Lavenant2018} suggest that they probably are not optimal.  
\end{rmk} 

For each point $x \in X$, we denote the tangent space by $T_x X$, this space is equipped with a scalar product $g_x$ and a norm $| \ |_x$. The volume measure is denoted by $\ddr x$, if not specified integration is always performed w.r.t. this measure. The Riemannian distance is $d_g$. The tangent bundle is denoted by $T X$ and $\pi : TX \to X$ is the canonical projection. The space $C(X)$ is the one of continuous functions over $X$, and by a slight abuse of notations we denote by $C(X, TX)$ the space of continuous vector fields over $X$, i.e. of mappings $\vbf : X \to TX$ such that $\pi(\vbf(x)) =x$ for all $x \in X$. We will use bold letters to denote vector fields. These spaces are endowed with the supremum norm, generating the topology of uniform convergence. \review{The field $\nbf_{\dr X} : \dr X \to TX$ is the outward normal to $\dr X$.}

We denote by respectively $\M(X)$ and $\M(TX)$ the (topological) duals of $C(X)$ and $C(X, TX)$ and the duality products will be denoted $\langle \cdot, \cdot \rangle$. We use the same letters for duality products in $C(X)$ and $C(X,TX)$ as it will be clear from context which one is used. The space $\M(X)$, which is nothing else than the space of Borel measures on $X$, contains the convex subspace $\M_+(X)$ made of positive measures. The operator norm of an element $\rho \in \M(X)$ is nothing else than the total variation norm of the measure and is denoted by $\|\rho \|$, and analogously for $\M(TX)$. In particular, if $\rho \in \M_+(X)$, $\|\rho \| = \langle \rho, 1 \rangle$ and such a quantity is called the \emph{mass} of $\rho$. We will often identify a measure with its density w.r.t. the volume measure.

An important particular case (where our results are already new and interesting) is the flat one, namely when $X = \Omega \subset \R^d$ is a closed convex subset of $\R^d$ with smooth boundary. In this case, the volume measure is the Lebesgue measure, $C(X) = C(\Omega)$ and $C(X, TX) = C(\Omega, \R^d)$ is the set of continuous functions valued into $\R^d$. Moreover, $\M(TX) = \M(\Omega)^d$ is nothing else than the set of measures valued in $\R^d$ and defined over $\Omega$.

If $\rho \in \M(X)$ and $\mbf \in \M(TX)$, we define 
\begin{equation}
\label{equation_Benamou_Brenier_x}
A(\rho, \mbf) := \sup_{a,\bbf} \left\{ \langle  \rho, a \rangle + \langle  \mbf, \bbf \rangle \ : \ a \in C(X), \ \bbf \in C(X, TX) \text{ and } \forall x \in X, \  a(x) + \frac{|\bbf(x)|^2_x}{2} \leqslant 0 \right\}.
\end{equation} 
This is the so-called Benamou-Brenier formula: it is clearly a convex lower semi-continuous functional, and (see \cite[Proposition 5.18]{SantambrogioOTAM}) it is finite if and only if $\rho \in \M_+(X)$ and $\mbf$ has a density $\vbf : X \to TX$ w.r.t. $\rho$, and in such a case
\begin{equation*}
A(\rho, \mbf) = A(\rho, \rho \vbf ) = \int_X \frac{1}{2} |\vbf(x)|^2_x ~ \rho(\ddr x).
\end{equation*} 
\review{In other words one can write $A(\rho, \mbf) = \int_X |\mbf|^2/(2 \rho)$ with the convention that for a scalar $\rho \in \R$ and a vector $\mbf \in T_x X$ 
\begin{equation}
\label{equation_convention_b22a}
\frac{|\mbf|^2}{2\rho} := \begin{cases}
+ \infty & \text{if } \rho < 0, \\
+ \infty & \text{if } \rho = 0 \text{ and } |\mbf|_x > 0, \\
0 & \text{if } \rho=|\mbf|_x = 0, \\
\dst{\frac{|\mbf|_x^2}{2\rho}} & \text{otherwise}.
\end{cases}
\end{equation}}
\noindent We will keep such a convention in the rest of the article. Moreover, let us remark that if $\rho \in \M_+(X)$, then clearly one can take \review{$a(x) = - |\bbf(x)|_x^2/2$} in formulation \eqref{equation_Benamou_Brenier_x}. Hence, if one defines $A^\star$ on $\M(X) \times C(X,TX)$ by 
\begin{equation*}
A^\star(\rho, \bbf) := \frac{1}{2} \int_X |\bbf(x)|_x^2 ~ \rho(\ddr x) = \frac{1}{2} \langle \rho, |\bbf|^2 \rangle,
\end{equation*}
then one can simply write 
\begin{equation*}
A(\rho, \mbf) = \sup_{\bbf \in C(X,TX)} \left( \langle \mbf, \bbf \rangle - A^\star(\rho, \bbf) \right)
\end{equation*}
at least if $\rho \in \M_+(X)$, that is if we assume that $\rho$ is positive. The functional $A$ is nothing else than the Legendre transform of $A^\star$ w.r.t. its second variable.  

\begin{rmk}
In the article \cite{Gladbach2018}, the notations for $A$ and $A^\star$ are swapped compared to here. This is because we try to work as much a possible with $(\rho, \mbf)$ as unknowns which are thought as ``primal'' variables, even though rigorously they should be considered as ``dual'' variables.  
\end{rmk}

As far as time is concerned, we assume that the initial time is $t=0$ and the final time $t=1$. Hence the temporal domain is $[0,1]$. This space is endowed with the Lebesgue measure $\ddr t$, and we denote by $\ddr t \otimes \ddr x$ the measure on $[0,1] \times X$ which is the tensorial product of the Lebesgue measure on $[0,1]$ and the volume measure on $X$. We naturally define the spaces $C([0,1] \times X)$ and $C([0,1] \times X, TX)$ of respectively space-time dependent scalar functions and space-time dependent velocity fields. They are endowed with the supremum norm and their topological duals are denoted by $\M([0,1] \times X)$ and $\M([0,1] \times TX)$. Similarly to the previous case, $\M_+([0,1] \times X)$ denotes the subset of $\M([0,1] \times X)$ made of positive measures. The norm (operator norm which is the total variation norm) of $\rho \in \M([0,1] \times X)$ is denoted by $\|\rho \|$, analogously for $\M([0,1] \times TX)$. To make the distinction apparent, we will use double brackets $\llangle \cdot, \cdot \rrangle$ for the duality products of space-time dependent objects. The Benamou-Brenier formula can be extended to space-time dependent objects: if $(\rho, \mbf) \in \M([0,1] \times X) \times \M([0,1] \times TX)$,
\begin{multline*}
\A(\rho, \mbf) := \sup_{a,\bbf} \Bigg\{  \llangle \rho, a \rrangle + \llangle \mbf, \bbf \rrangle \ : \ a \in C([0,1] \times X), \ \bbf \in C([0,1] \times X, TX) \\
\text{ and } \forall (t,x) \in [0,1] \times X, \  a(t,x) + \frac{|\bbf(t,x)|^2_x}{2} \leqslant 0 \Bigg\}.
\end{multline*} 
Similarly, we define $\A^\star$ on $\M([0,1] \times X) \times C([0,1] \times X, X)$ by 
\begin{equation*}
\A^\star(\rho, \bbf) := \frac{1}{2} \iint_{[0,1] \times X} |\bbf(t,x)|_x^2 ~ \rho(\ddr t, \ddr x) = \frac{1}{2} \llangle \rho, |\bbf|^2 \rrangle,
\end{equation*}
and one can simply write, if $\rho \in \M_+([0,1] \times X)$ 
\begin{equation*}
\A(\rho, \mbf) = \sup_{\bbf \in C([0,1] \times X,TX)} \left( \llangle \mbf, \bbf \rrangle - \A^\star(\rho, \bbf) \right).
\end{equation*}
Actually, thanks to a smoothing argument we can even take the supremum over vector fields $\bbf$ in $C^1([0,1] \times X, TX)$ and the identity stays valid. 


The gradient $\nabla$ maps $C^1(X)$ into $C(X,TX)$ while the divergence $\nabla \cdot$ is the adjoint of the gradient w.r.t. the $L^2$ scalar product (weighted by the volume measure). Namely, if $f \in C^1(X)$ and $\mathbf{g} \in C^1(X, TX)$ vanish on a neighborhood of $\dr X$, 
\begin{equation*}
\int_X g_x(\nabla f(x), \mathbf{g}(x)) ~ \ddr x = - \int_X f(x) ( \nabla \cdot \mathbf{g} )(x) ~ \ddr x.
\end{equation*}

If $\rho_0, \rho_1 \in \M_+(X)$, we define $\CE(\rho_0, \rho_1)$ the set of pairs $(\rho, \mbf)$ which satisfies the continuity equation with initial and final values given by respectively $\rho_0$ and $\rho_1$ as follows: 
\begin{align*}
\CE(\rho_0, \rho_1) := \Bigg\{ (\rho, \mbf) \ : \ \forall \phi \in C^1([0,1] \times X), \ 
\llangle \rho, \dr_t \phi \rrangle + \llangle \mbf, \nabla \phi \rrangle = \langle \phi(1, \cdot) , \rho_1 \rangle - \langle \phi(0, \cdot),  \rho_0  \rangle \Bigg\} \\ \subset\M([0,1] \times X) \times \M([0,1] \times TX). 
\end{align*}
Indeed, it is nothing else than a weak formulation of the continuity equation $\dr_t \rho + \nabla \cdot \mbf = 0$ with no-flux boundary conditions \review{$\mbf \cdot \nbf_{\dr X} = 0$} on $\dr X$. In particular, if $\rho_0$ and $\rho_1$ do not have the same total mass, it is easy to see that this set is empty by testing with a constant function.

\begin{rmk}
\label{rmk_rho_curve}
Let $(\rho, \mbf)$ such that $(\rho, \mbf) \in \CE(\bar{\rho}_0, \bar{\rho}_1)$ for some $\bar{\rho}_0, \bar{\rho}_1 \in \M_+(X)$ and $\A(\rho, \mbf) < + \infty$. Thanks to the continuity equation, it is not difficult to see that the temporal marginal of $\rho$ is proportional to the Lebesgue measure on $[0,1]$. In particular, we can define $(\rho_t)_{t \in [0,1]}$ a curve valued in $\M_+(X)$ as the disintegration of $\rho$ w.r.t. its temporal marginal \cite[Theorem 5.3.1]{AGS}. Thanks to the continuity equation and the estimate $\A(\rho, \mbf) < + \infty$, in fact the map $t \mapsto \rho_t \in \M_+(X)$ is continuous for the topology of weak convergence (actually even $1/2$-Hölder w.r.t. the Wasserstein distance) and $\bar{\rho}_0$ (resp. $\bar{\rho}_1$) is the limit as $t \to 0$ (resp. $t \to 1$) of $\rho_t$. In short, $\rho \in \M_+([0,1 \times X])$ can also be seen as a continuous curve valued in $\M_+(X)$, and $\bar{\rho}_0, \bar{\rho}_1$ are the values of the curve at the temporal boundaries.  
\end{rmk}

\bigskip

Let $\rho_0, \rho_1 \in \M_+(X)$ sharing the same total mass. We define the functional $\J_{\rho_0, \rho_1}$ on $\M([0,1] \times X) \times \M([0,1] \times TX)$ by 
\begin{equation*}
\J_{\rho_0, \rho_1}(\rho, \mbf) := \begin{cases}
\A(\rho, \mbf) & \text{if } (\rho, \mbf) \in \CE(\rho_0, \rho_1), \\
+ \infty & \text{otherwise}.
\end{cases}
\end{equation*}
Given what is said above, minimizing $\J_{\rho_0, \rho_1}$ amounts to solve problem \eqref{equation_problem_unformal} described in the introduction. This is a convex optimization problem defined on the space $\M([0,1] \times X) \times \M([0,1] \times TX)$ endowed with the topology of weak convergence.

We will denote by $W_2$ the quadratic Wasserstein distance, see Appendix \ref{section_wasserstein}. The functional $\J_{\rho_0, \rho_1}$ is closely related to this distance as mentioned in the introduction and the next theorem summarizes such a link. However, we will not rely on it in the sequel: the reader unfamiliar with the theory of optimal transport can skip it for a first reading.   

\begin{theo}
\label{theorem_J_W2}
Under the assumption that $\rho_0$ and $\rho_1$ share the same total mass, there exists a minimizer to the functional $\J_{\rho_0, \rho_1}$ and 
\begin{equation}
\label{equation_J_Wasserstein}
\min_{\M([0,1] \times X) \times \M([0,1] \times TX)} \  \J_{\rho_0, \rho_1} = \frac{1}{2} W_2^2(\rho_0, \rho_1).
\end{equation}
Moreover, at least in the case where $X$ is a flat convex domain or a Riemannian manifold without boundary and if $\rho_0$ or $\rho_1$ is absolutely continuous w.r.t. the volume measure then the minimizer is unique. 
\end{theo}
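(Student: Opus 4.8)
The plan is to treat the three assertions separately: existence by the direct method, the value identity by the classical Benamou-Brenier equivalence, and uniqueness by the Brenier-McCann theory of optimal maps.

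For \textbf{existence} I would run the direct method in the weak-$*$ topology. First one checks that the infimum is finite: since $X$ is compact its diameter is bounded, so $W_2(\rho_0,\rho_1)<+\infty$, and one can exhibit at least one competitor of finite action (for instance by gluing a curve built from any transport plan). Next, for a minimizing sequence $(\rho^n,\mbf^n)$ one extracts uniform total-variation bounds: testing the continuity equation against the constant function shows each time slice keeps the common mass $m$, so $\|\rho^n\|=m$; and since finiteness of $\A$ forces $\mbf^n=\rho^n\vbf^n$, Cauchy-Schwarz gives $\|\mbf^n\|\leqslant\sqrt{2\,\A(\rho^n,\mbf^n)\,\|\rho^n\|}$, which stays bounded along the sequence. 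By Banach-Alaoglu one passes to a weak-$*$ limit $(\rho,\mbf)$. The constraint set $\CE(\rho_0,\rho_1)$ is defined by linear identities against fixed $C^1$ test functions, hence weak-$*$ closed, and $\A$, being a supremum of weak-$*$ continuous linear functionals, is weak-$*$ lower semicontinuous; together these make $(\rho,\mbf)$ a minimizer.

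For the \textbf{value identity} \eqref{equation_J_Wasserstein} I would prove the two inequalities. For ``$\leqslant$'', take an optimal transport plan and build from McCann's displacement interpolation \cite{Mccann1997} the pair $(\rho,\mbf)$, with $\rho_t$ the interpolating measures and $\mbf_t=\rho_t\vbf_t$ the corresponding momentum; one checks $(\rho,\mbf)\in\CE(\rho_0,\rho_1)$ and that its action equals $\tfrac12 W_2^2(\rho_0,\rho_1)$. For ``$\geqslant$'', invoke the curve viewpoint of Remark \ref{rmk_rho_curve}: any $(\rho,\mbf)$ with finite action yields a continuous curve $t\mapsto\rho_t$ in $\M_+(X)$ solving the continuity equation, and the characterization of absolutely continuous curves in Wasserstein space \cite[Theorem 8.3.1]{AGS} bounds the metric derivative $|\rho_t'|$ by $\big(\int|\vbf_t|^2\,\ddr\rho_t\big)^{1/2}$. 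Integrating and applying Jensen and Cauchy-Schwarz gives $\A(\rho,\mbf)\geqslant\tfrac12\big(\int_0^1|\rho_t'|\,\ddr t\big)^2\geqslant\tfrac12 W_2^2(\rho_0,\rho_1)$, since the length of the curve dominates the distance between its endpoints.

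For \textbf{uniqueness} in the flat or boundaryless setting with $\rho_0$ absolutely continuous, I would use the Brenier-McCann theorem: the optimal plan is unique and induced by a map $T$, so the constant-speed geodesic $\rho_t=\big((1-t)\Id+tT\big)_\#\rho_0$ (its Riemannian analogue via the exponential map on a boundaryless manifold) is the unique minimizing curve. Since the value identity shows every minimizer of $\J_{\rho_0,\rho_1}$ is such a geodesic with momentum $\mbf_t=\rho_t\vbf_t$ fixed by the geodesic velocity, uniqueness of the geodesic transfers to uniqueness of $(\rho,\mbf)$. The \textbf{main obstacle} is the lower bound ``$\geqslant$'': it rests on the nontrivial equivalence between finite-action solutions of the continuity equation and absolutely continuous Wasserstein curves, and this must be handled with care in the Riemannian geometry with convex boundary, where the no-flux condition $\mbf\cdot\nbf_{\dr X}=0$ and the applicability of \cite[Theorem 8.3.1]{AGS} need justification; this is also why uniqueness is only claimed in the flat or boundaryless case, where the optimal map is genuinely unique.
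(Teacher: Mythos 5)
Your overall strategy is sound and your existence and uniqueness arguments match the paper's in spirit (the paper dispatches existence with the direct method, noting only that a competitor must be exhibited, and reduces uniqueness to uniqueness in the static optimal transport problem via Brenier--McCann). Where you diverge is the value identity: you propose a self-contained two-inequality proof, with displacement interpolation for ``$\leqslant$'' and the metric-derivative characterization of absolutely continuous Wasserstein curves \cite[Theorem 8.3.1]{AGS} for ``$\geqslant$'', whereas the paper simply cites the known Benamou--Brenier equivalences (\cite[Theorem 5.28]{SantambrogioOTAM} for flat convex domains, \cite[Proposition 2.5]{Erbar2010} for boundaryless manifolds). Your route is more instructive but re-proves standard material; the paper's route is shorter but leans on the literature.

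The one point where the paper does genuine work is exactly the point you flag as your ``main obstacle'' and then leave unresolved: the case of a Riemannian manifold \emph{with} (convex) boundary, where neither \cite[Theorem 8.3.1]{AGS} nor the cited Benamou--Brenier results apply directly. The paper closes this by an isometric embedding argument following \cite[Remark 8.3]{Villani2003}: a compact manifold with boundary can be realized as a subset of a boundaryless manifold \cite[Corollary B]{Pigola2016}, which in turn embeds isometrically into Euclidean space by Nash's theorem, and the Benamou--Brenier identity then transfers. Without some such device (or a direct verification that the no-flux condition $\mbf\cdot\nbf_{\dr X}=0$ lets the metric-derivative machinery go through on $X$), your ``$\geqslant$'' inequality is not established in the generality the theorem claims, since the statement is asserted for all $X$ in the paper's framework, not only the flat or boundaryless cases. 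This is the one concrete gap; the rest of your argument would compile into a correct proof.
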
 

\begin{proof}
Existence is really easy from the direct method of calculus if variations. The only non trivial thing to check is the existence of at least one competitor, but this it is a byproduct of the proof of \eqref{equation_J_Wasserstein}.

For the proof of \eqref{equation_J_Wasserstein}, one can refer to \cite[Theorem 5.28]{SantambrogioOTAM} for $X$ being a flat convex domain and \cite[Proposition 2.5]{Erbar2010} when $X$ is a Riemannian manifold without boundary. We have not found the case of a Riemannian manifold \emph{with} boundary written explicitly in the literature, but following \cite[Remark 8.3]{Villani2003} the only requirement is to prove that a manifold with boundary can be isometrically embedded in a Euclidean space. Such a property is true, as a Riemannian manifold with boundary can always be seen as a subset of a Riemannian manifold without boundary \cite[Corollary B]{Pigola2016}, and the latter can be embedded isometrically in a Euclidean space thanks to Nash's theorem.

Eventually, uniqueness of the minimizer comes from the proof of uniqueness for the optimal transport problem: see \cite[Proposition 5.32 and Theorem 1.22]{SantambrogioOTAM} for the flat case and \cite[Corollary 7.23]{Villani2008} combined with \cite[Theorems 8 and 9]{Mccann2001} on a manifold without boundary.   
\end{proof}

The goal of this article is to explain how one can approximate $\J_{\rho_0, \rho_1}$ with functionals defined on finite dimensional spaces.

\subsection{Discretization}

The discretizations already proposed in the literature share some similar structure as they try to mimick the one of the infinite dimensional problem. They split the temporal and spatial variables, and although the temporal discretization is rather straightforward, more diverse propositions have been made for the spatial one. As far as the latter is concerned, what is needed is at least an equivalent of the action, and a divergence operator. We assume that we have some finite dimensional approximations of such objects. Namely, we will have a family of models indexed by a parameter $\sigma$ thought as a spatial step size. The parameter $\sigma$ belongs to a subset $\Sigma$ of $(0, + \infty)$ which contains $0$ as an accumulation point and in the sequel ``for all $\sigma > 0$'' means ``for all $\sigma$ in the the set $\Sigma$''.   

\begin{defi}
\label{definition_approximation}
A family of finite dimensional models of dynamical optimal transport is the datum, for any $\sigma > 0$ of 
\begin{equation*}
( \underbrace{\X_\sigma, \Y_\sigma}_{\text{``spaces''}}, 
\underbrace{A_\sigma}_{\text{``action''}},
\underbrace{\Div_\sigma}_{\text{``derivation''}} )
\end{equation*}
where:
\begin{enumerate}
\item $\X_\sigma$ and $\Y_\sigma$ are finite dimensional vector spaces. 
\item $A_\sigma : \X_\sigma \times \Y_\sigma \to [0, + \infty]$ is a proper convex functional $(-1)$-positively homogeneous in its first variable and $2$-homogeneous in its second variable. We require that $A_\sigma$ is non increasing on a cone $\X_{\sigma,+} \subset \X_\sigma$ in the sense that for every $P_1, P_2 \in \X_{\sigma,+}$ and any $\Mbf \in \Y_\sigma$, 
\begin{equation*}
A_\sigma(P_1+ P_2, \Mbf) \leqslant A_\sigma(P_1, \Mbf).
\end{equation*} 
\item $\Div_\sigma : \Y_\sigma \to \X_\sigma$ is a linear operator.
\end{enumerate} 
\end{defi}

\noindent \review{We will think of $\X_\sigma$ as an approximation of $\M(X)$, while $\Y_\sigma$ is an approximation of the space $\M(TX)$ \emph{with no-flux boundary conditions}. The function $A_\sigma$ mimics the action $A$ and $\Div_\sigma$ corresponds the divergence operator.}
The space $\X_{\sigma,+}$ is thought as $\M_+(X)$. We emphasize that we do not require any duality structure on $\X_\sigma$ and $\Y_\sigma$, nor any norm on them. For the moment, we have made no explicit link between these objects and the original ones living on the manifold $X$. As we expect the approximation to be finer and finer as $\sigma \to 0$, the dimension of $\X_\sigma$ and $\Y_\sigma$ are thought as increasing as $\sigma$ decreases. 

By convention, we will use capital letters to denote the counterpart in the spaces $\X_\sigma, \Y_\sigma$ of elements in $\M(X)$ and $\M(TX)$. Hence a generic element of $\X_\sigma$ is $P$ (a capitalized $\rho$), and a generic element in $\Y_\sigma$ is $\Mbf$.   

Next, we take $N+1 \geqslant 2$ the number of spatial time steps, and we use $\tau = 1/N$ to denote the temporal step size.

\begin{defi}
Let $N+1 \geqslant 2$ be given and let us define $\tau=1/N$. Moreover, let $(\X_\sigma, \Y_\sigma, A_\sigma, \Div_\sigma)_{\sigma}$ a family of finite dimensional models of dynamical optimal transport. 

If $\sigma > 0$ is given, and $\bar{P}_0$ and $\bar{P}_1$ are elements in $\X_{\sigma,+}$, we define the functional $\J^{N, \sigma}_{\bar{P}_0, \bar{P}_1}$ on $(\X_\sigma)^{N+1} \times (\Y_\sigma)^{N}$ by
\begin{equation}
\label{equation_discrete_cost}
\J^{N, \sigma}_{\bar{P}_0, \bar{P}_1}( (P_k)_{0 \leqslant k \leqslant N}, (\Mbf_k)_{1 \leqslant k \leqslant N}  ) = \tau \sum_{k=1}^{N} A_\sigma \left( \frac{P_{k-1} + P_{k}}{2}, \Mbf_k \right)
\end{equation}
if $(P_k)_{0 \leqslant k \leqslant N} \in (\X_{\sigma,+})^{N+1}$ and the discrete continuity equation is satisfied, that is
\begin{equation*}
\begin{cases}
\tau^{-1} (P_{k} - P_{k-1})  + \Div_\sigma (\Mbf_k) = 0, & \forall k \in \{1,2, \ldots, N \} \\
P_{0} = \bar{P}_0, & \\
P_{N} = \bar{P}_1, & \\
\end{cases}
\end{equation*}
and $+ \infty$ otherwise. 
\end{defi}

\noindent \review{As already mentioned below Definition \ref{definition_approximation}, $\Y_\sigma$ will be an approximation of $\M(TX)$ with no-flux boundary conditions, hence the apparent absence of spatial boundary conditions in the definition of $\J^{N, \sigma}_{\bar{P}_0, \bar{P}_1}$. Notice that we enforce the condition $P_k \in \X_{\sigma,+}$ for every $k \in \{ 0,1, \ldots, N \}.$}

\begin{rmk}
\label{rmk_averaging_needed}
The function $\J^{N, \sigma}_{\bar{P}_0, \bar{P}_1}$ is mimicking the continuous one $\J_{\rho_0, \rho_1}$. Actually what we are doing, given the continuity equation, is piecewise affine interpolation in time for the densities, and piecewise constant interpolation in time for the momentum. On the other hand, for the cost functional \eqref{equation_discrete_cost}, we transform the piecewise affine interpolation of the density into a piecewise constant one by averaging in time before computing the action. This averaging is necessary because we want to avoid at any cost putting momentum where there is no mass. To understand the issue, let us go back for a moment to the continuous case and let us focus on a single time step: assume that $(\rho, \mbf) \in \M([0,1] \times X) \times \M([0,1] \times TX)$ are respectively affine and constant w.r.t. the time variable. That is, $\rho(t,\cdot) = (1-t) \rho(0,\cdot) + t \rho(1, \cdot)$ and $\mbf(t, \cdot) = \mbf(\cdot)$. To simplify the exposition, assume that they have densities w.r.t. the volume measure. Then
\begin{equation*}
\A(\rho, \mbf) = \iint_{[0,1] \times X} \frac{|\mbf(x)|^2}{2 \rho(t,x)} \, \ddr t \, \ddr x =  \int_X \frac{|\mbf(x)|^2}{2} \underbrace{\left(\int_0^1 \frac{\ddr t}{ (1-t) \rho(0,x) + t \rho(1,x) } \right)}_{= + \infty \text{ if  } \rho(0,x) \text{ or }\rho(1,x) = 0} \, \ddr x.
\end{equation*}
Hence $\A(\rho, \mbf) = + \infty$ if $|\mbf(x)|^2 > 0$ and \emph{either} $\rho(0,x)$ or $\rho(1,x)$ vanishes for $x$ belonging to a set of positive measure. That is, $\mbf$ must have a density w.r.t. both $\rho(0, \cdot)$ and $\rho(1, \cdot)$ for the action to be finite. Such a condition is very restrictive (think that $\rho(0, \cdot)$ could be a Dirac mass), and things are much better with our choice of discretization where $\mbf$ must have a density w.r.t. $\rho(0, \cdot) + \rho(1, \cdot)$. We refer the reader to the proof of Proposition \ref{prop_controllability} where the particular choice of averaging in time for the densities is crucial. 

\review{We do not claim that this choice of staggered grids in time for the density and the momentum is only possible one, but we want to underline that the averaging problem described in this remark has to be taken care of for all temporal discretizations.}
\end{rmk}

Minimizing the function $\J^{N, \sigma}_{\bar{P}_0, \bar{P}_1}$ is a finite dimensional convex optimization problem. As $\J^{N, \sigma}_{\bar{P}_0, \bar{P}_1} \geqslant 0$, it always has a solution if admissible competitors exist. A consequent effort has been devoted to solve it efficiently as recalled earlier. We emphasize that for the actual computation of a minimizer of $\J^{N, \sigma}_{\bar{P}_0, \bar{P}_1}$, one can (and very often does) introduce an additional duality structure on $\X_\sigma$ and $\Y_\sigma$.

\bigskip

Before stating the assumptions sufficient to make the link between these finite dimensional problems and the infinite dimensional one, we need to introduce some additional vocabulary. 

First, in the sequel we will require some estimates to hold uniformly if some test functions are uniformly regular. Hence we need to define some way to measure regularity of test functions on the manifold $X$. To that extent, we use the following definition. 

\begin{defi}
Let $(U^\alpha, \varphi^\alpha)_{\alpha}$ be a \emph{finite} atlas of $X$, where each open set $U^\alpha \subset X$ is mapped by $\varphi_\alpha$ on a open set of either a Euclidean space or a half Euclidean space. 

For $q \geqslant 1$ integer, we say that $\Bc \subset C(X)$ is a \emph{bounded} set of $C^q(X)$ if
\begin{equation*}
\sup_\alpha \sup_{f \in \Bc} \| (f \circ (\varphi^\alpha)^{-1}) \|_{C^q( \varphi^\alpha(U^\alpha)  )} < + \infty 
\end{equation*} 
Similarly, we say that $\Bc \subset C(X, TX)$ is a \emph{bounded} set of $C^q(X, TX)$ if
\begin{equation*}
\sup_\alpha \sup_{f \in \Bc} \| D \varphi^\alpha (f \circ (\varphi^\alpha)^{-1}) \|_{C^q( \varphi^\alpha (U^\alpha)  )} < + \infty  
\end{equation*}
\end{defi}

\noindent In the definition above, the $C^q$ norm of a function defined on a Euclidean space is just the supremum norm of the function and all its partial derivatives up to order $q$. As $X$ is compact and smooth, these definitions do not depend on the choice of the atlas. 

\review{We} will also need to introduce a duality structure on $\Y_\sigma$. Indeed, at least when studying the discretization of \cite{Gladbach2018}, we found this duality structure necessary. We emphasize, however, that to \emph{define} the finite dimensional functional $\J^{N, \sigma}$ this duality structure was not needed, nor in the \emph{consequences} of Theorem \ref{theo_main} below. 

Specifically, let $\Y'_\sigma$ be the dual of $\Y_\sigma$, that is the set of linear form on $\Y_\sigma$. If $L : \Y_\sigma \to \M(TX)$, we can define its \emph{adjoint} operator $L^\top : C(X,TX) \to \Y'_\sigma$ by the following identity: for any $\bbf \in C(X,TX)$ and $\Mbf \in \Y_\sigma$,
\begin{equation*}
\langle \Mbf, L^\top(\bbf)  \rangle_{\Y_\sigma \times \Y'_\sigma} = \langle L(\Mbf), \bbf \rangle_{ \M(TX) \times C(X,TX) }. 
\end{equation*}   
As $\Y_\sigma$ is finite dimensional, $L^\top(\bbf)$ is well-defined for any $\bbf \in C(X, TX)$. Moreover, for a fixed $\sigma > 0$ we define $A^\star_\sigma : \X_\sigma \times \Y'_\sigma \to [0, + \infty)$ to be the Legendre transform of $A_\sigma$ w.r.t. its second variable: if $\Bbf \in \Y'_\sigma$ and $P \in \X_\sigma$ then 
\begin{equation*}
A^\star_\sigma(P, \Bbf) = \sup_{\Mbf \in \Y_\sigma} \left( \langle \Mbf, \Bbf  \rangle_{\Y_\sigma \times \Y'_\sigma} - A_\sigma(P, \Mbf) \right). 
\end{equation*}
As $\Y_\sigma$ is a finite-dimensional and $A_\sigma$ \review{is assumed to be proper and convex (see Definition \ref{definition_approximation})}, the following identity holds \cite[Theorem 12.2]{Rockafellar1970}: for any $P \in \X_{\sigma,+}$ and $\Mbf \in \Y_\sigma$, 
\begin{equation*}
A_\sigma(P, \Mbf) = \sup_{\Bbf \in \Y'_\sigma} \left( \langle \Mbf, \Bbf  \rangle_{\Y_\sigma \times \Y'_\sigma} - A^\star_\sigma(P, \Bbf) \right). 
\end{equation*}
Moreover, one can easily check that $A^\star_\sigma$ is also $2$-homogeneous in its second variable. In particular, a scaling argument leads to, for any $P \in \X_\sigma$, $\Mbf \in \Y_\sigma$ and $\Bbf \in \Y'_\sigma$, 
\begin{equation}
\label{equation_control_A_Astar}
\langle \Mbf, \Bbf  \rangle_{\Y_\sigma \times \Y'_\sigma} \leqslant \review{2} \sqrt{A_\sigma(P, \Mbf)} \sqrt{A^\star_\sigma(P, \Bbf)}.
\end{equation}

Now we can state the assumption necessary to make the link between this finite dimensional problems and the infinite dimensional one. It amounts to give ourselves reconstruction and sampling operators sending $\X_\sigma$ onto $\M(X)$, $\Y_\sigma$ onto $\M(TX)$ and \emph{vice versa}, and assume some sort of \emph{commutation} relations between them.

\begin{defi}
\label{definition_adapated}
Let $(\X_\sigma, \Y_\sigma, A_\sigma, \Div_\sigma)_{\sigma}$ be a family of finite dimensional models of dynamical optimal transport. Such a family is said \emph{adapted} to the manifold $X$ if there exists, for any $\sigma > 0$, ``reconstruction operators'' $R^{\CE}_{\X_\sigma}, R^A_{\X_\sigma}, R_{\Y_\sigma}$ and ``sampling operators'' $S_{\X_\sigma}, S_{\Y_\sigma}$ where: 
\begin{enumerate}
\item $R^{\CE}_{\X_\sigma}, R^A_{\X_\sigma}  : \X_\sigma \to \M(X)$ and $R_{\Y_\sigma} : \Y_\sigma \to \M(TX)$ are linear operators. They are defined everywhere on $\X_\sigma$ and $\Y_\sigma$. Moreover, both $R^{\CE}_{\X_\sigma}$ and $R^{A}_{\X_\sigma}$ map $\X_{\sigma,+}$ into $\M_+(X)$.    
\item $S_{\X_\sigma} : \M(X) \to \X_\sigma$ and $S_{\Y_\sigma} : \M(TX) \to \Y_\sigma$ are linear operators. We require $S_{\X_\sigma}$ to be defined everywhere on $\M(X)$ and to map $\M_+(X)$ into $\X_{\sigma,+}$, while $S_{\Y_\sigma}$ can have a domain $\D(S_{\Y_\sigma})$ different from $\M(TX)$ which must at least contains measures which have a continuous densities (hence is dense in $\M(TX)$).
\end{enumerate}

\noindent Moreover, these operators must satisfy the following properties.

\begin{enumerate}[label=\textnormal{(A\arabic*)}]

\item \label{asmp_interp_sampling} \emph{(Reconstruction is asymptotically the inverse of sampling)} For any $\rho \in \M_+(X)$, there holds 
\begin{equation*}
\lim_{\sigma \to 0} (R^{\CE}_{\X_\sigma} \circ S_{\X_\sigma})(\rho) = \rho
\end{equation*} 
weakly in $\M(X)$.

\item \label{asmp_interps} \emph{(Reconstructions of the density are asymptotically equivalent)} Let $\Bc$ be a bounded set of $C^1(X)$. There exists $(\varepsilon_\sigma)_{\sigma > 0}$ tending to $0$ as $\sigma \to 0$ such that, for any $\sigma > 0$, any $\phi \in \Bc$ and any $P \in \X_\sigma$, 
\begin{equation*}
\langle ( R^{\CE}_{\X_\sigma} - R^A_{\X_\sigma} )(P), \phi  \rangle \leqslant  \varepsilon_\sigma  \|R^{\CE}_{\X_\sigma} (P) \|.
\end{equation*}


\item \label{asmp_interp_der} \emph{(Asymptotic commutation of reconstruction and derivation)} Let $\Bc$ be a bounded set of $C^2(X)$. There exists $(\varepsilon_\sigma)_{\sigma > 0}$ tending to $0$ as $\sigma \to 0$ such that, for any $\Mbf \in \Y_\sigma$ and any $\phi \in \Bc$,
\begin{equation*}
\left| \langle (R^{\CE}_{\X_\sigma} \circ \Div_\sigma) (\Mbf_\sigma), \phi \rangle + \langle R_{\Y_\sigma} (\Mbf_\sigma), \nabla \phi \rangle \right| \leqslant \varepsilon_\sigma   \| R_{\Y_\sigma} (\Mbf )\|.  
\end{equation*}

\item \label{asmp_samp_der} \emph{(Exact commutation of sampling and derivation)} For any $\mbf \in \M(TX)$ which has a density (still denoted by $\mbf$) w.r.t. volume measure in $C^1(X,TX)$ satisfying \review{the no-flux boundary conditions $\mbf(x) \cdot \nbf_{\dr X}(x) = 0$} for all $x \in \dr X$, then for any $\sigma > 0$
\begin{equation*}
S_{\X_\sigma} (\nabla \cdot \mbf) =  (\Div_\sigma \circ S_{\Y_\sigma})(\mbf).
\end{equation*}

\item \label{asmp_interp_action} \emph{(One-sided estimates for the Legendre transform of the action)} Let $\Bc$ be a bounded set of $C^1(X,TX)$. There exists $(\varepsilon_\sigma)_{\sigma > 0}$ tending to $0$ as $\sigma \to 0$ such that the following holds. If $\bbf \in \Bc$ then for any $\sigma > 0$ and any $P \in \X_\sigma$, 
\begin{equation*}
A_\sigma^\star( P, R_{\Y_\sigma}^\top (\bbf) ) \leqslant  A^\star( R^A_{\X_\sigma} (P), \bbf  ) + \varepsilon_\sigma \| R^A_{\X_\sigma}(P) \|. 
\end{equation*} 
Moreover, there exists $C \geqslant 1$ such that for any $\bbf \in C(X,TX)$, there holds
\begin{equation*}
A_\sigma^\star( P, R_{\Y_\sigma}^\top (\bbf) ) \leqslant \frac{C}{2} \| R^A_{\X_\sigma} (P)   \| \left( \sup_{x \in X} |\bbf(x)|_x \right)^2.
\end{equation*}

\item \label{asmp_samp_actio} \emph{(Asymptotic one-sided estimate for the action on smooth densities)} Let $\Bc$ a bounded set of $C^1(X)$ such that all functions in $\Bc$ are uniformly bounded from below by a positive constant. Let $\Bc'$ a bounded subset of $C^1(X,TX)$. Then there exists $(\varepsilon_\sigma)_{\sigma > 0}$ tending to $0$ as $\sigma \to 0$ such that, if $\rho \in \M(X)$ has a density in $\Bc$ and $\mbf \in \M(TX)$ has a density in $\Bc'$ then for any $\sigma > 0$ 
\begin{equation*}
A_\sigma( S_{\X_\sigma} (\rho), S_{\Y_\sigma}(\mbf) )  \leqslant A(\rho, \mbf) + \varepsilon_\sigma. 
\end{equation*}

\item \label{asmp_controllability} \emph{(Controllability)} There exists $(\varepsilon_\sigma)_{\sigma > 0}$ which tends to $0$ as $\sigma \to 0$ and $\omega$ is a continuous functions satisfying $\omega(0) = 0$, such that the following holds. If $x, y$ are points in $X$, then for any $\sigma > 0$ there exists $\hat{P} \in \X^+_\sigma$ and two elements $\hat{\Mbf}_1, \hat{\Mbf}_2 \in \Y_\sigma$ such that 
\begin{equation}
\label{equation_controllability}
\begin{cases}
\Div_\sigma (\hat{\Mbf}_1) = \hat{P} - S_{\X_\sigma} (\delta_x) \\
\Div_\sigma (\Mbf_2) = \hat{P} - S_{\X_\sigma} (\delta_y)
\end{cases} \ \ \text{ and } \ \forall i \in \{ 1,2 \}, \ \ A_\sigma(\hat{P}, \hat{\Mbf}_i) \leqslant \omega(d_g(x,y)) + \varepsilon_\sigma.
\end{equation}

\end{enumerate}
\end{defi}

\noindent Notice that we give ourselves $R^{\CE}_{\X_\sigma}$ and $R^A_{\X_\sigma}$ two different reconstructions operators for the density. Indeed, even if they are assumed to be asymptotically equivalent, each of these operators will be used to pass to the limit different terms in the objective functional (namely the continuity equation with $R^{\CE}_{\X_\sigma}$ and the action with $R^A_{\X_\sigma}$). 

This list of conditions can seem quite long, the reader can take a look at the examples in Section \ref{section_examples} to see how one can check them in practice. Definition \ref{definition_adapated} is only concerned with the spatial discretization, but with its help we will be able to say something about the limit of the functional $\J^{N, \sigma}$ defined for space-time dependent objects. 

Except for \ref{asmp_controllability} which is discussed in Remark \ref{rmk_controllability} below, all requirements are some sort of \emph{commutation} relations. Except for \ref{asmp_samp_der}, all allow for some leeway, that is the presence of an error term. Moreover, although some differential operators are involved in \ref{asmp_interp_der} and \ref{asmp_samp_der}, the remaining properties only deal with $0$-th order quantities: importantly the assumptions \ref{asmp_interp_action} and \ref{asmp_samp_actio} involving the action are only one-sided estimates. 

\begin{rmk}
In practice, one chooses the sampling operators in order for \ref{asmp_samp_der} to be satisfied, namely exact commutation between sampling and derivation. For the reconstruction operators, $R^{A}_{\X_\sigma}$ and $R_{\Y_\sigma}$ are chosen such that the one-sided estimation on the action \ref{asmp_interp_action} holds. Once this is done, one can choose $R^{\CE}_{\X_\sigma}$ to get \ref{asmp_interp_der} asymptotic commutation between reconstruction and derivation. Then, properties \ref{asmp_interp_sampling}, \ref{asmp_interps} and \ref{asmp_samp_actio} are simple \emph{consistency} properties to check, involving no derivatives. 
\end{rmk}

\begin{figure}
\begin{center}
\begin{tikzpicture}[scale = 0.3]

\draw (-4,5) node[left]{Density} ;
\draw (0,-7) node[left]{Momentum} ;

\draw [line width = 1pt] (0,0)  rectangle (10, 10)  ;
\draw [line width = 1pt] (12,0)  rectangle (22, 10)  ;
\draw [line width = 1pt] (24,0)  rectangle (34, 10)  ;

\draw[dashed] (-5,-1) -- (39,-1) ;

\draw [line width = 1pt] (6,-2)  rectangle (17, -12)  ;
\draw [line width = 1pt] (18,-2)  rectangle (28, -12)  ;

\fill [color = black] (2,2)  circle (0.3)  ;
\draw (5,10.5) node[above]{$\delta_x$} ;

\fill [color = black] (32,8)  circle (0.3)  ;
\draw (29,10.5) node[above]{$\delta_y$} ;

\draw [line width = 1pt] (14,2) -- (20,8)  ;
\draw (17,10.5) node[above]{$\hat{\rho}$} ;

\shade[top color = white, bottom color = black] (7.85,-9.9) -- (14.85,-3.9) -- (15,-4.1) -- (8,-10.1) -- cycle  ;
\draw (11,-12.5) node[below]{$|\hat{\mbf}_1|$} ;

\shade[top color = black, bottom color = white] (19.85,-9.9) -- (25.85,-3.9) -- (26,-4.1) -- (20,-10.1) -- cycle  ;
\draw (23,-12.5) node[below]{$|\hat{\mbf}_2|$} ;

\end{tikzpicture}
\end{center}
\caption{Schematic representation of the objects constructed in Remark \ref{rmk_controllability}. Given two Dirac masses $\delta_x$ and $\delta_y$, one constructs an intermediate measure $\hat{\rho}$ and two momenta $\hat{\mbf}_1, \hat{\mbf_2}$ so that $\nabla \cdot \hat{\mbf}_1 = \hat{\rho} - \delta_x$ and $\nabla \cdot \hat{\mbf}_2 = \hat{\rho} - \delta_y$ while the actions $A(\hat{\rho}, \hat{\mbf}_1)$ and $A(\hat{\rho}, \hat{\mbf}_2)$ are controlled by the (Riemannian) distance between $x$ and $y$.}
\label{figure_controllablity}
\end{figure}
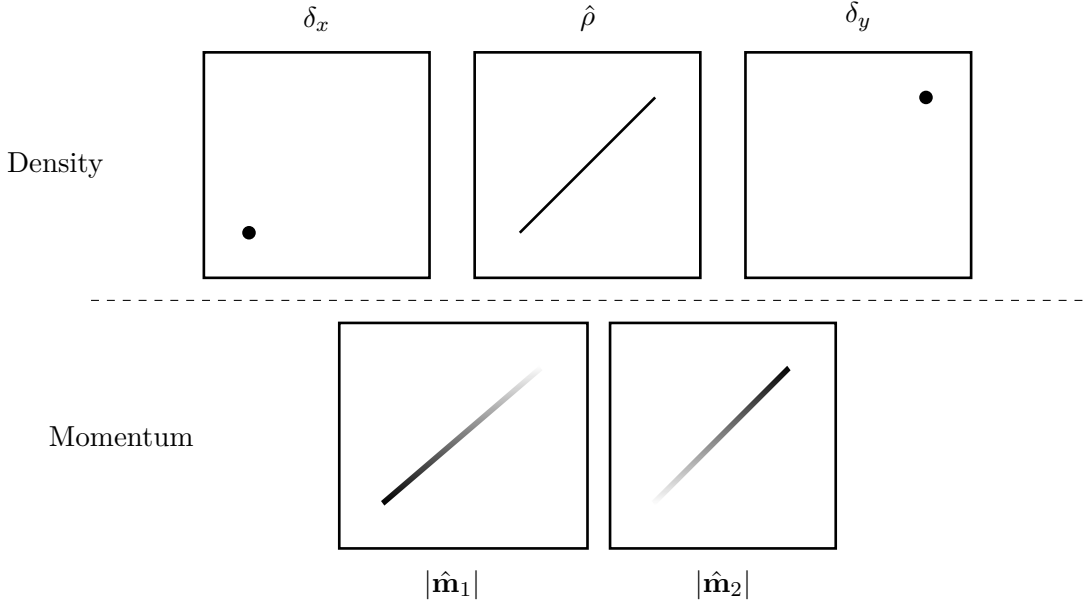

\begin{rmk}
\label{rmk_controllability}
The less standard assumption seems to be \ref{asmp_controllability}. It is used to show how one can control the minimal value of $\J^{N, \sigma}_{S_{\X_\sigma}(\rho_0), S_{\X_\sigma}(\rho_1)}$ in terms of the Wasserstein distance between $\rho_0$ and $\rho_1$ even if $N$ is very small, see Proposition \ref{prop_controllability}. Checking such a condition relies on the particular discretization that is chosen, usually it consists in mimicking the proof of \ref{asmp_controllability} if there is no discretization, that is if $\X_\sigma = \M(X)$ and $\Y_\sigma = \M(TX)$. Indeed, if $x, y \in X$ and $\gamma : [0,1] \to X$ is a constant-speed geodesic joining these two points, let us define $\hat{\rho} \in \M_+(X)$ and $\hat{\mbf}_1, \hat{\mbf}_2 \in \M(TX)$ by, for $\phi \in C(X)$ and $\bbf \in C(X,TX)$,
\begin{multline}
\label{equation_controllability_continuous}
\langle \hat{\rho}, \phi \rangle = \int_0^1 \phi(\gamma(t)) ~ \ddr t, \ \ 
\langle \hat{\mbf}_1, \bbf \rangle =  - \int_0^1 (1-t) ~ g_{\gamma(t)}(\bbf(\gamma(t)), \dot{\gamma}(t)) ~ \ddr t \\
\text{ and } 
\langle \hat{\mbf}_2, \bbf \rangle = \int_0^1 t ~ g_{\gamma(t)} (\bbf(\gamma(t)) , \dot{\gamma}(t)) ~ \ddr t.
\end{multline}
That is, $\hat{\rho}$ is the $\mathcal{H}^1$ Haussdorff measure restricted to $\gamma([0,1])$ (with correct scaling factor to make it a probability measure), while $\hat{\mbf}_1$ and $\hat{\mbf}_2$ have a $L^\infty$ density w.r.t. $\hat{\rho}$ (see Figure \ref{figure_controllablity} for a representation of this construction). One can check that $\nabla \cdot \hat{\mbf}_1 = \hat{\rho} - \delta_x$ while $\nabla \cdot \hat{\mbf}_2 = \hat{\rho} - \delta_y$. Moreover, the density of $\hat{\mbf}_1$ w.r.t. $\hat{\rho}$ at the point $\gamma(t)$ is given by $(1-t) \dot{\gamma}(t)$, whose norm is bounded by $|\dot{\gamma}(t)|_{\gamma(t)} = d_g(x,y)$ the Riemannian distance between $x$ and $y$. Hence we get $A(\hat{\rho}, \hat{\mbf}_1) \leqslant d_g(x,y)^2/2$, and similarly for $A( \hat{\rho}, \hat{\mbf}_2)$. In practice to prove \ref{asmp_controllability} one chooses $\hat{P}$ and $\hat{\Mbf}_i$ that are mimicking $\hat{\rho}$ and $\hat{\mbf}_i$. We cannot use directly the sampling operators $S_{\X_\sigma}$ and $S_{\Y_\sigma}$ as the control that we get on the action thanks \ref{asmp_samp_actio} is only valid for measures with smooth densities, while $\hat{\rho}$ and $\hat{\mbf}_i$ are far from such regularity.
\end{rmk}

\begin{rmk}
\review{\emph{A priori}, the reconstruction and sampling operators for the density do not preserve mass, though they preserve positivity, this is precisely what the space $\X_{\sigma,+}$ is about. On a particular discretization (like the two presented in Section \ref{section_examples}), one can check that indeed mass is preserved by reconstruction and sampling, but this is not needed for the convergence result. Nevertheless, as detailed in the first step of the proof of Theorem \ref{theo_GammaLiminf}, if one reconstructs densities from solutions of the discrete continuity equation, the mass is asymptotically constant in time in the limit $N \to + \infty, \sigma \to 0$. Actually, to get exact mass preservation for solutions of the discrete continuity equation, it is enough to ask that \ref{asmp_interp_der} holds without an error term ($\varepsilon_\sigma = 0$) if $\phi$ is constant.} 
\review{Moreover, the proof of Theorem \ref{theo_GammaLimsup} (see below) shows that if the continuous problem is not empty, then the discretized one isn't either. A close reading of the proof tells us that it relies crucially on Assumptions \ref{asmp_samp_der} and \ref{asmp_controllability}. }  
\end{rmk}

\review{
\begin{rmk}
As mentioned above, the condition of no-flux boundary conditions is built in the choice of the space $\Y_\sigma$ as one can check in the examples of Section \ref{section_examples}. In Definition \ref{definition_adapated} this is only apparent in \ref{asmp_samp_der}, as in the sequel we only use the sampling operator $S_{\Y_\sigma}$ to $\mbf \in \M(TX)$ which are smooth and have no-flux boundary conditions. Actually, one can even define $S_{\Y_\sigma}(\mbf)$ to be zero if $\mbf$ does \emph{not} satisfy no-flux boundary conditions: this does not change the validity of \ref{asmp_samp_der} and \ref{asmp_samp_actio}. 
\end{rmk}}

\begin{rmk}
\review{As we are discretizing a non linear minimization problem with linear constraints, readers familiar with the theory of finite elements can wonder if any kind of $\inf-\sup$ condition is relevant here. As a disclaimer, we are not familiar enough with this condition to give a satisfying answer. We can only say that the present article was written without referring to this $\inf-\sup$ condition, and that the dynamical optimal transport problem as we tackle it is not phrased in the setting of Hilbert spaces, but in the Banach spaces of continuous functions and measures.} 

\review{As far we understand it, the discretizations in Section \ref{section_examples} do not satisfy an $\inf-\sup$ condition. Nevertheless, let us mention that the authors of the recently proposed discretization \cite{NataleTodeschi2020} prove that it fits in our framework and that a discrete $\inf-\sup$ condition is satisfied in their case.} 
\end{rmk}

As said earlier, the introduction of a duality structure in \ref{asmp_interp_action} is not entirely natural. Let us state an other assumption which implies \ref{asmp_interp_action} and which involves only the action and not its Legendre transform. 

\begin{lm}
\label{lemma_interp_action}
Assume that the following property holds: 
\begin{enumerate}[label=\textnormal{(A'\arabic*)}]

\setcounter{enumi}{4}

\item \label{asmp_interp_action_stronger} \emph{(One-sided estimate for the action)} There exists $(\varepsilon_\sigma)_{\sigma}$ tending to $0$ as $\sigma \to 0$ such that, for any $\sigma > 0$ and any $(P, \Mbf) \in \X_\sigma \times \Y_\sigma$, 
\begin{equation*}
A(R^A_{\X_\sigma} (P), R_{\Y_\sigma} (\Mbf)) \leqslant (1 + \varepsilon_\sigma) A_\sigma(P, \Mbf).
\end{equation*}

\end{enumerate}

\noindent Then condition \ref{asmp_interp_action} is satisfied. 

\end{lm}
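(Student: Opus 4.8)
The plan is to unfold the definition of the discrete Legendre transform $A^\star_\sigma$, insert the hypothesis \ref{asmp_interp_action_stronger}, and then reduce everything to a single (scaled) Fenchel--Young inequality between $A$ and $A^\star$ in the continuous setting. Fix $\bbf \in C(X,TX)$, $\sigma > 0$ and $P \in \X_{\sigma,+}$, and write $\rho := R^A_{\X_\sigma}(P) \in \M_+(X)$. By the definitions of $A^\star_\sigma$ and of the adjoint $R_{\Y_\sigma}^\top$,
\[
A^\star_\sigma(P, R_{\Y_\sigma}^\top(\bbf)) = \sup_{\Mbf \in \Y_\sigma}\left( \langle R_{\Y_\sigma}(\Mbf), \bbf \rangle - A_\sigma(P, \Mbf) \right),
\]
so that, bounding $A_\sigma(P,\Mbf) \geqslant (1+\varepsilon_\sigma)^{-1} A(\rho, R_{\Y_\sigma}(\Mbf))$ through \ref{asmp_interp_action_stronger}, each term of this supremum is dominated by $\langle \mbf, \bbf \rangle - (1+\varepsilon_\sigma)^{-1} A(\rho, \mbf)$ with $\mbf := R_{\Y_\sigma}(\Mbf) \in \M(TX)$.

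The crux is an elementary scaled Fenchel--Young inequality: for $\rho \in \M_+(X)$, any $\mbf \in \M(TX)$, any $\bbf$ and any $c > 0$,
\[
\langle \mbf, \bbf \rangle \leqslant \frac{1}{c} A(\rho, \mbf) + c\, A^\star(\rho, \bbf).
\]
I would derive it directly from the identity $A(\rho, \mbf) = \sup_{\bbf'}(\langle \mbf, \bbf' \rangle - A^\star(\rho, \bbf'))$ (valid precisely because $\rho \in \M_+(X)$) applied to $\bbf' = c\bbf$, together with the $2$-homogeneity of $A^\star(\rho, \cdot)$ in its second variable, which replaces $A^\star(\rho, c\bbf)$ by $c^2 A^\star(\rho, \bbf)$. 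Choosing $c = 1 + \varepsilon_\sigma$ yields $\langle \mbf, \bbf \rangle - (1+\varepsilon_\sigma)^{-1} A(\rho, \mbf) \leqslant (1+\varepsilon_\sigma) A^\star(\rho, \bbf)$, a bound uniform in $\Mbf$. Taking the supremum over $\Mbf$ gives the master estimate
\[
A^\star_\sigma(P, R_{\Y_\sigma}^\top(\bbf)) \leqslant (1+\varepsilon_\sigma) A^\star(\rho, \bbf) = \frac{1+\varepsilon_\sigma}{2} \langle \rho, |\bbf|^2 \rangle.
\]

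From this one inequality both conclusions of \ref{asmp_interp_action} follow by splitting off the factor. For the first, given a bounded set $\Bc$ of $C^1(X,TX)$, set $M := \sup_{\bbf \in \Bc}\sup_{x} |\bbf(x)|_x < + \infty$, write $(1+\varepsilon_\sigma)A^\star(\rho,\bbf) = A^\star(\rho,\bbf) + \varepsilon_\sigma A^\star(\rho,\bbf)$, and bound the error by $\varepsilon_\sigma A^\star(\rho,\bbf) \leqslant \tfrac{1}{2} M^2 \varepsilon_\sigma \|\rho\|$, which defines an admissible vanishing sequence (recall $\|\rho\| = \langle \rho, 1 \rangle$ since $\rho \in \M_+(X)$). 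For the second, valid for all $\bbf \in C(X,TX)$, use $A^\star(\rho,\bbf) \leqslant \tfrac{1}{2}\|\rho\| (\sup_x |\bbf(x)|_x)^2$ and take $C := 1 + \sup_\sigma \varepsilon_\sigma$, which is finite (as $\varepsilon_\sigma \to 0$) and $\geqslant 1$.

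The only genuine subtlety, and the step I would treat most carefully, is the positivity requirement $\rho \in \M_+(X)$, needed because the scaled Fenchel--Young inequality rests on the formula for $A(\rho,\cdot)$ as a supremum, which is only guaranteed for positive $\rho$, i.e. for $P \in \X_{\sigma,+}$. To cover the stated generality $P \in \X_\sigma$, I would argue by dichotomy: either the slice $A_\sigma(P,\cdot)$ is finite somewhere, in which case $2$-homogeneity forces $A_\sigma(P,0) = 0$ and \ref{asmp_interp_action_stronger} at $\Mbf = 0$ gives $A(R^A_{\X_\sigma}(P), 0) \leqslant 0$, hence $R^A_{\X_\sigma}(P) \in \M_+(X)$ and the argument above applies verbatim; or $A_\sigma(P,\cdot) \equiv + \infty$, in which case $A^\star_\sigma(P, \cdot) \equiv -\infty$ and both estimates hold trivially.
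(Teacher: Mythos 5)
Your proof is correct and follows essentially the same route as the paper: both reduce the claim to the single estimate $A^\star_\sigma(P, R_{\Y_\sigma}^\top(\bbf)) \leqslant (1+\varepsilon_\sigma) A^\star(R^A_{\X_\sigma}(P), \bbf)$ by combining \ref{asmp_interp_action_stronger} with a rescaling that exploits the $2$-homogeneity of the action in its second variable (the paper rescales the momentum and uses the Legendre-transform definition of $A^\star$; you rescale the test field $\bbf$, which is the same computation written as a scaled Fenchel--Young inequality). Your explicit dichotomy for $P \notin \X_{\sigma,+}$ is a welcome extra precaution that the paper's write-up glosses over, and it is handled correctly.
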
 

\noindent The reason we introduced \ref{asmp_interp_action} instead of \ref{asmp_interp_action_stronger} is that, for the discretization proposed in \cite{Gladbach2018}, only \ref{asmp_interp_action} seems to be provable.

\begin{proof}
Let $\bbf \in C(X,TX)$ and $P \in \X_\sigma$. We know that there exists $\Mbf \in \Y_\sigma$ such that $A^\star_\sigma(P, R_{\Y_\sigma}^\top (\bbf)) = \langle R_{\Y_\sigma}^\top (\bbf), \Mbf \rangle - A_\sigma(P, \Mbf)$. Using the assumption \ref{asmp_interp_action_stronger}, we deduce that 
\begin{multline*}
A^\star_\sigma(P, R_{\Y_\sigma}^\top (\bbf)) = \langle R_{\Y_\sigma}^\top (\bbf), \Mbf \rangle - A_\sigma(P, \Mbf) 
\leqslant \langle R_{\Y_\sigma} (\Mbf) , \bbf \rangle - \frac{1}{1+ \varepsilon_\sigma} A( R^A_{\X_\sigma} (P), R_{\Y_\sigma} (\Mbf)  )  \\
= (1+ \varepsilon_\sigma) \left( \langle \tilde{\mbf}, \bbf \rangle -  A( R^A_{\X_\sigma} (P),\tilde{\mbf} ) \right)
\end{multline*} 
with $\tilde{\mbf} = (1+ \varepsilon_\sigma)^{-1} R_{\Y_\sigma} (\Mbf) $. Using the very definition of the Legendre transform, we see that 
\begin{equation*}
A^\star_\sigma(P, R_{\Y_\sigma}^\top (\bbf)) \leqslant (1+ \varepsilon_\sigma) A^\star ( R^A_{\X_\sigma} (P), \bbf ).
\end{equation*}
Now if $\bbf$ belongs to $\Bc$ a bounded set of $C^1(X,TX)$, we can easily bound the error term $\varepsilon_\sigma A^\star ( R^A_{\X_\sigma} (P), \bbf )$ by $C \varepsilon_\sigma \| R^A_{\X_\sigma}(P) \|$ where $C$ depends only on $\Bc$. Getting the second estimate is easy as in the formula above $\bbf$ can be any continuous function. We then simply use the estimate for the continuous action:
\begin{equation*}
A^\star ( R^A_{\X_\sigma} (P), \bbf ) = \frac{1}{2} \langle R^A_{\X_\sigma} (P), |\bbf|^2  \rangle \leqslant \frac{C}{2} \| R^A_{\X_\sigma} (P)  \|  \left( \sup_{x \in X} |\bbf(x)|_x \right)^2. \qedhere
\end{equation*}
\end{proof}

Before stating the convergence result, we need to explain how to build space-time reconstruction operators from the spatial ones. These operators will be denoted $\Rc^{\CE}_{N, \X_\sigma}, \Rc^{A}_{N, \X_\sigma} : (\X_\sigma)^{N+1} \to \M([0,1] \times X)$ and $\Rc_{N, \Y_\sigma} : (\Y_\sigma)^N \to \M([0,1] \times TX)$. Let $(P,\Mbf) = ((P_k)_{0 \leqslant k \leqslant N}, (\Mbf_k)_{1 \leqslant k \leqslant N})$ a point in $(\X_\sigma)^{N+1} \times (\Y_\sigma)^{N}$. For $\Rc^{\CE}_{N,\X_\sigma}$ we use \review{piecewise} linear interpolations in time on intervals $[(k-1) \tau, k \tau]$, while for $\Rc^A_{N, \X_\sigma}$ as well as $\Rc_{N, \Y_\sigma}$, we use piecewise constant interpolation on these same intervals. Specifically for any test functions $\phi$ and $\bbf$ in respectively $C([0,1] \times X)$ and $C([0,1] \times X, TX)$, 
\begin{align*}
\llangle \Rc^{\CE}_{N, \X_\sigma}(P), \phi \rrangle & = \sum_{k=1}^N \int_{(k-1) \tau}^{k \tau} \left\langle \frac{k \tau - t}{\tau} R^{\CE}_{\X_\sigma}(P_{k-1}) + \frac{t - (k-1) \tau}{\tau} R^{\CE}_{\X_\sigma}(P_k), \phi(t, \cdot)  \right\rangle ~ \ddr t, \\
\llangle \Rc^A_{N, \X_\sigma}(P), \phi \rrangle & = \sum_{k=1}^N \left\langle R^A_{\X_\sigma} \left( \frac{P_{k-1} + P_k}{2} \right), \int_{(k-1)\tau}^{k \tau} \phi(t, \cdot) ~ \ddr t \right\rangle, \\ 
\llangle \Rc_{N, \Y_\sigma }(\Mbf), \bbf \rrangle & = \sum_{k=1}^N \left\langle R_{\Y_\sigma}(\Mbf_k), \int_{(k-1) \tau}^{k \tau} \bbf(t, \cdot) ~ \ddr t \right\rangle.    
\end{align*}

\subsection{Statement of the convergence result}

The main result of the present work is that, properly reconstructed, the minimizers of $\J^{N, \sigma}_{S_{\X_\sigma}(\rho_0), S_{\X_\sigma}(\rho_1)}$ will converge to the ones of $\J_{\rho_0, \rho_1}$ in the limit $N \to + \infty, \sigma \to 0$. In the sequel, by the limit $N \to + \infty, \sigma \to 0$, it means that we can take $(N_n)_{n \in \N}$ and $(\sigma_n)_{n \in \N}$ two sequences, tending respectively to $+ \infty$ and $0$ with \emph{no} specification of the speed at which these convergences occur, and the result holds when we study the limit $n \to + \infty$ of minimizers of $\J^{N_n, \sigma_n}_{S_{\X_{\sigma_n}}(\rho_0), S_{\X_{\sigma_n}}(\rho_1)}$.  

\begin{theo}
\label{theo_main}
Let $(\X_\sigma, \Y_\sigma, A_\sigma, \Div_\sigma)_\sigma$ a family of finite dimensional models of dynamical optimal transport which is adapted to the Riemannian manifold $(X,g)$ in the sense of Definition \ref{definition_adapated}, with reconstruction and sampling operators $(R^{\CE}_{\X_\sigma}, R^A_{\X_\sigma}, R_{\Y_\sigma}, S_{\X_\sigma}, S_{\Y_\sigma})_\sigma$. 

Let $\rho_0, \rho_1 \in \M_+(X)$ sharing the same total mass be given. For any $N$ and $\sigma > 0$, let $(P^{N, \sigma}, \Mbf^{N, \sigma}) \in (\X_\sigma)^{N+1} \times (\Y_\sigma)^N$ be minimizers of $\J^{N, \sigma}_{S_{\X_\sigma}(\rho_0), S_{\X_\sigma}(\rho_1)}$. 

Then, in the limit $N \to + \infty$ and $\sigma \to 0$, up to the extraction of a subsequence, $\Rc^{\CE}_{N, \X_\sigma}(P^{N, \sigma})$ and $\Rc^{A}_{N, \X_\sigma}(P^{N, \sigma})$ will converge weakly to the same limit $\rho \in \M([0,1] \times X)$, and $\Rc_{N, \Y_\sigma}(\Mbf^{N, \sigma})$ will converge weakly to $\mbf \in \M([0,1] \times TX)$, in addition the pair $(\rho, \mbf)$ is a minimizer of $\J_{\rho_0, \rho_1}$ and 
\begin{equation*}
\lim_{N \to + \infty, \sigma \to 0} \J^{N, \sigma}_{S_{\X_\sigma}(\rho_0), S_{\X_\sigma}(\rho_1)} ( P^{N, \sigma}, \Mbf^{N, \sigma} ) = \J_{\rho_0, \rho_1} (\rho, \mbf). 
\end{equation*} 
\end{theo}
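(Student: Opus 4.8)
The plan is to establish Theorem \ref{theo_main} via a standard $\Gamma$-convergence argument, consisting of three ingredients: compactness, a $\liminf$ inequality, and a $\limsup$ (recovery sequence) inequality. Indeed, the statement already refers to two auxiliary results, \emph{Theorem \ref{theo_GammaLiminf}} and \emph{Theorem \ref{theo_GammaLimsup}}, which I expect encode precisely the lower and upper bounds; the main theorem should then follow by assembling them together with a compactness statement. So the structure of my proof would be: (i) show that minimizers $(P^{N,\sigma}, \Mbf^{N,\sigma})$, once reconstructed, form a weakly precompact family so that subsequential limits $(\rho, \mbf)$ exist; (ii) use the $\liminf$ inequality to show that any such limit is admissible for $\CE(\rho_0, \rho_1)$ and satisfies $\J_{\rho_0,\rho_1}(\rho,\mbf) \leqslant \liminf \J^{N,\sigma}$; (iii) use the $\limsup$ inequality to bound $\limsup \J^{N,\sigma}(P^{N,\sigma}, \Mbf^{N,\sigma})$ by $\min \J_{\rho_0, \rho_1}$, so that the reconstructed limit is in fact a minimizer and the energies converge.

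For compactness, the key is a uniform energy bound: since $(P^{N,\sigma}, \Mbf^{N,\sigma})$ are \emph{minimizers}, their energy is dominated by the energy of any fixed competitor, and \ref{asmp_controllability} together with Proposition \ref{prop_controllability} gives precisely such an upper bound in terms of $\tfrac12 W_2^2(\rho_0, \rho_1)$, uniform in $N$ and $\sigma$. From a bound on $\A(\Rc^A_{N,\X_\sigma}(P^{N,\sigma}), \Rc_{N,\Y_\sigma}(\Mbf^{N,\sigma}))$ one extracts that the reconstructed momenta have bounded total variation (via the inequality $\|\mbf\| \leqslant C\sqrt{\A(\rho,\mbf)}\sqrt{\|\rho\|}$ of Benamou--Brenier type, coming from \eqref{equation_control_A_Astar}), and the reconstructed densities have bounded mass; weak-$\ast$ compactness of bounded sets in $\M([0,1]\times X)$ and $\M([0,1]\times TX)$ then yields subsequential limits. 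I would also invoke \ref{asmp_interps} to check that $\Rc^{\CE}_{N,\X_\sigma}(P^{N,\sigma})$ and $\Rc^A_{N,\X_\sigma}(P^{N,\sigma})$ share the \emph{same} weak limit $\rho$, since they are asymptotically equivalent against $C^1$ test functions.

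The $\liminf$ step, handled by Theorem \ref{theo_GammaLiminf}, is where the structural assumptions do the work: passing the discrete continuity equation to the limit requires \ref{asmp_interp_der} (asymptotic commutation of reconstruction and derivation) to recover $(\rho,\mbf) \in \CE(\rho_0,\rho_1)$, while lower semicontinuity of the action along the weak limit is obtained by dualizing against smooth $\bbf$, using \ref{asmp_interp_action} to compare the discrete Legendre transform $A^\star_\sigma$ with the continuous $A^\star$. The $\limsup$ step, Theorem \ref{theo_GammaLimsup}, constructs a recovery sequence for an \emph{arbitrary} optimal $(\rho,\mbf)$: the difficulty is that the optimizer may be very singular (no density, no lower bound on mass), so one first regularizes it using Proposition \ref{prop_regularization} — here the convexity and smoothness of $\dr X$ are used — to a smooth competitor with strictly positive density, then samples it via $S_{\X_\sigma}, S_{\Y_\sigma}$. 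Commutation \ref{asmp_samp_der} guarantees the sampled pair still solves the discrete continuity equation, and \ref{asmp_samp_actio} controls its discrete action from above by the continuous one.

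\textbf{The main obstacle I anticipate} is the interplay between the two independent limits $N \to +\infty$ and $\sigma \to 0$ taken \emph{without any relation between the step sizes}. The delicate points are, first, controlling the temporal discretization error of the action — recall from Remark \ref{rmk_averaging_needed} that the averaging $\tfrac{P_{k-1}+P_k}{2}$ is essential to avoid placing momentum on voids, and the $\liminf$ bound must survive the coupling of this time-averaging with the spatial error terms $\varepsilon_\sigma$ uniformly in $N$; and second, in the recovery sequence, ensuring that the regularization parameter can be sent to zero \emph{after} $N, \sigma$ in a way compatible with a diagonal extraction, while keeping the boundary data $S_{\X_\sigma}(\rho_0), S_{\X_\sigma}(\rho_1)$ consistent with the reconstructed endpoints via \ref{asmp_interp_sampling}. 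Managing these error terms so that nothing blows up when, say, $\sigma \to 0$ much faster or slower than $\tau = 1/N \to 0$ is exactly the unconditional-convergence feature that distinguishes this result, and is where I expect the bulk of the technical care to lie.
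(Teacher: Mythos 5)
Your proposal is correct and follows essentially the same route as the paper: the author explicitly derives Theorem \ref{theo_main} by combining Theorem \ref{theo_GammaLiminf} (which already packages the compactness and lower bound under a uniform energy bound) with Theorem \ref{theo_GammaLimsup} applied to a continuous minimizer, the uniform energy bound on the discrete minimizers coming from Proposition \ref{prop_controllability} exactly as you indicate. The only cosmetic difference is that the controllability bound is of the form $\hat{\omega}(W_2(\rho_0,\rho_1)) + \varepsilon_{N,\sigma}\|\rho_0\|$ rather than literally $\tfrac{1}{2}W_2^2(\rho_0,\rho_1)$, but this does not affect the argument.
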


\noindent In particular, if the minimizer of $\J_{\rho_0, \rho_1}$ is unique (which can be guaranteed in some situation, see Theorem \ref{theorem_J_W2}), we can remove the ``up to the extraction of a subsequence'' in the statement.  

\begin{rmk}
We emphasize that the reconstructed densities $\Rc^{\CE}_{N, \X_\sigma}(P^{N, \sigma})$, $\Rc^{A}_{N, \X_\sigma}(P^{N, \sigma})$ and the reconstructed momentum $\Rc_{N, \Y_\sigma}(\Mbf^{N, \sigma})$ do not necessarily satisfy a continuity equation, nor have a finite action. Only their weak limit do.  
\end{rmk}

Such a result is in fact obtained by combining two sharper results, which look like a $\Gamma$-convergence result.  

\begin{theo}
\label{theo_GammaLiminf}
Let $(\X_\sigma, \Y_\sigma, A_\sigma, \Div_\sigma)_\sigma$ and $(R^{\CE}_{\X_\sigma}, R^A_{\X_\sigma}, R_{\Y_\sigma}, S_{\X_\sigma}, S_{\Y_\sigma})_\sigma$ be as in Theorem \ref{theo_main}.

Let $\rho_0, \rho_1 \in \M_+(X)$ sharing the same total mass be given. For any $N$ and $\sigma > 0$, let $(P^{N, \sigma}, \Mbf^{N, \sigma}) \in (\X_\sigma)^{N+1} \times (\Y_\sigma)^N$ such that 
\begin{equation*}
\sup_{N, \sigma} \J_{S_{\X_\sigma}(\rho_0), S_{\X_\sigma}(\rho_1)}^{N, \sigma}(P^{N, \sigma}, \Mbf^{N, \sigma}) < + \infty.
\end{equation*}
Then, in the limit $N \to + \infty$ and $\sigma \to 0$, up to the extraction of a subsequence, $\Rc^{\CE}_{N, \X_\sigma}(P^{N, \sigma})$ and $\Rc^{A}_{N, \X_\sigma}(P^{N, \sigma})$ will converge weakly to the same limit $\rho \in \M([0,1] \times X)$, and $\Rc_{N, \Y_\sigma}(\Mbf^{N, \sigma})$ will converge weakly to $\mbf \in \M([0,1] \times TX)$, in addition
\begin{equation*}
\J_{\rho_0, \rho_1}(\rho, \mbf) \leqslant \liminf_{N \to + \infty, \sigma \to 0} \J_{S_{\X_\sigma}(\rho_0), S_{\X_\sigma}(\rho_1)}^{N, \sigma}(P^{N, \sigma}, \Mbf^{N, \sigma}). 
\end{equation*}
\end{theo}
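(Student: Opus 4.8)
The plan is to follow the usual structure of a $\Gamma$-$\liminf$ inequality in three stages: first extract weak limits by establishing uniform mass and momentum bounds, then pass to the limit in the discrete continuity equation to get $(\rho,\mbf)\in\CE(\rho_0,\rho_1)$, and finally produce the lower bound on the action through its dual representation, which is what makes the vacuum-behaviour harmless.

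\textbf{Compactness.} Since the competitors satisfy $P^{N,\sigma}_k\in\X_{\sigma,+}$, the reconstructed densities are positive measures, so their total variation equals their mass $\langle R^{\CE}_{\X_\sigma}(P_k),1\rangle$. First I would bound the momentum: combining the duality inequality \eqref{equation_control_A_Astar} with the second (global) estimate of \ref{asmp_interp_action} applied to $\Bbf=R_{\Y_\sigma}^\top(\bbf)$ yields, for the averaged density $P=(P_{k-1}+P_k)/2$, the bound $\|R_{\Y_\sigma}(\Mbf_k)\|\leq C\,\sqrt{A_\sigma(P,\Mbf_k)}\,\sqrt{\|R^A_{\X_\sigma}(P)\|}$. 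A Cauchy--Schwarz in $k$ then controls $\tau\sum_k\|R_{\Y_\sigma}(\Mbf_k)\|$ by $\sqrt{\J^{N,\sigma}}$ times the square root of the averaged reconstructed mass. To close the loop I would test the discrete continuity equation against the constant function: by \ref{asmp_interp_der} with $\phi\equiv 1$ the total mass $m_k:=\langle R^{\CE}_{\X_\sigma}(P_k),1\rangle$ varies by at most $\tau\varepsilon_\sigma\|R_{\Y_\sigma}(\Mbf_k)\|$ per step, so $\sup_k|m_k-m_0|\leq\varepsilon_\sigma\,\tau\sum_k\|R_{\Y_\sigma}(\Mbf_k)\|$; since $m_0\to\|\rho_0\|$ by \ref{asmp_interp_sampling} and the two reconstructions have comparable masses by \ref{asmp_interps} (tested with $\pm 1$), a short bootstrap gives a uniform bound on $\sup_k m_k$ and hence on $\tau\sum_k\|R_{\Y_\sigma}(\Mbf_k)\|$. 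Banach--Alaoglu produces weak limits $\rho^{\CE},\rho^A,\mbf$ along a subsequence; integrating \ref{asmp_interps} in time, together with the elementary fact that the affine and piecewise-constant interpolations of the averaged densities agree up to $O(\tau)$ against smooth test functions, gives $\rho^{\CE}=\rho^A=:\rho$.

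\textbf{Continuity equation.} Fix $\phi\in C^1([0,1]\times X)$. Since $\Rc^{\CE}_{N,\X_\sigma}(P)$ is piecewise affine in time, an integration by parts in $t$ writes $\llangle\Rc^{\CE}_{N,\X_\sigma}(P),\partial_t\phi\rrangle$ as a bulk term $-\sum_k\int\langle\tau^{-1}(R^{\CE}_{\X_\sigma}(P_k)-R^{\CE}_{\X_\sigma}(P_{k-1})),\phi\rangle\,\ddr t$ plus the boundary contributions $\langle R^{\CE}_{\X_\sigma}(P_N),\phi(1,\cdot)\rangle-\langle R^{\CE}_{\X_\sigma}(P_0),\phi(0,\cdot)\rangle$, which converge to $\langle\rho_1,\phi(1,\cdot)\rangle-\langle\rho_0,\phi(0,\cdot)\rangle$ by \ref{asmp_interp_sampling}. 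In the bulk term I substitute the discrete continuity equation $\tau^{-1}(P_k-P_{k-1})=-\Div_\sigma(\Mbf_k)$ and invoke \ref{asmp_interp_der} to replace $\langle R^{\CE}_{\X_\sigma}(\Div_\sigma\Mbf_k),\phi\rangle$ by $-\langle R_{\Y_\sigma}(\Mbf_k),\nabla\phi\rangle$ up to an error whose space-time sum is $\varepsilon_\sigma\,\tau\sum_k\|R_{\Y_\sigma}(\Mbf_k)\|\to 0$; the remaining term is $-\llangle\Rc_{N,\Y_\sigma}(\Mbf),\nabla\phi\rrangle\to-\llangle\mbf,\nabla\phi\rrangle$. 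Passing to the limit yields $\llangle\rho,\partial_t\phi\rrangle+\llangle\mbf,\nabla\phi\rrangle=\langle\rho_1,\phi(1,\cdot)\rangle-\langle\rho_0,\phi(0,\cdot)\rangle$, i.e.\ $(\rho,\mbf)\in\CE(\rho_0,\rho_1)$.

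\textbf{Lower bound on the action.} Here I exploit $\A(\rho,\mbf)=\sup_{\bbf}\bigl(\llangle\mbf,\bbf\rrangle-\A^\star(\rho,\bbf)\bigr)$ over $\bbf\in C^1([0,1]\times X,TX)$. Fix such a $\bbf$, set $\bbf_k:=\tau^{-1}\int_{(k-1)\tau}^{k\tau}\bbf(t,\cdot)\,\ddr t$, and test the finite-dimensional Legendre duality with $\Bbf_k=R_{\Y_\sigma}^\top(\bbf_k)$: this gives $A_\sigma((P_{k-1}+P_k)/2,\Mbf_k)\geq\langle R_{\Y_\sigma}(\Mbf_k),\bbf_k\rangle-A^\star_\sigma((P_{k-1}+P_k)/2,R_{\Y_\sigma}^\top(\bbf_k))$. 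Multiplying by $\tau$, summing, and using the first estimate of \ref{asmp_interp_action} on the $A^\star_\sigma$ term bounds $\J^{N,\sigma}$ below by $\llangle\Rc_{N,\Y_\sigma}(\Mbf),\bbf\rrangle-\tau\sum_k\tfrac{1}{2}\langle R^A_{\X_\sigma}((P_{k-1}+P_k)/2),|\bbf_k|^2\rangle-\varepsilon_\sigma\,O(1)$. Jensen's inequality gives $|\bbf_k|^2\leq\tau^{-1}\int_{(k-1)\tau}^{k\tau}|\bbf(t,\cdot)|^2\,\ddr t$ pointwise, so the subtracted term is at most $\tfrac{1}{2}\llangle\Rc^A_{N,\X_\sigma}(P),|\bbf|^2\rrangle\to\A^\star(\rho,\bbf)$, while the first term tends to $\llangle\mbf,\bbf\rrangle$. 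Taking $\liminf$ and then the supremum over $\bbf$ yields $\A(\rho,\mbf)\leq\liminf\J^{N,\sigma}$, which with the continuity equation gives $\J_{\rho_0,\rho_1}(\rho,\mbf)\leq\liminf\J^{N,\sigma}$.

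\textbf{Main obstacle.} The delicate point is the compactness step: the bound on the momentum is only available \emph{in terms of} the reconstructed density mass, while the near-conservation of density mass itself relies on the momentum bound. Making this bootstrap rigorous --- and simultaneously checking that the two \emph{a priori} distinct reconstructions $\Rc^{\CE}_{N,\X_\sigma}$ and $\Rc^A_{N,\X_\sigma}$ share a limit despite using affine versus piecewise-constant temporal interpolation --- is where \ref{asmp_interp_sampling}, \ref{asmp_interps} and \ref{asmp_interp_der} must be combined carefully. By contrast, the lower semicontinuity is rendered routine by the dual formulation, and \ref{asmp_controllability} plays no role here (it is needed only for the matching $\Gamma$-$\limsup$ bound).
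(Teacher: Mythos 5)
Your proposal is correct and follows essentially the same route as the paper's proof: the same mass/momentum bootstrap from \ref{asmp_interp_der} tested against constants together with \eqref{equation_control_A_Astar} and \ref{asmp_interp_action}, the same identification of the two reconstructed limits via \ref{asmp_interps}, the same summation-by-parts argument for the continuity equation, and the same dual (Legendre) argument for the lower semicontinuity of the action. The only cosmetic differences are that you take the supremum over $\bbf$ at the very end rather than fixing an $\eta$-almost-optimal $\bbf$ first, and you invoke Jensen on the time-averaged $\bbf_k$ where the paper applies \ref{asmp_interp_action} pointwise in $t$ before averaging; both variants are equivalent.
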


\begin{theo}
\label{theo_GammaLimsup}
Let $(\X_\sigma, \Y_\sigma, A_\sigma, \Div_\sigma)_\sigma$ and $(R^{\CE}_{\X_\sigma}, R^A_{\X_\sigma}, R_{\Y_\sigma}, S_{\X_\sigma}, S_{\Y_\sigma})_\sigma$ be as in Theorem \ref{theo_main}.

Let $\rho_0, \rho_1 \in \M_+(X)$ sharing the same total mass be given, and let $(\rho, \mbf) \in \M([0,1] \times X) \times \M([0,1] \times TX)$ be such that $\J_{\rho_0, \rho_1}(\rho, \mbf) < + \infty$. For any $N$ and $\sigma > 0$, we can build $(P^{N, \sigma}, \Mbf^{N, \sigma}) \in (\X_\sigma)^{N+1} \times (\Y_\sigma)^N$ such that 
\begin{equation*}
\limsup_{N \to + \infty, \sigma \to 0}  \J_{S_{\X_\sigma}(\rho_0), S_{\X_\sigma}(\rho_1)}^{N, \sigma}(P^{N, \sigma}, \Mbf^{N, \sigma}) \leqslant \J_{\rho_0, \rho_1} (\rho, \mbf).
\end{equation*}
\end{theo}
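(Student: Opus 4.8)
The plan is to produce a recovery sequence in three stages: regularize $(\rho,\mbf)$ into a space-time smooth competitor, sample this smooth object directly, and finally repair the temporal boundary data. Throughout, $\J_{\rho_0,\rho_1}(\rho,\mbf)=\A(\rho,\mbf)$ since finiteness forces $(\rho,\mbf)\in\CE(\rho_0,\rho_1)$.

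First I would invoke the regularization result (Proposition \ref{prop_regularization}) to obtain, for each small $\eta>0$, a pair $(\rho^\eta,\mbf^\eta)\in\CE(\rho_0^\eta,\rho_1^\eta)$ whose density $\rho^\eta$ is smooth and bounded below by a positive constant, whose momentum $\mbf^\eta$ is smooth and satisfies the no-flux boundary condition, and such that $\A(\rho^\eta,\mbf^\eta)\to\A(\rho,\mbf)$ while $\rho_0^\eta\to\rho_0$ and $\rho_1^\eta\to\rho_1$ in $W_2$ (hence weakly) as $\eta\to0$. This is precisely where the smoothness and convexity of $\dr X$ enter, since mollifying a curve of measures while preserving positivity and the no-flux condition requires controlling the geometry near the boundary.

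For fixed $\eta$ and given $N,\sigma$, I would then set $P_k:=S_{\X_\sigma}(\rho^\eta(k\tau,\cdot))$ and $\Mbf_k:=S_{\Y_\sigma}\big(\tau^{-1}\int_{(k-1)\tau}^{k\tau}\mbf^\eta(t,\cdot)\,\ddr t\big)$. Integrating $\dr_t\rho^\eta+\nabla\cdot\mbf^\eta=0$ over $[(k-1)\tau,k\tau]$, applying the linear operator $S_{\X_\sigma}$, and using the \emph{exact} commutation of sampling and derivation \ref{asmp_samp_der} yields the discrete continuity equation $\tau^{-1}(P_k-P_{k-1})+\Div_\sigma\Mbf_k=0$ exactly (the time-integrated momentum is again smooth with no-flux, so \ref{asmp_samp_der} applies). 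For the cost, linearity of $S_{\X_\sigma}$ gives $\tfrac12(P_{k-1}+P_k)=S_{\X_\sigma}(\bar\rho_k^\eta)$ with $\bar\rho_k^\eta=\tfrac12(\rho^\eta((k-1)\tau,\cdot)+\rho^\eta(k\tau,\cdot))$ still smooth and bounded below, so the one-sided estimate on smooth densities \ref{asmp_samp_actio} bounds each term $A_\sigma(\tfrac12(P_{k-1}+P_k),\Mbf_k)$ by $A(\bar\rho_k^\eta,\bar\mbf_k^\eta)+\varepsilon_\sigma$. Summing with weight $\tau$, the joint convexity of $A$ (Jensen) together with the smoothness of $(\rho^\eta,\mbf^\eta)$ turns $\tau\sum_k A(\bar\rho_k^\eta,\bar\mbf_k^\eta)$ into a Riemann sum converging to $\int_0^1 A(\rho^\eta(t,\cdot),\mbf^\eta(t,\cdot))\,\ddr t=\A(\rho^\eta,\mbf^\eta)$ as $N\to\infty$, while $\tau\sum_k\varepsilon_\sigma=\varepsilon_\sigma\to0$. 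Thus the bulk contribution to $\J^{N,\sigma}$ is at most $\A(\rho^\eta,\mbf^\eta)+o(1)$.

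The sampled path, however, starts at $S_{\X_\sigma}(\rho_0^\eta)$ and ends at $S_{\X_\sigma}(\rho_1^\eta)$, whereas $\J^{N,\sigma}_{S_{\X_\sigma}(\rho_0),S_{\X_\sigma}(\rho_1)}$ requires the endpoints $S_{\X_\sigma}(\rho_0)$ and $S_{\X_\sigma}(\rho_1)$; reconciling these is the \emph{main obstacle}, since $\rho_0,\rho_1$ may be arbitrary (singular) measures and so the regularized endpoints cannot equal them. To repair this I would reserve a vanishing fraction of the time steps near $t=0$ and $t=1$ and connect $S_{\X_\sigma}(\rho_0)$ to $S_{\X_\sigma}(\rho_0^\eta)$, and $S_{\X_\sigma}(\rho_1^\eta)$ to $S_{\X_\sigma}(\rho_1)$, through the controllability mechanism \ref{asmp_controllability} as packaged in Proposition \ref{prop_controllability}, producing discrete connections whose total cost is controlled by $W_2^2(\rho_0,\rho_0^\eta)$ and $W_2^2(\rho_1,\rho_1^\eta)$ uniformly in $\sigma$. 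Glueing the three pieces gives an admissible competitor with $\limsup_{N\to\infty,\sigma\to0}\J^{N,\sigma}\leqslant\A(\rho^\eta,\mbf^\eta)+C\big(W_2^2(\rho_0,\rho_0^\eta)+W_2^2(\rho_1,\rho_1^\eta)\big)$. Letting $\eta\to0$ and extracting a diagonal sequence $\eta=\eta(N,\sigma)\to0$ slowly enough then yields the stated inequality. The delicate point, and the reason \ref{asmp_controllability} is indispensable here, is precisely that these connecting steps must be cheap at a rate governed by $W_2^2$ between the exact and regularized data rather than by the spatial resolution $\sigma$.
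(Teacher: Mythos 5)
Your proposal is correct and follows essentially the same route as the paper: regularize via Proposition \ref{prop_regularization}, sample the smooth pair on the bulk of the time grid using \ref{asmp_samp_der} and \ref{asmp_samp_actio}, and repair the endpoints with the controllability of Proposition \ref{prop_controllability}, then send the regularization parameter to zero. The only detail to tighten is the boundary layer: the connecting paths compressed into a time window of length $T$ cost $\hat{\omega}(W_2(\rho_0,\rho_0^\eta))/T$ (the $1/T$ factor is missing from your final bound), so one must fix $T$ first, pass to the limit in $N,\sigma$, and only then choose $\eta$ small depending on $T$ — a "vanishing fraction of time steps" with $\eta$ fixed would make this term blow up.
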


\noindent It is of course very simple to obtain Theorem \ref{theo_main} from Theorems \ref{theo_GammaLiminf} and \ref{theo_GammaLimsup}, the delicate part being the proof of the latter.

\begin{rmk}
\review{This result looks like a $\Gamma$-convergence result, but we did not succeeded in writing it like one mainly for technical issues. The main one is that there is no guarantee in Theorem \ref{theo_GammaLimsup} that the momentum $\Rc_{N, \Y_\sigma}(\Mbf^{N, \sigma})$ converges to $\mbf$ in the limit $N \to + \infty, \sigma \to 0$ because our ``recovery sequence'' is not built by regularizing the momentum, see Remark \ref{rmk_m_not_optimal}. Moreover, in our approach there is no canonical way to inject $\X_\sigma$ and $\Y_\sigma$ into $\M(X)$ and $\M(TX)$, only different ways which are all useful and asymptotically equivalent.}  

\review{The primary motivation of our study is to give guarantees about convergence of numerical discretizations of dynamical optimal transport, and we see Theorem \ref{theo_main} as providing a reasonable answer even though it is not a $\Gamma$-convergence result.} 
\end{rmk}


In the sequel, we first prove Theorems \ref{theo_GammaLiminf} and \ref{theo_GammaLimsup}, and then show how previously proposed discretizations satisfy the assumptions of Definition \ref{definition_adapated}: this is the object of Section \ref{section_proof} and Section \ref{section_examples} respectively. Given the way our statements are phrased, these two sections are entirely independent.

\section{Proof of the convergence result}
\label{section_proof}

The goal of this section is to prove Theorems \ref{theo_GammaLiminf} as well as \ref{theo_GammaLimsup}. As explained in the introduction, the main issue is the discontinuity of the function $(\rho, \mbf) \mapsto |\mbf|^2 /(2 \rho)$ at the point $(0,0)$. As this function is nevertheless lower semi-continuous, Theorem \ref{theo_GammaLiminf} is still quite straightforward. However, for the proof \review{of} Theorem \ref{theo_GammaLimsup} we rely on careful approximation arguments. 

\subsection{Proof of Theorem \ref{theo_GammaLiminf}}

Recall that we assume that $\rho_0, \rho_1 \in \M_+(X)$ sharing the same total mass are given. For any $N, \sigma$, we have $(P^{N, \sigma}, \Mbf^{N, \sigma}) \in (\X_\sigma)^{N+1} \times (\Y_\sigma)^N$ such that 
\begin{equation*}
\sup_{N, \sigma} \J_{S_{\X_\sigma}(\rho_0), S_{\X_\sigma}(\rho_1)}^{N, \sigma}(P^{N, \sigma}, \Mbf^{N, \sigma}) < + \infty.
\end{equation*}

\medskip

\emph{First step: uniform estimates on the mass of the measures}. First, we need to prove that, as measures, $\Rc^{\CE}_{N, \sigma}(P^{N, \sigma})$, $\Rc^{A}_{N, \sigma}(P^{N, \sigma})$ and $\Rc_{N, \Y_\sigma}(\Mbf^{N, \sigma})$ are uniformly bounded. To that extent, we rely both on the continuity equation and the one-sided estimate on the action \ref{asmp_interp_action}. Given a particular discretization (like the ones in Section \ref{section_examples}), a more direct proof may exist, but we want to stress out that it can already be proved only with the assumptions in Definition \ref{definition_adapated} (even though it leads to a more technical estimates).

Let us use the continuity equation to get an estimate on $\| \Rc^{\CE}_{N, \X_\sigma}(P^{N, \sigma}) \|$ and $\|\Rc^A_{N, \X_\sigma}(P^{N, \sigma}) \|$.
\review{We recall that $P^{N, \sigma}_k \in \X_{\sigma, +}$ so $R^{\CE}_{\X_\sigma}(P^{N, \sigma}_{k})$ is a positive measure: to evaluate its mass $\| R^{\CE}_{\X_\sigma}(P^{N, \sigma}_{k}) \|$ we just have to take the duality production with a constant function equal to $1$. So}
let us take the discrete continuity equation, first apply $R^{\CE}_{\X_\sigma}$ and then take the duality product with $\phi$ the constant function equal to $1$. We get, for any $k \in \{ 1,2, \ldots, N \}$,
\begin{equation*}
\langle R^{\CE}_{\X_\sigma}(P^{N, \sigma}_{k}), \phi \rangle = \langle R^{\CE}_{\X_\sigma}(P^{N, \sigma}_{k-1}), \phi \rangle + \tau \langle (R^{\CE}_{\X_\sigma} \circ \Div_\sigma) (\Mbf^{N, \sigma}_k), \phi \rangle.
\end{equation*} 
On the other hand, because of the assumption \ref{asmp_interp_der} and as $\nabla \phi = 0$, up to an error $\varepsilon_\sigma$ which goes to $0$ as $\sigma \to 0$, 
\begin{equation*}
\left| \|R^{\CE}_{\X_\sigma}(P^{N, \sigma}_{k}) \| - \|R^{\CE}_{\X_\sigma}(P^{N, \sigma}_{k-1})\| \right| \leqslant \varepsilon_\sigma \tau \|R_{\Y_\sigma}(\Mbf^{N, \sigma}_k) \|.
\end{equation*} 
Moreover, we know that $R^{\CE}_{\X_\sigma}(P^{N, \sigma}_{0}) = (R^{\CE}_{\X_\sigma} \circ S_{\X_\sigma})(\rho_0)$ converges to $\rho_0$, which has a given mass. Summing the estimates above, we see that for any $k \in \{ 0,1, \ldots, N \}$, 
\begin{equation*}
\left| \|R^{\CE}_{\X_\sigma}(P^{N, \sigma}_{k}) \| - \| \rho_0 \| \right| \leqslant \varepsilon_\sigma + \varepsilon_\sigma \tau \sum_{l=1}^N \|R_{\Y_\sigma}(\Mbf^{N, \sigma}_k) \| =  \varepsilon_\sigma +  \varepsilon_\sigma \| \Rc_{N, \Y_\sigma}(\Mbf^{N, \sigma}) \|. 
\end{equation*}
Using \ref{asmp_interps} tested against the constant function \review{equal to 1, and again as $R^{A}_{\X_\sigma} (P^{N, \sigma}_k) $ and $R^{\CE}_{\X_\sigma} (P^{N, \sigma}_k)$ are positive measures}, we deduce that $\| R^{A}_{X_\sigma} (P^{N, \sigma}_k) \| \leqslant \| R^{\CE}_{X_\sigma} (P^{N, \sigma}_k) \| + \varepsilon_\sigma$ for all $k \in \{ 0,1, \ldots N \}$.
Summing these estimates over $k$, we get that 
\begin{equation}
\label{zz_aux_2}
\|\Rc^{A}_{N,\X_\sigma}(P^{N, \sigma}) \| + \| \Rc^{\CE}_{N,\X_\sigma}(P^{N, \sigma}) \|  \leqslant 2 \| \rho_0 \| + \varepsilon_\sigma + \varepsilon_\sigma \| \Rc_{N, \Y_\sigma}(\Mbf^{N, \sigma}) \|,
\end{equation} 
where $\varepsilon_\sigma$ may have changed from one line to another but keeps the property of tending to $0$ as $\sigma \to 0$.

Now we have to control the norm of the reconstructed momentum $\| \Rc_{N, \Y_\sigma}(\Mbf^{N, \sigma}) \|$. Let $\bbf \in C([0,1] \times X,TX)$ such that $|\bbf(t,x)| \leqslant 1$ for all $(t,x) \in [0,1] \times X$. Not to overburden the equations, let us introduce for $k \in \{ 1,2, \ldots, N \}$ the functions $\bbf_k \in C(X,TX)$ defined by 
\begin{equation*}
\bbf_k(x) = \frac{1}{\tau} \int_{(k-1) \tau}^{k \tau} \bbf(t,x) ~ \ddr t,
\end{equation*}
they also satisfy the pointwise estimate $|\bbf_k(x)|_x \leqslant 1$ for all $k$ and $x$. Using estimate \eqref{equation_control_A_Astar} followed by Cauchy-Schwarz, we get:
\review{
\begin{align*}
\llangle  \Rc_{N, \Y_\sigma}(\Mbf^{N, \sigma}), \bbf \rrangle 
&= \sum_{k=1}^N \tau \langle R_{\Y_\sigma} (\Mbf^{N, \sigma}_k), \bbf_k \rangle \\
& \leqslant \sum_{k=1}^N 2 \tau  \sqrt{ A_\sigma \left( \frac{P^{N, \sigma}_{k-1} + P^{N, \sigma}_{k}}{2} , \Mbf^{N, \sigma}_k  \right)} \sqrt{A^\star_\sigma \left(  \frac{P^{N, \sigma}_{k-1} + P^{N, \sigma}_{k}}{2}, R_{\Y_\sigma}^\top (\bbf_k)   \right)   }  \\
& \leqslant 2 \sqrt{ \sum_{k=1}^N \tau A_\sigma \left( \frac{P^{N, \sigma}_{k-1} + P^{N, \sigma}_{k}}{2} , \Mbf^{N, \sigma}_k  \right)  } \sqrt{ \sum_{k=1}^N \tau A^\star_\sigma \left(  \frac{P^{N, \sigma}_{k-1} + P^{N, \sigma}_{k}}{2}, R_{\Y_\sigma}^\top (\bbf_k)   \right)    }.
\end{align*}
The first term in the product is controlled by $\J_{S_{\X_\sigma}(\rho_0), S_{\X_\sigma}(\rho_1)}^{N, \sigma}(P^{N, \sigma}, \Mbf^{N, \sigma})$ which is assumed to be uniformly bounded in $N, \sigma$, while for the second we use the assumption \ref{asmp_interp_action} and the pointwise estimates on $\bbf_k$. Denoting by $C > 0$ a constant which may change from one equation to another, we end up with 
\begin{equation*}
\llangle \Rc_{N, \Y_\sigma}(\Mbf^{N, \sigma}), \bbf \rrangle \leqslant \sqrt{C} \sqrt{ \frac{C}{2} \sum_{k=1}^N \tau \left\| \frac{R^A_{\X_\sigma} (P^{N, \sigma}_{k-1})+R^A_{\X_\sigma} (P^{N, \sigma}_k)}{2} \right\|   } \leqslant C \sqrt{ \| \Rc^A_{N, \X_\sigma} (P^{N, \sigma}) \|  }.
\end{equation*}}  
Now we can take the supremum over all $\bbf$ and use \eqref{zz_aux_2} that we got from the discrete continuity equation to end up with 
\begin{equation*}
\| \Rc_{N, \Y_\sigma}(\Mbf^{N, \sigma}) \| \leqslant C \sqrt{C + C  \| \Rc_{N, \Y_\sigma}(\Mbf^{N, \sigma}) \| },
\end{equation*} 
with $C > 0$ a constant independent on $N$ and $\sigma$. 

It allows us to conclude that $\| \Rc_{N, \Y_\sigma}(\Mbf^{N, \sigma}) \| $ is bounded uniformly in $N$ and $\sigma$. Using \eqref{zz_aux_2}, we see that $\| \Rc^{A}_{N,\X_\sigma}(P^{N, \sigma}) \|$ and $ \| \Rc^{\CE}_{N,\X_\sigma}(P^{N, \sigma}) \|$ are also bounded uniformly in $N$ and $\sigma$. Hence, up to extraction, $\Rc^{\CE}_{N,\X_\sigma}(P^{N, \sigma})$, $\Rc^A_{N,\X_\sigma}(P^{N, \sigma})$ and $\Rc_{N, \Y_\sigma}(\Mbf^{N, \sigma})$ converge weakly in respectively $\M([0,1] \times X), \M([0,1] \times X)$ and $\M([0,1] \times TX)$ to some limits that we call respectively $\rho, \tilde{\rho}$ and $\mbf$.

\medskip

\emph{Second step: the limits $\rho$ and $\tilde{\rho}$ are the same}. Thanks to the assumption \ref{asmp_interps}, this is easy to see. Let us take $\phi \in C^1([0,1] \times X)$ a smooth function. In particular, $\{ \phi(t, \cdot) \ : \ t \in [0,1] \}$ is a bounded set of $C^1(X,TX)$. Moreover, up to an error controlled by $C \tau$ in $L^\infty$ norm, $\phi(t, \cdot)$ can be replaced by $\phi(k \tau, \cdot)$ for all $t \in [(k-1) \tau, k \tau]$. Given the uniform bound on the mass of $\Rc^{\CE}_{N,\X_\sigma}(P^{N, \sigma})$ and $\Rc^A_{N,\X_\sigma}(P^{N, \sigma})$, we conclude that 
\begin{equation*}
\left| \left\llangle ( \Rc^{\CE}_{N, \X_\sigma} - \Rc^A_{N, \X_\sigma} )(P^{N, \sigma}), \phi \right\rrangle \right| \leqslant \sum_{k=1}^N \tau \left| \left\langle (R^{\CE}_{\X_\sigma} - R^{A}_{\X_\sigma}) \left( \frac{P_{k-1} + P_k}{2} \right), \phi(k \tau, \cdot) \right\rangle \right| + C \tau.
\end{equation*}  
Now, given \ref{asmp_interps} and the uniform bound on $\| \Rc^{\CE}_{N, \X_\sigma}(P^{N, \sigma}) \| $ we see easily that, for some error $\varepsilon_\sigma$ which goes to $0$ as $\sigma \to 0$, 
\begin{equation*}
\left| \left\llangle ( \Rc^{\CE}_{N, \X_\sigma} - \Rc^A_{N, \X_\sigma} )(P^{N, \sigma}), \phi \right\rrangle \right| \leqslant \varepsilon_\sigma + C \tau.
\end{equation*}
Sending $N \to + \infty$ (hence $\tau \to 0$) and $\sigma \to 0$, we end up with $\langle \rho - \tilde{\rho}, \phi \rangle = 0$. As $\phi$ was an arbitrary smooth function, it gives us the equality between $\rho$ and $\tilde{\rho}$.

\medskip

\emph{Third step: passing to the limit in the continuity equation}. Let $\phi \in C^2([0,1] \times X)$ a smooth test function. We will test the continuity equation against $(\Rc^{\CE}_{N, \X_\sigma}(P^{N, \sigma}), \Rc_{N, \Y_\sigma}(\Mbf^{N, \sigma}))$ and pass to the limit. Indeed, an integration by parts followed by the use of the discrete continuity equation leads to 
\begin{align*}
& \llangle  \Rc^{\CE}_{N, \X_\sigma} (P^{N, \sigma}), \dr_t \phi \rrangle  
= \sum_{k=1}^N \int_{(k-1) \tau}^{k \tau} \left\langle \frac{k \tau - t}{\tau} R^{\CE}_{\X_\sigma}(P^{N, \sigma}_{k-1}) + \frac{t - (k-1) \tau}{\tau} R^{\CE}_{\X_\sigma}(P^{N, \sigma}_k), \dr_t \phi(t, \cdot)  \right\rangle \, \ddr t \\
&= \langle R^{\CE}_{\X_\sigma}(P^{N, \sigma}_N), \phi(1, \cdot) \rangle - \langle R^{\CE}_{\X_\sigma}(P^{N, \sigma}_0), \phi(0, \cdot) \rangle - \sum_{k=1}^N \left\langle R^{\CE}_{\X_\sigma} \left( \frac{P^{N, \sigma}_k - P^{N, \sigma}_{k-1}}{\tau} \right), \int_{(k-1) \tau}^{k \tau} \phi(t, \cdot) \,\ddr t  \right\rangle \\
&=\langle ( R^{\CE}_{\X_\sigma} \circ S_{\X_\sigma})(\rho_1), \phi(1, \cdot) \rangle - \langle ( R^{\CE}_{\X_\sigma} \circ S_{\X_\sigma})(\rho_0), \phi(0, \cdot) \rangle + \sum_{k=1}^N \left\langle (R^{\CE}_{\X_\sigma} \circ \Div_\sigma)( \Mbf^{N, \sigma}_k ), \int_{(k-1) \tau}^{k \tau} \phi(t, \cdot) \, \ddr t  \right\rangle.
\end{align*} 
As $\{ \phi(t, \cdot) \ : \ t \in [0,1] \}$ is a bounded set of $C^2(X)$, thanks to \ref{asmp_interp_der}, with an error $\varepsilon_\sigma$ tending to $0$ as $\sigma \to 0$, we can write that 
\begin{multline*}
\left| \underbrace{\sum_{k=1}^N \left\langle R_{\Y_\sigma}(\Mbf^{N, \sigma}_k), \int_{(k-1) \tau}^{k \tau} \nabla \phi(t, \cdot) \, \ddr t \right\rangle}_{ = \llangle \Rc_{N, \Y_\sigma}(\Mbf^{N, \sigma}), \nabla \phi \rrangle }  + \sum_{k=1}^N \left\langle (R^{\CE}_{\X_\sigma} \circ \Div_\sigma)( \Mbf^{N, \sigma}_k ), \int_{(k-1) \tau}^{k \tau} \phi(t, \cdot) \, \ddr t  \right\rangle \right| \\
\leqslant \varepsilon_\sigma \| \Rc_{N,\Y_\sigma} (\Mbf^{N, \sigma}) \|.
\end{multline*} 
Plugging back this information, we see that 
\begin{multline*}
\left| \llangle \Rc^{\CE}_{N, \X_\sigma} (P^{N, \sigma}), \dr_t \phi \rrangle + \llangle \Rc_{N, \Y_\sigma}(\Mbf^{N, \sigma}), \nabla \phi \rrangle - \langle ( R^{\CE}_{\X_\sigma} \circ S_{\X_\sigma})(\rho_1), \phi(1, \cdot) \rangle + \langle ( R^{\CE}_{\X_\sigma} \circ S_{\X_\sigma})(\rho_0), \phi(0, \cdot) \rangle   \right| \\
\leqslant  \varepsilon_\sigma \| \Rc_{N,\Y_\sigma} (\Mbf^{N, \sigma}) \|.
\end{multline*}
As $\| \Rc_{N,\Y_\sigma} (\Mbf^{N, \sigma}) \|$ is uniformly bounded in $N$ and $\sigma$, sending $N \to + \infty$ and $\sigma \to 0$, with the help of \ref{asmp_interp_sampling} to handle the boundary terms, we end up with
\begin{equation*}
\llangle \rho, \dr_t \phi \rrangle + \llangle \mbf, \nabla \phi \rrangle - \langle \rho_1, \phi(1, \cdot) \rangle + \langle \rho_0, \phi(0, \cdot) \rangle = 0. 
\end{equation*}
As $\phi$ is an arbitrary smooth function, it means that $(\rho, \mbf) \in \CE(\rho_0, \rho_1)$, that is it satisfies the continuity equation with boundary conditions $(\rho_0, \rho_1)$.

\medskip

\emph{Fourth step: passing to the limit the action}. In this abstract setting, this is simple. Even though \ref{asmp_interp_action_stronger} implies \ref{asmp_interp_action}, let us first show the result if \ref{asmp_interp_action_stronger} holds as it is a one-line estimate. Indeed, given \ref{asmp_interp_action_stronger} and the way $\Rc^{A}_{N, \X_\sigma}, \Rc_{N, \Y_\sigma}$ are defined, 
\begin{multline*}
\J^{N, \sigma}_{S_{\X_\sigma}(\rho_0), S_{\X_\sigma}(\rho_1)}( P^{N, \sigma}, \Mbf^{N, \sigma}  ) = \tau \sum_{k=1}^{N} A_\sigma \left( \frac{P^{N, \sigma}_{k-1} + P^{N, \sigma}_{k}}{2}, \Mbf^{N, \sigma}_k \right) \\ 
\geqslant \frac{1}{1+ \varepsilon_\sigma} \tau \sum_{k=1}^N A \left( R^{A}_{\sigma} \left( \frac{P^{N, \sigma}_{k-1} + P^{N, \sigma}_{k}}{2} \right), R_{\Y_\sigma}(\Mbf^{N, \sigma}_k)  \right) 
= \frac{1}{1+ \varepsilon_\sigma} \A( \Rc^{A}_{N, \X_\sigma}(P^{N, \sigma}), \Rc_{N, \Y_\sigma}(\Mbf^{N, \sigma}) ).
\end{multline*}
As the action $\A$ is lower semi-continuous on $\M([0,1] \times X) \times \M([0,1] \times TX)$, we can easily pass to the limit. 

On the other hand, let us just assume \ref{asmp_interp_action}. Let us fix $\eta > 0$, we take $\bbf \in C(^1[0,1] \times X, TX)$ smooth such that
\begin{equation*}
\A(\tilde{\rho}, \mbf) \leqslant \llangle \mbf, \bbf \rrangle - \A^\star( \tilde{\rho}, \bbf  ) + \eta.
\end{equation*} 
Now, by continuity of $\A^\star(\cdot, \bbf )$ (w.r.t. weak convergence) and the weak convergence of $\Rc_{N,\Y_\sigma}( \Mbf^{N, \sigma} )$ and $\Rc^A_{N, \X_\sigma}(P^{N, \sigma})$, one sees that 
\begin{multline*}
\llangle \mbf, \bbf \rrangle - \A^\star( \tilde{\rho}, \bbf  )
= \lim_{N \to + \infty, \sigma \to 0} \left(  \llangle \Rc_{N,\Y_\sigma}( \Mbf^{N, \sigma} ), \bbf \rrangle - \A^\star( \Rc^A_{N, \X_\sigma}(P^{N, \sigma}), \bbf  ) \right) \\
=  \lim_{N \to + \infty, \sigma \to 0}  \sum_{k=1}^N \int_{(k-1) \tau}^{k \tau} \left(  \langle \Mbf^{N, \sigma}_k, R_{\Y_\sigma}^\top ( \bbf(t, \cdot) ) \rangle - A^\star \left( R^A_{\X_\sigma} \left( \frac{P_{k-1}^{N, \sigma} + P_{k}^{N, \sigma}}{2} \right)  , \bbf(t, \cdot)  \right) \right) \,\ddr t. 
\end{multline*}
As $\{ \bbf(t, \cdot), \ t \in [0,1] \}$ is a bounded set of $C^1(X,TX)$, we can use \ref{asmp_interp_action} to write that
\begin{multline*}
\llangle \mbf,  \bbf \rrangle - \A^\star( \tilde{\rho}, \bbf  )  
\leqslant  \liminf_{N \to + \infty, \sigma \to 0}  \sum_{k=1}^N \int_{(k-1) \tau}^{k \tau} \Bigg( \langle \Mbf^{N, \sigma}_k, R_{\Y_\sigma}^\top ( \bbf(t, \cdot) ) \rangle   -  A^\star_\sigma \Bigg(  \frac{P_{k-1}^{N, \sigma} + P_{k}^{N, \sigma}}{2}  ,   R_{\Y_\sigma}^\top ( \bbf(t, \cdot))  \Bigg) \\  
+ \frac{\varepsilon_\sigma}{2} \| R^A_{\X_\sigma}(P^{N, \sigma}_{k-1}) + R^A_{\X_\sigma}(P^{N, \sigma}_k) \| \Bigg) \, \ddr t  
\end{multline*}
Then, using the definition of the Legendre transform, we conclude that
\begin{multline*}
\A(\tilde{\rho} , \mbf) - \eta \leqslant \liminf_{N \to + \infty, \sigma \to 0}  \left[ \left\{ \tau \sum_{k=1}^N  A_\sigma \left( \frac{P_{k-1}^{N, \sigma} + P_{k}^{N, \sigma}}{2}, \Mbf_k^{N, \sigma}  \right) \right\} + \varepsilon_\sigma \| \Rc^A_{N, \X_\sigma}(P^{N,\sigma}) \| \right]    \\ 
\leqslant \liminf_{N \to + \infty, \sigma \to 0}  \J_{S_{\X_\sigma}(\rho_0), S_{\X_\sigma}(\rho_1)}^{N, \sigma}(P^{N, \sigma}, \Mbf^{N, \sigma}). 
\end{multline*}
Notice that the error term has disappeared as $\| \Rc^A_{N, \X_\sigma}(P^{N,\sigma}) \|$ is uniformly bounded in $N$ and $\sigma$. As $\eta$ can be taken arbitrary small, we end up with
\begin{equation}
\label{equation_GammaLiminf_A}
\liminf_{N \to + \infty, \sigma \to 0} \J_{S_{\X_\sigma}(\rho_0), S_{\X_\sigma}(\rho_1)}^{N, \sigma}(P^{N, \sigma}, \Mbf^{N, \sigma}) \geqslant \A(\tilde{\rho}, \mbf).
\end{equation}
We have seen $\tilde{\rho} = \rho$ and $(\rho, \mbf) \in \CE(\rho_0, \rho_1)$: it makes the right hand side equal to $\J_{\rho_0, \rho_1}(\rho, \mbf)$. \qed

\begin{rmk}
As one can see in the proof of the first and third steps, in fact $(P^{N, \sigma}, \Mbf^{N, \sigma})$ do not need to satisfy exactly the discrete continuity equation, one could allow for some leeway. As pointed out in \cite[Remark 2]{Carrillo2019}, allowing for some leeway can help to speed up the numerical computation of a minimum of $\J^{N, \sigma}$. 
\end{rmk}

\subsection{Proof of Theorem \ref{theo_GammaLimsup}}

Such a proof is more involved as it relies on a careful regularization procedure, as well as the use of the controllability assumption \ref{asmp_controllability} to handle what is happening near the temporal boundaries. In the sequel, by an abuse of notations, we will identify a measure with its density w.r.t. the volume measure. Let us take $\rho_0, \rho_1$ sharing the same total mass and $(\rho, \mbf) \in \M([0,1] \times X) \times \M([0,1] \times TX)$ such that $\J_{\rho_0, \rho_1}(\rho, \mbf) < + \infty$. To prove our result, we will first regularize $(\rho, \mbf)$ and then sample via $S_{\X_\sigma}$ and $S_{\Y_\sigma}$. Because of the regularization we lose the temporal boundary conditions: this is to remedy to this problem that we use the assumption \ref{asmp_controllability}, see Proposition \ref{prop_controllability} below.

We recall that $W_2$ denotes the quadratic Wasserstein distance, see Appendix \ref{section_wasserstein}.

\medskip

\emph{First tool: regularization of $(\rho, \mbf)$}. The first tool is to regularize the continuous pair $(\rho, \mbf)$. This is object of the following proposition.  

\begin{prop}
\label{prop_regularization}
Let $(\rho, \mbf) \in \M([0,1] \times X) \times \M([0,1] \times TX)$ and $\rho_0, \rho_1 \in \M_+(X)$ such that $\J_{\rho_0, \rho_1}(\rho, \mbf) < + \infty$. For any $\eta > 0$, there exists $(\tilde{\rho}, \tilde{\mbf}) \in \M([0,1] \times X) \times \M([0,1] \times TX)$ such that: 
\begin{enumerate}
\item The densities of $\tilde{\rho}$ and $\tilde{\mbf}$ w.r.t. $\ddr t \otimes \ddr x$ (still denoted by $\tilde{\rho}$ and $\tilde{\mbf}$) are smooth in the sense that $\tilde{\rho} \in C^1([0,1] \times X)$ and is bounded from below uniformly on $[0,1] \times X$ by a strictly positive constant; while $\tilde{\mbf} \in C^1([0,1] \times X, TX)$. 
\item The pair $(\tilde{\rho}, \tilde{\mbf})$ belongs to $\CE(\tilde{\rho}(0, \cdot) , \tilde{\rho}(1, \cdot))$ and $W_2(\tilde{\rho}(0, \cdot) , \rho_0) \leqslant \eta$, as well as $W_2(\tilde{\rho}(1, \cdot), \rho_1) \leqslant \eta$. 
\item The following estimate on the action holds: 
\begin{equation*}
\A(\tilde{\rho}, \tilde{\mbf}) \leqslant (1+ \eta) \A(\rho, \mbf). 
\end{equation*}
\end{enumerate}
\end{prop}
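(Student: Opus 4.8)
The plan is to regularize $(\rho,\mbf)$ first in time and then in space, keeping track of the continuity equation, the action, and the displacement of the endpoints. By Remark \ref{rmk_rho_curve} I would view $(\rho,\mbf)$ as a weakly continuous (indeed $1/2$-Hölder for $W_2$) curve $t\mapsto(\rho_t,\mbf_t)$ with endpoints $\rho_0,\rho_1$, and I may assume the common mass $m=\|\rho_0\|$ is positive (if $m=0$ then $(\rho,\mbf)=(0,0)$ and the statement is vacuous).

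\emph{Time regularization.} Extend the curve to $t\in\R$ by freezing it, $(\rho_t,\mbf_t)=(\rho_0,0)$ for $t<0$ and $(\rho_1,0)$ for $t>1$; as $\rho$ is continuous in time this extension still solves the continuity equation and carries no extra action. Convolving in $t$ with a smooth kernel $\chi_\epsilon$ yields $(\rho^\epsilon,\mbf^\epsilon)$ smooth in time; since convolution in $t$ commutes with $\dr_t$ and with the spatial $\nabla\cdot$, the continuity equation is preserved, and joint convexity of $A$ together with Jensen's inequality gives $\A(\rho^\epsilon,\mbf^\epsilon)\leqslant\A(\rho,\mbf)$. The new endpoint $\rho^\epsilon_0$ is a $\chi_\epsilon$-average of the $\rho_s$ with $|s|\leqslant\epsilon$, so $W_2(\rho^\epsilon_0,\rho_0)=O(\sqrt\epsilon)$ by $1/2$-Hölder continuity, and similarly at $t=1$.

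\emph{Spatial regularization.} I would then apply the Neumann heat semigroup $P_r$ of $(X,g)$, setting $\tilde\rho_t=P_r\rho^\epsilon_t$ and $\tilde\mbf_t=P_r\mbf^\epsilon_t$, the latter being the divergence-commuting (Hodge) heat flow on vector measures so that $\nabla\cdot(P_r\mbf)=P_r(\nabla\cdot\mbf)$ and the continuity equation survives; the Neumann condition is exactly the no-flux condition $\tilde\mbf\cdot\nbf_{\dr X}=0$. For $r>0$ the heat kernel $p_r$ is smooth and strictly positive, so $\tilde\rho\in C^\infty$ and, since each $\rho^\epsilon_t$ has mass $m$, one has the uniform bound $\tilde\rho_t\geqslant m\min_{X\times X}p_r>0$; smoothness in $t$ is inherited because $P_r$ acts only in space. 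Finally $W_2(P_r\rho^\epsilon_0,\rho^\epsilon_0)=O(\sqrt r)$, so the endpoints of $\tilde\rho$ stay within $O(\sqrt\epsilon+\sqrt r)$ of $\rho_0,\rho_1$ in $W_2$.

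\emph{Action estimate (the crux).} By self-adjointness of $P_r$ the dual formula reads $A(\tilde\rho_t,\tilde\mbf_t)=\sup_\bbf(\langle\mbf^\epsilon_t,P_r\bbf\rangle-\tfrac12\langle\rho^\epsilon_t,P_r|\bbf|^2\rangle)$. Using $P_r\bbf$ as a competitor for $A(\rho^\epsilon_t,\mbf^\epsilon_t)$, the bound $A(\tilde\rho_t,\tilde\mbf_t)\leqslant(1+\varepsilon_r)A(\rho^\epsilon_t,\mbf^\epsilon_t)$ reduces to the pointwise inequality $|P_r\bbf|^2\leqslant(1+\varepsilon_r)P_r|\bbf|^2$ with $\varepsilon_r\to0$. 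I would obtain this from the semigroup domination (Kato-type) inequality $|P_r\bbf|\leqslant e^{-Kr}P_r|\bbf|$ for the Hodge heat flow on a manifold with $\mathrm{Ric}\geqslant K$, combined with Jensen's inequality $(P_r|\bbf|)^2\leqslant P_r|\bbf|^2$ for the Markov operator $P_r$, which gives $\varepsilon_r=e^{2|K|r}-1$. It is precisely here that the convexity of $\dr X$ enters: it makes the boundary term in the underlying Bochner/Reilly formula have the favourable sign, so that the domination estimate holds up to the boundary under Neumann conditions. Integrating in $t$ yields $\A(\tilde\rho,\tilde\mbf)\leqslant(1+\varepsilon_r)\A(\rho,\mbf)$, and choosing first $r$ then $\epsilon$ small enough that $\varepsilon_r\leqslant\eta$ and both $W_2$-defects are $\leqslant\eta$ completes the proof. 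I expect this spatial action estimate — reconciling the divergence-commuting heat flow on the momentum with a one-sided comparison to the scalar heat flow, uniformly up to a convex boundary — to be the main obstacle, the remaining bookkeeping (positivity, preservation of the continuity equation, endpoint displacement) being comparatively routine.
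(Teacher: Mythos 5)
Your proposal is essentially correct in outline and shares the paper's skeleton (freeze the curve outside $[0,1]$, mollify in time, apply the Neumann heat flow in space, control the action by a Bakry--\'Emery-type estimate, and use $W_2(P_r\mu,\mu)=O(\sqrt r)$ plus H\"older continuity for the endpoints), but it diverges on the one genuinely delicate point: how the momentum gets regularized. You push $\mbf$ through a vector-valued (Hodge) heat semigroup chosen to intertwine with the divergence, which forces you to prove the semigroup domination inequality $|P_r\bbf|\leqslant e^{Cr}P_r|\bbf|$ for \emph{arbitrary} continuous vector fields $\bbf$ (the test fields in the dual formula for $A(\rho^\epsilon_t,\mbf^\epsilon_t)$ are not gradients), with boundary conditions that simultaneously give $\nabla\cdot(P_r\mbf)=P_r(\nabla\cdot\mbf)$, preserve no-flux, and remain valid up to a convex boundary --- this is the Hess--Schrader--Uhlenbrock/Shigekawa circle of results with absolute boundary conditions, and it is strictly stronger than the scalar gradient estimate. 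The paper avoids the vector heat flow entirely: it regularizes only the density, by duality on test functions (time convolution composed with the scalar Neumann heat flow), uses the dual formulation of the \emph{optimal} action $\A_{\rho_0,\rho_1}(\rho)$ (Lemma \ref{lemma_dual_action}), in which only gradients $\nabla\phi$ appear so that the scalar Bakry--\'Emery estimate of Theorem \ref{theorem_bakry_emery} suffices, and then manufactures a smooth momentum \emph{a posteriori} by solving the elliptic equation $\nabla\cdot(\tilde\rho_n\nabla\psi_n)=-\dr_t\tilde\rho_n$ with Neumann conditions. What your route buys is a $\tilde\mbf$ that actually converges to $\mbf$; what the paper's route buys is that all the analysis stays at the level of scalar functions and a single elliptic solve (at the price that $\tilde\mbf_n$ need not approximate $\mbf$, cf.\ Remark \ref{rmk_m_not_optimal} --- which is harmless for the use made of the proposition). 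If you pursue your version, the burden you correctly flag as ``the main obstacle'' must be discharged with precise statements about the Hodge heat semigroup on a compact manifold with convex boundary; everything else in your argument (Jensen for the time convolution, positivity of the heat kernel, the endpoint estimates) is sound.
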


\noindent Though we have not found this result phrased like this in the literature, closely related ones are available. As it can be considered as standard and it is not the core of the argument, we delay the proof of such a result until Appendix \ref{section_regularization_curves}. Notice that we have not mentioned in the Proposition whether $\tilde{\rho}$ and $\tilde{\mbf}$ are closed to $\rho$ and $\mbf$ as it is irrelevant for the rest of the analysis, more is said in Remark \ref{rmk_m_not_optimal}.

\medskip

\emph{Second tool: controllability}. The issue with the regularization procedure is that it does not keep the boundary values. Assumption \ref{asmp_controllability} will help us to do a little surgery near the temporal boundaries.

\begin{prop}
\label{prop_controllability}
There exists a continuous function $\hat{\omega} : [0, + \infty) \to [0, + \infty)$ independent on $N$ and $\sigma$, such that $\hat{\omega}(0) = 0$, and an error $\varepsilon_{N, \sigma}$ depending only on $N, \sigma$ and going to $0$ in the limit $N \to + \infty, \sigma \to 0$ such that for any $\rho_0, \rho_1 \in \M_+(X)$ sharing the same total mass, there holds
\begin{equation*}
\min_{(\X_\sigma)^{N+1} \times (\Y_\sigma)^N} \J^{N, \sigma}_{S_{\X_\sigma}(\rho_0), S_{\X_\sigma}(\rho_1)} \ \leqslant  \hat{\omega}(W_2(\rho_0, \rho_1)) + \varepsilon_{N, \sigma} \| \rho_0 \|.  
\end{equation*}
\end{prop}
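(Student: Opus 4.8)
The plan is to realize the discrete competitor by first building a discrete ``midpoint'' density and then interpolating in time with a carefully chosen profile. First I would fix an optimal coupling $\gamma\in\Prob(X\times X)$ between $\rho_0$ and $\rho_1$, so that $\gamma$ has marginals $\rho_0,\rho_1$, total mass $\|\rho_0\|$, and $\int d_g(x,y)^2\,\gamma(dx,dy)=W_2^2(\rho_0,\rho_1)$. For each pair $(x,y)$ assumption \ref{asmp_controllability} furnishes $\hat{P}_{x,y}\in\X_{\sigma,+}$ and $\hat{\Mbf}_1^{x,y},\hat{\Mbf}_2^{x,y}\in\Y_\sigma$ with $\Div_\sigma(\hat{\Mbf}_1^{x,y})=\hat{P}_{x,y}-S_{\X_\sigma}(\delta_x)$, $\Div_\sigma(\hat{\Mbf}_2^{x,y})=\hat{P}_{x,y}-S_{\X_\sigma}(\delta_y)$ and $A_\sigma(\hat{P}_{x,y},\hat{\Mbf}_i^{x,y})\leqslant\omega(d_g(x,y))+\varepsilon_\sigma$. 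After choosing a measurable selection of these (via a measurable selection theorem, the selection set being closed nonempty and depending measurably on $(x,y)$), I would integrate against $\gamma$ and set $\hat{P}=\int\hat{P}_{x,y}\,\gamma$, $\hat{\Mbf}_i=\int\hat{\Mbf}_i^{x,y}\,\gamma$. Linearity of $\Div_\sigma$ and $S_{\X_\sigma}$ gives $\Div_\sigma(\hat{\Mbf}_1)=\hat{P}-S_{\X_\sigma}(\rho_0)$ and $\Div_\sigma(\hat{\Mbf}_2)=\hat{P}-S_{\X_\sigma}(\rho_1)$, while $\hat{P}\in\X_{\sigma,+}$ since this set is a closed convex cone. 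Crucially, $A_\sigma$ is jointly $1$-homogeneous (as it is $(-1)$-homogeneous in the first and $2$-homogeneous in the second variable) and jointly convex, so Jensen's inequality applied to the normalized plan yields $A_\sigma(\hat{P},\hat{\Mbf}_i)\leqslant\int A_\sigma(\hat{P}_{x,y},\hat{\Mbf}_i^{x,y})\,\gamma\leqslant\int\omega(d_g(x,y))\,\gamma+\varepsilon_\sigma\|\rho_0\|$.

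Next I would interpolate in time through $\hat{P}$, spending the first half of $[0,1]$ going from $S_{\X_\sigma}(\rho_0)$ to $\hat{P}$ and the second half going from $\hat{P}$ to $S_{\X_\sigma}(\rho_1)$. Writing $N=2M$, on the first $M$ steps I would set $P_k=(1-f_k)S_{\X_\sigma}(\rho_0)+f_k\hat{P}$ with $f_k=(k/M)^2$ (so that $\sqrt{f}$ is affine, $f_0=0$, $f_M=1$) and take the momentum $\Mbf_k=-\tau^{-1}(f_k-f_{k-1})\hat{\Mbf}_1$, which solves the discrete continuity equation since $P_k-P_{k-1}=(f_k-f_{k-1})\Div_\sigma(\hat{\Mbf}_1)$; the second half is the mirror image with $\hat{\Mbf}_2$. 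Each $P_k\in\X_{\sigma,+}$ by convexity of the cone. To estimate the cost I would use that the averaged density $\tfrac{P_{k-1}+P_k}{2}=\bar f_k\hat{P}+(1-\bar f_k)S_{\X_\sigma}(\rho_0)$ (with $\bar f_k=\tfrac{f_{k-1}+f_k}{2}$) dominates $\bar f_k\hat{P}$, so the monotonicity of $A_\sigma$ on $\X_{\sigma,+}$ together with the two homogeneities gives $A_\sigma\!\big(\tfrac{P_{k-1}+P_k}{2},\Mbf_k\big)\leqslant\bar f_k^{-1}\tau^{-2}(f_k-f_{k-1})^2\,A_\sigma(\hat{P},\hat{\Mbf}_1)$. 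Summing, $\tau\sum_k A_\sigma(\cdots)\leqslant A_\sigma(\hat{P},\hat{\Mbf}_1)\,\tau^{-1}\sum_k\frac{(f_k-f_{k-1})^2}{\bar f_k}$, and the choice $f_k=(k/M)^2$ makes the last factor bounded by a universal constant $C_0$ uniformly in $M$ (it is a Riemann sum for $\int_0^{1/2}(\dot f)^2/f\,dt$, finite precisely because $\sqrt{f}$ is affine). Hence the total discrete action is at most $C_0\big(A_\sigma(\hat{P},\hat{\Mbf}_1)+A_\sigma(\hat{P},\hat{\Mbf}_2)\big)\leqslant 2C_0\big(\int\omega(d_g)\,\gamma+\varepsilon_\sigma\|\rho_0\|\big)$.

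It then remains to define $\hat{\omega}$ and $\varepsilon_{N,\sigma}$. I would set $\varepsilon_{N,\sigma}=2C_0\varepsilon_\sigma$ and absorb the main term into $\hat{\omega}$ by putting, with $m=\|\rho_0\|$ fixed, $\hat{\omega}(r)=2C_0\,m\,\Psi(r^2/m)$ where $\Psi(s)=\sup\{\int\omega(d_g)\,\bar\gamma:\bar\gamma\in\Prob(X\times X),\ \int d_g^2\,\bar\gamma\leqslant s\}$; rescaling the optimal $\gamma$ to a probability measure shows $2C_0\int\omega(d_g)\,\gamma\leqslant\hat{\omega}(W_2(\rho_0,\rho_1))$. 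The quantity $\Psi$ is finite and nondecreasing because $X$ is compact and $\omega$ is bounded; moreover $\Psi(s)\to0$ as $s\to0$ by a Markov-inequality split of $\{d_g<\delta\}$ and $\{d_g\geqslant\delta\}$ using the continuity of $\omega$ at $0$, which gives $\hat{\omega}(0)=0$, and one may dominate $\Psi$ by a continuous majorant to make $\hat{\omega}$ continuous. (In the model case $\omega(r)=r^2/2$ this is transparent, since then $\int\omega(d_g)\,\gamma=\tfrac12 W_2^2(\rho_0,\rho_1)$ with no residual mass factor.)

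The main obstacle I expect is the time discretization itself: a naive affine-in-time interpolation makes the per-step estimate $\sum_k\frac{(f_k-f_{k-1})^2}{\bar f_k}$ diverge like a harmonic sum, while collapsing the transition into a single step makes the momentum scale like $\tau^{-1}$ and the action blow up like $N$; the square-root profile is exactly what balances these two effects and yields a bound uniform in $N$, and this is where the time-averaging of the density in \eqref{equation_discrete_cost} and the monotonicity of $A_\sigma$ on $\X_{\sigma,+}$ are indispensable. Two further points require care: the measurable selection needed to integrate the controllability data against $\gamma$, and the single-interval case $N=1$, where no intermediate time slice is available to host $\hat{P}$ and the forced averaged density $\tfrac12(S_{\X_\sigma}(\rho_0)+S_{\X_\sigma}(\rho_1))$ carries no mass along the connecting geodesics; the construction above applies for $N\geqslant2$ with a constant independent of $N$, which is the regime used when passing to the limit in Theorem \ref{theo_GammaLimsup}.
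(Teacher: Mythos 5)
Your proof is correct and follows essentially the same strategy as the paper's: reduce to Dirac masses via Assumption \ref{asmp_controllability}, superpose over an optimal plan using the convexity and joint $1$-homogeneity of $A_\sigma$, and interpolate in time with the quadratic profile (i.e.\ $\sqrt{f}$ affine) so that $\sum_k (f_k-f_{k-1})^2/(\tau \bar f_k)$ stays bounded uniformly in $N$, relying crucially on the monotonicity of $A_\sigma$ on $\X_{\sigma,+}$ and on the time-averaging of the density in \eqref{equation_discrete_cost}. The only (immaterial) difference is the order of operations --- the paper builds a discrete curve for each pair of Dirac masses and then integrates the curves against the plan, whereas you integrate the controllability data first and interpolate once --- and the measurable-selection and small-$N$ caveats you flag are present, though left implicit, in the paper's argument as well.
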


\noindent Here the key point is that $\rho_0, \rho_1$ are \emph{arbitrary} positive measures and there is no condition on the ratio between the temporal and spatial step sizes. 

\begin{proof}
For simplicity (the only effect would be the appearance of errors of order $\tau$ which can be absorbed in $\varepsilon_{N, \sigma}$), we assume that $N$ is even. Let us also fix $\sigma > 0$.

We first consider the case where $\rho_0 = \delta_x$ and $\rho_1 = \delta_y$ with $x,y \in X$. Given assumption \ref{asmp_controllability}, there exists $\hat{\Mbf}_1, \hat{\Mbf}_2 \in \Y_\sigma$ and $\hat{P} \in \X_{\sigma,+}$ satisfying \eqref{equation_controllability}. Now let us take $\chi : [0,1] \to [0,1]$ the continuous function defined by 
\begin{equation*}
\chi(t) = \begin{cases}
4 t^2 & \text{if } t \leqslant 1/2, \\ 
4 (1-t)^2 & \text{if } t \geqslant 1/2.
\end{cases}
\end{equation*}
In particular, and it was chosen for that, $\int_0^1 \dot{\chi}^2/\chi < + \infty$ while $\chi(0) = \chi(1) = 0$ and $\chi(1/2) = 1$. We define $((P_k)_{0 \leqslant k \leqslant N}, (\Mbf_k)_{1 \leqslant k \leqslant N}) \in \X_\sigma^{N+1} \times \Y_\sigma^N$ by 
\begin{multline*}
P_k := \begin{cases} 
(1 - \chi(k \tau)) S_{\X_\sigma}(\delta_x) + \chi(k \tau) \hat{P} & \text{if } k \leqslant N/2, \\
(1-\chi(k \tau)) S_{\X_\sigma}(\delta_y) + \chi(k \tau) \hat{P} & \text{if } k \geqslant N/2, \\
\end{cases} \\
\text{ and } \Mbf_k = 
\begin{cases}
\dst{ \frac{ ( \chi(k \tau) - \chi((k-1) \tau))  }{\tau}} \hat{\Mbf}_1  & \text{if } k \leqslant N/2, \\
\dst{ \frac{ ( \chi(k \tau) - \chi((k-1) \tau))  }{\tau}} \hat{\Mbf}_2 & \text{if } k > N/2.
\end{cases}
\end{multline*}   
Notice that, for $k \leqslant N/2$, $P_k$ is just a convex combination of $S_{\X_\sigma}(\delta_x)$ and $\hat{P}$, while all the $\Mbf_k$ are proportional to $\hat{\Mbf}_1$; a similar situation occurs for $k \geqslant N/2$. By construction and in particular thanks to \eqref{equation_controllability}, it is quite straightforward to see that $((P_k)_{0 \leqslant k \leqslant N}, (\Mbf_k)_{1 \leqslant k \leqslant N})$ satisfies the discrete continuity equation. We need to estimate the action: for the sake of the exposition, let us just estimate on the part $k \leqslant N/2$, the other part being completely symmetric. We use first that $S_{\X_\sigma}(\delta_x) \in \X_{\sigma,+}$ and the action only decreases if we add an element of $\X_{\sigma,+}$ to its first variable, and then the $(-1,2)$ homogeneity of the action $A_\sigma$:
\begin{align*}
& \A_{N, \sigma} ((P_k)_{0 \leqslant k \leqslant N/2}, (\Mbf_k)_{1 \leqslant k \leqslant N/2}) 
\\ 
& = \tau \sum_{k=1}^{N/2} A_\sigma \left( \left[ 1 - \frac{\chi((k-1) \tau) + \chi(k \tau)}{2} \right] S_{\X_\sigma}(\delta_x) + \frac{\chi((k-1) \tau) + \chi(k \tau)}{2} \hat{P}, \frac{ ( \chi(k \tau) - \chi((k-1) \tau))  }{\tau} \hat{\Mbf}_1  \right) \\
& \leqslant  \tau \sum_{k=1}^{N/2} A_\sigma \left( \frac{\chi((k-1) \tau) + \chi(k \tau)}{2} \hat{P}, \frac{ ( \chi(k \tau) - \chi((k-1) \tau))  }{\tau} \hat{\Mbf}_1  \right) \\
& =  A_\sigma(\hat{P}, \hat{\Mbf}_1) \sum_{k=1}^{N/2} \frac{(\chi(k \tau) - \chi((k-1) \tau))^2}{\tau (\chi(k \tau) + \chi((k-1) \tau))}.
\end{align*} 
By assumption we can control $A_\sigma(\hat{P}, \Mbf_1)$ by $\omega(d_g(x,y)) + \varepsilon_\sigma$. About the remaining sum, notice that it is nothing else than a discretization of  $\int_0^1 \dot{\chi}^2/\chi$ which is finite. More precisely given the explicit expression of $\chi$, 
\begin{equation*}
\frac{(\chi(k \tau) - \chi((k-1) \tau))^2}{\tau (\chi(k \tau) + \chi((k-1) \tau))} = 4 \frac{\tau (2k-1)^2}{k^2 + (k-1)^2} \leqslant 16 \tau
\end{equation*}
hence the sum will be bounded independently on $N$. \review{The reader can take a look back at Remark \ref{rmk_averaging_needed} and check that in the present computation it is crucial that the denominator is $\chi(k \tau) + \chi((k-1) \tau)$ and is non zero if $k=1$.} 

A similar computation can be performed exactly in the same way for $k \geqslant N/2$. In short the sum is bounded independently on $N$, which translates in  
\begin{equation*}
\J^{N, \sigma}_{S_{\X_\sigma}(\delta_x), S_{\X_\sigma}(\delta_y)}(P, \Mbf) \leqslant C \omega(d_g(x,y)) + \varepsilon_{N,\sigma}. 
\end{equation*}
and it is exactly what we wanted to prove. 

Next, let us consider the general case: we take $\rho_0, \rho_1 \in \M(X)$ with the same total mass. Let $\pi \in \M(X \times X)$ an optimal transport plan between them (see Appendix \ref{section_wasserstein}). For each $(x,y) \in X \times X$, we consider the pair $(P^{xy}, \Mbf^{xy})$ built as above which has an energy $\J^{N, \sigma}_{S_{\X_\sigma}(\delta_x), S_{\X_\sigma}(\delta_y)}(P^{xy}, \Mbf^{xy})$ bounded by $C \omega(d_g(x,y)) + \varepsilon_{N,\sigma}$. Then we simply set 
\begin{equation*}
(P, \Mbf) := \iint_{X \times X} (P^{xy}, \Mbf^{xy}) \, \pi(\ddr x, \ddr y) \in (\X^+_\sigma)^{N+1} \times (\Y_\sigma)^N. 
\end{equation*}
By linearity of the discrete continuity equation, the latter is still satisfied with boundary conditions $(S_{\X_\sigma}(\rho_0), S_{\X_\sigma}(\rho_1))$ as $\pi$ has appropriate marginals. Moreover, by convexity of the action $A_\sigma$ and its $1$-homogeneity, 
\begin{multline*}
\J^{N, \sigma}_{S_{\X_\sigma}(\rho_0), S_{\X_\sigma}(\rho_1)}(P, \Mbf) \leqslant \iint_{X \times X} \J^{N, \sigma}_{S_{\X_\sigma}(\delta_x), S_{\X_\sigma}(\delta_y)}(P^{xy}, \Mbf^{xy}) \, \pi(\ddr x, \ddr y) \\ 
\leqslant \iint_{X \times X} ( \omega( d_g(x,y) ) + \varepsilon_{N, \sigma}) \, \pi(\ddr x, \ddr y).  
\end{multline*}
Now, as $\omega$ is continuous and $\omega(0) = 0$, it is clear that if $\iint d_g(x,y)^2 \, \pi(\ddr x, \ddr y)$ goes to $0$, so does $\iint_{X \times X} \omega( d_g(x,y) ) \, \pi(\ddr x, \ddr y)$, hence the right hand side can be written $\hat{\omega}(W_2(\rho_0, \rho_1))$ with $\hat{\omega}$ continuous and $\hat{\omega}(0) = 0$, up to an error $\varepsilon_{N, \sigma} \| \pi \| = \varepsilon_{N, \sigma} \| \rho_0 \|$.
\end{proof}

\begin{figure}
\begin{center}
\begin{tikzpicture}[scale = 1]

\draw[line width = 1pt] (0,0) -- (14,0) ;
\fill [color = black] (0,0)  circle (0.1)  ;
\fill [color = black] (14,0)  circle (0.1)  ;

\draw (3,0) node[above]{$(\rho, \mbf)$} ; 
\draw (0,0) node[left]{$\rho_0$} ; 
\draw (14,0) node[right]{$\rho_1$} ;


\draw [->, line width = 1pt] (4, -0.3) -- (4,-1.7) ;
\draw (4.3,-1) node[right]{\makecell[l]{\small{Regularization}\\~~\small{(Proposition \ref{prop_regularization})}}} ;
\draw (3,-2) node[above]{($\tilde{\rho}, \tilde{\mbf})$} ;

\draw[line width = 1pt] (0,-2) -- (14,-2) ;
\fill [color = black] (0.1,-1.9)  rectangle (-0.1, -2.1)  ;
\fill [color = black] (14.1,-1.9)  rectangle (13.9, -2.1)  ;

\draw (0,-2) node[left]{$\tilde{\rho}_0$} ; 
\draw (14,-2) node[right]{$\tilde{\rho}_1$} ;


\draw [->, line width = 1pt] (4, -2.3) -- (4,-3.7) ;
\draw (4.3,-3) node[right]{\small{\review{Squeezing}}} ;

\draw[line width = 1pt] (2,-4) -- (12,-4) ;
\fill [color = black] (2.1,-3.9)  rectangle (1.9, -4.1)  ;
\fill [color = black] (12.1,-3.9)  rectangle (11.9, -4.1)  ;

\draw (2,-4) node[above]{$\tilde{\rho}_0$} ; 
\draw (12,-4) node[above]{$\tilde{\rho}_1$} ;

\draw[dotted, line width = 1pt] (0,-4) -- (2,-4) ;
\draw[dotted, line width = 1pt] (11,-4) -- (14,-4) ;

\fill [color = black] (0,-4)  circle (0.1)  ;
\fill [color = black] (14,-4)  circle (0.1)  ;

\draw (0,-3.9) node[left]{$\rho_0$} ; 
\draw (14,-3.9) node[right]{$\rho_1$} ;


\fill [color = black] (0,-6)  circle (0.1)  ;
\fill [color = black] (14,-6)  circle (0.1)  ;
\foreach \k in {1,2,...,27}
	{\fill [color = black] ({\k*0.5},-6)  circle (0.05)  ;}

\draw (0,-6) node[left]{$S_{\X_\sigma}(\rho_0)$} ; 
\draw (14,-6) node[right]{$S_{\X_\sigma}(\rho_1)$} ;

\fill [color = black] (2.1,-5.9)  rectangle (1.9, -6.1)  ;
\fill [color = black] (12.1,-5.9)  rectangle (11.9, -6.1)  ;

\draw (2,-6.1) node[below]{$S_{\X_\sigma}(\tilde{\rho}_0)$} ; 
\draw (12,-6.1) node[below]{$S_{\X_\sigma}(\tilde{\rho}_1)$} ;

\draw [->, line width = 1pt] (0.3, -4.3) -- (0.3,-5.7) ;
\draw (0.4,-5) node[right]{\makecell[l]{\small{Controllability}\\~~\small{(Proposition \ref{prop_controllability})}}} ;

\draw [->, line width = 1pt] (12.3, -4.3) -- (12.3,-5.7) ;
\draw (12.4,-5) node[right]{\makecell[l]{\small{Controllability}\\~~\small{(Proposition \ref{prop_controllability})}}} ;


\draw [->, line width = 1pt] (5, -4.3) -- (5,-5.7) ;
\draw (5.1,-5) node[right]{\makecell[l]{\small{Sampling}\\~~\small{(using Assumptions \ref{asmp_samp_der} and \ref{asmp_samp_actio})}}} ;

\end{tikzpicture}
\caption{Outline of the proof of Theorem \ref{theo_GammaLimsup}. Given a pair $(\rho, \mbf)$ we regularize it, then squeeze it into a shorter time interval, and we use the controllability property to adjust the temporal boundary conditions while sampling in the interior is easy thanks to \ref{asmp_samp_der} and \ref{asmp_samp_actio} as everything is regular.}
\label{figure_sketch_proof}
\end{center}
\end{figure}
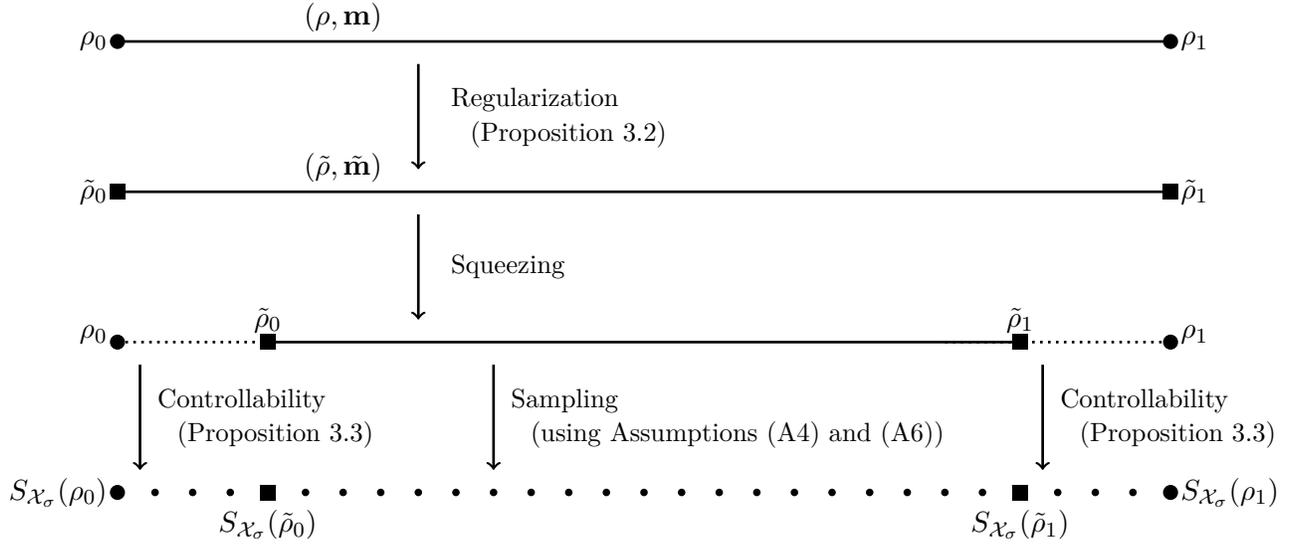

\emph{Proof of Theorem \ref{theo_GammaLimsup}}. We have now all the tools to prove the desired result. The idea is simple: we regularize the curve, use controllability to adjust the temporal endpoints, while in the interior we can sample quite easily as we have something regular. Figure \ref{figure_sketch_proof} represents graphically how we proceed. More specifically, we take $(\rho, \mbf) \in \M([0,1] \times X) \times \M([0,1] \times TX)$ a given pair such that $\J_{\rho_0, \rho_1}(\rho, \mbf) < + \infty$. Let us fix $T > 0$ and $\eta > 0$. By simplicity, we will assume that $N$ is always chosen such that $T$ is a multiple of $\tau = 1/N$. 

Let $(\tilde{\rho}, \tilde{\mbf})$ the regularized curved given by Proposition \ref{prop_regularization}. \review{In particular, $\tilde{\mbf}$ satisfies the no-flux boundary conditions.} On the other hand, let $(\hat{P}^{N, \sigma}, \hat{\Mbf}^{N, \sigma}) \in \X_\sigma^{T/\tau +1} \times \Y_\sigma^{T/\tau}$ the curve joining $S_{\X_\sigma}(\rho_0)$ onto $S_{\X_\sigma}(\tilde{\rho}_0)$ in $T/ \tau$ time steps with a controlled cost given by Proposition \ref{prop_controllability}. Similarly with $(\bar{P}^{N, \sigma}, \bar{\Mbf}^{N, \sigma}) \in \X_\sigma^{T/\tau +1} \times \Y_\sigma^{T/\tau}$ joining $S_{\X_\sigma}(\tilde{\rho}_1)$ onto $S_{\X_\sigma}(\rho_1)$ in $T/ \tau$ time steps with a controlled cost. 

\review{For $k$ such that $k \tau \in [T, 1-T]$ we define the affine rescaling $t_k = T + (k \tau - T)/(1-2T)$ such that $t_{T/\tau} = 0$ and $t_{(1-T)/\tau} = 1$.}

Eventually, we set, for $N, \sigma$ and $k \in \{ 0,1, \ldots, N \}$ given 
\begin{equation*}
P^{N, \sigma}_k := \begin{cases}
\hat{P}^{N, \sigma}_k & \text{if } k \tau \in [0,T] \\
\dst{S_{\X_\sigma} \left( \tilde{\rho}\left(  \review{t_k} , \cdot \right)  \right)} & \text{if } k \tau \in [T, 1-T] \\
\bar{P}^{N, \sigma}_{k - (1-T)/\tau} & \text{if } k \tau \in [1-T,T].
\end{cases}
\end{equation*} 
We chose the momentum accordingly: if $k \in \{ 1,2, \ldots, N \}$, 
\begin{equation*}
\Mbf^{N, \sigma}_k := \begin{cases}
T^{-1}\hat{\Mbf}^{N, \sigma}_k & \text{if } (k+1/2) \tau \in [0,T] \\
\review{ \dst{ \tau^{-1} S_{\Y_\sigma} \left(  \int_{t_{k-1}}^{t_k}  \tilde{\mbf}\left( t , \cdot \right) \, \ddr t  \right)} } & \text{if } (k + 1/2) \tau \in [T, 1-T] \\
T^{-1} \bar{\Mbf}^{N, \sigma}_{k  - (1-T)/\tau}  & \text{if } (k+1/2) \tau \in [1-T,T].
\end{cases}
\end{equation*} 
Notice that we have inserted a factor $1/T$ in the momentum $\hat{\Mbf}^{N, \sigma}$ and $\bar{\Mbf}^{N, \sigma}$ because of the difference of temporal scaling.
With our choice, $(P^{N, \sigma}, \Mbf^{N, \sigma})$ satisfies the discrete continuity equation. Indeed, it is clear if $k \tau \notin [T, 1-T]$  as $\hat{\Mbf}^{N, \sigma}$, $\bar{\Mbf}^{N, \sigma}$ are chosen for that. On the other hand, for $k$ such that \review{$(k-1) \tau$ and $k\tau$ are in $[T, 1-T]$},
\begin{multline*}
P^{N, \sigma}_{k} - P^{N, \sigma}_{k-1} = S_{\X_\sigma} \left( \int_{t_{k-1}}^{t_k} \dr_t \tilde{\rho}(t, \cdot) \, \ddr t \right) = S_{\X_\sigma} \left(-  \int_{t_{k-1}}^{t_k} \nabla \cdot \tilde{\mbf}(t, \cdot) \, \ddr t \right) \\
= - (\Div_\sigma \circ S_{\Y_\sigma}) \left( \int_{t_{k-1}}^{t_k} \tilde{\mbf}(t, \cdot) \, \ddr t \right) =  - \tau \Div_\sigma (\Mbf^{N, \sigma}_k),
\end{multline*}  
where the third identity comes from \ref{asmp_samp_der} and the last one is \review{the definition of $\Mbf^{N, \sigma}_k$}.

It remains to estimate the action of $(P^{N, \sigma}, \Mbf^{N, \sigma})$. Given the way we have built it (see Proposition \ref{prop_controllability}), and taking in account the change in the temporal scaling, the contribution of the action for $k \tau \notin [T, 1-T]$ does not exceed $T^{-1}(\hat{\omega}(\eta) + \varepsilon_{N, \sigma} \| \rho_0 \|)$ (recall that $W_2(\rho_0, \tilde{\rho}(0, \cdot)) \leqslant \eta$ and similarly for the final value). On the other hand, for the part on $[T, 1-T]$ we will use the consistency property \ref{asmp_samp_actio}. As $\tilde{\rho}$ and $\tilde{\mbf}$ are smooth (they are $C^1$ and $\tilde{\rho}$ is uniformly bounded from below) we can write, up to an error $\varepsilon_\sigma$ which depends only on $\sigma$ and which tends to $0$ as $\sigma \to 0$, that for all $N, \sigma, k$, 
\begin{equation*}
 A_\sigma \left( \frac{P^{N, \sigma}_{k-1} + P^{N, \sigma}_{k}}{2}, \Mbf^{N, \sigma}_k \right) \leqslant A \left( \frac{\tilde{\rho}(t_{k-1}, \cdot) + \tilde{\rho}(t_k, \cdot)}{2}, \frac{1}{\tau} \int_{t_{k-1}}^{t_k} \tilde{\mbf}(t, \cdot) \, \ddr t \right)  + \varepsilon_\sigma
\end{equation*} 
On the other hand, as the action $A$ depends smoothly of its input when restricted to the set of measures which have a smooth density, and as $\tilde{\rho}(t, \cdot)$ and $\tilde{\mbf}(t, \cdot)$ depend smoothly on the temporal variable, 
\begin{equation*}
\left| A \left( \frac{\tilde{\rho}(t_{k-1}, \cdot) + \tilde{\rho}(t_k, \cdot)}{2}, \frac{1}{\tau} \int_{t_{k-1}}^{t_k} \tilde{\mbf}(t, \cdot) \, \ddr t \right)  - \frac{1}{\tau(1-2T)} \int_{t_{k-1}}^{t_k} A(\tilde{\rho}(t, \cdot), \tilde{\mbf}(t, \cdot)) \, \ddr t  \right| \leqslant \frac{C}{N^2},
\end{equation*}
where $C$ depends on $\tilde{\rho}$ and $\tilde{\mbf}$, but not on $N$. \review{Notice that we have used $t_{k} - t_{k-1} = \tau/(1 - 2T)$, but also that $A$ is $2$-homogeneous in its second variable.} Hence, summing over $k$, one can see that 
\begin{equation*}
\sum_{k \ : \ k \tau \in [T, 1-2T]} \tau A_\sigma \left( \frac{P^{N, \sigma}_{k-1} + P^{N, \sigma}_{k}}{2}, \Mbf^{N, \sigma}_k \right) \leqslant \frac{1}{1-2T} \A(\tilde{\rho}, \tilde{\mbf}) + \varepsilon_\sigma + \frac{C}{N}.
\end{equation*}
Now, given that $\A(\tilde{\rho}, \tilde{\mbf}) \leqslant (1 + \eta)\A(\rho, \mbf)$ by construction, putting all these information together,
\begin{equation*}
\limsup_{N \to + \infty, \sigma \to 0} \ \J^{N, \sigma}_{S_{\X_\sigma}(\rho_0), S_{\X_\sigma}(\rho_1)}(P^{N, \sigma}, \Mbf^{N, \sigma}) \leqslant \frac{2}{T} \hat{\omega}(\eta) + \frac{1+ \eta}{1- 2T} \A(\rho, \mbf).   
\end{equation*}
We recall that $\eta$ and $T$ are arbitrary. If we choose first $T$ very small, and then $\eta$ small enough, we can see that we reach the desired conclusion. \qed

\section{On the relation with already proposed discretizations}
\label{section_examples}

Now that we have proved Theorems \ref{theo_GammaLiminf} and \ref{theo_GammaLimsup}, to justify the interest of our result, we will show how previous works can be embedded in this framework. This is the case for the the discretization on triangulations of surfaces used by the present author in \cite{Lavenant2018} with Claici, Chien and Solomon, and our general framework was mainly designed for this case. Moreover, we show that it can also be used to analyze the finite volume discretization proposed by Gladbach, Kopfer and Maas \cite{Gladbach2018}.

\subsection{Triangulations of surfaces}
\label{subsection_finite_elements}

\begin{figure}
\begin{center}
\includegraphics[width = 0.7 \textwidth]{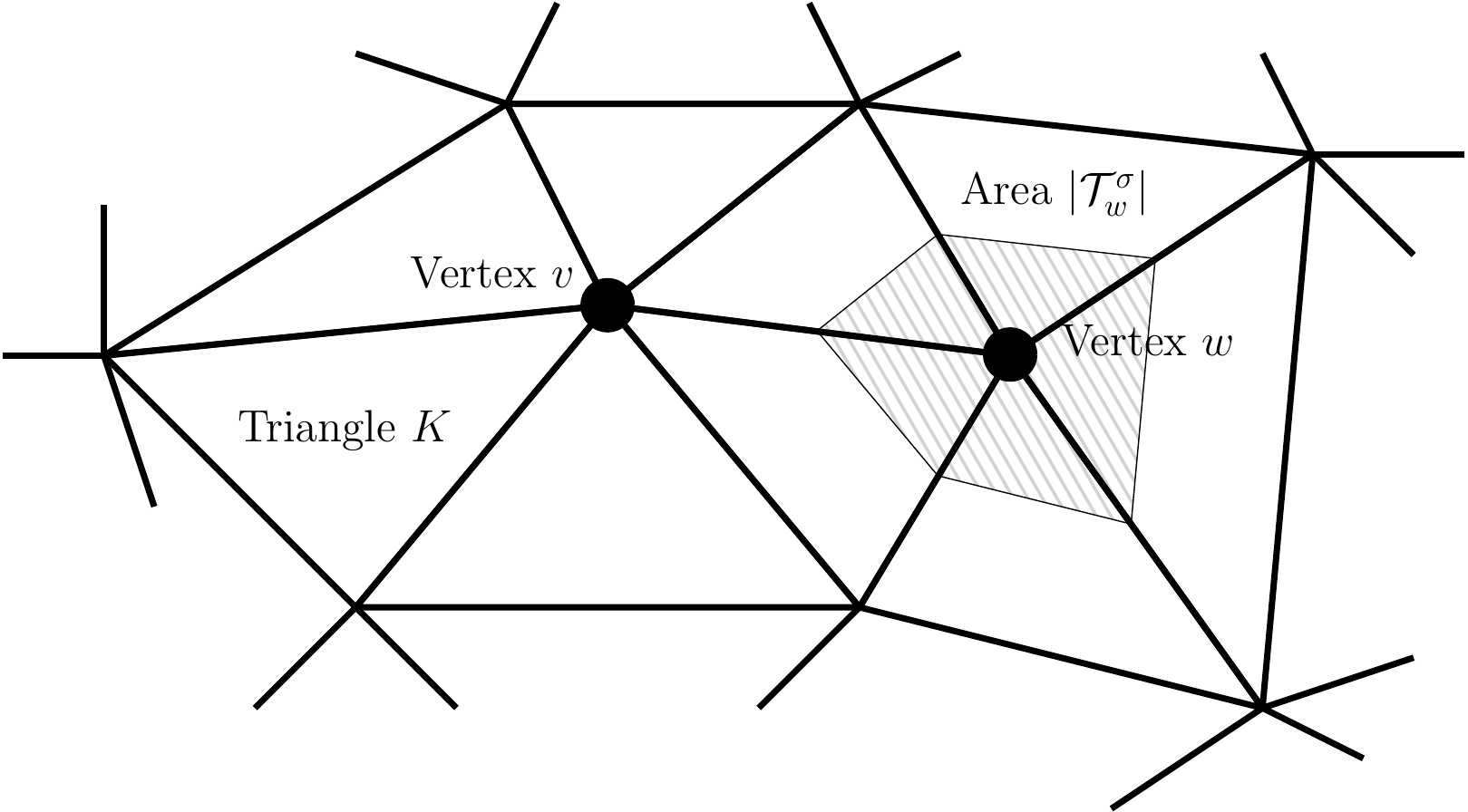}
\end{center}
\caption{Notations used for the triangulations. A triangulation is made of triangles which only intersect at their edges or their vertices. The ``area'' of a vertex $w$, denoted by $|\T^\sigma_w|$ is one third of the the area of all triangles to which $w$ belong: it is the area of the shaded region on the figure. Roughly speaking, the density is attached to vertices while the momentum is attached to triangles.}
\label{figure_finite_elements}
\end{figure}

\begin{figure}
\begin{center}
\includegraphics[width = 0.7 \textwidth]{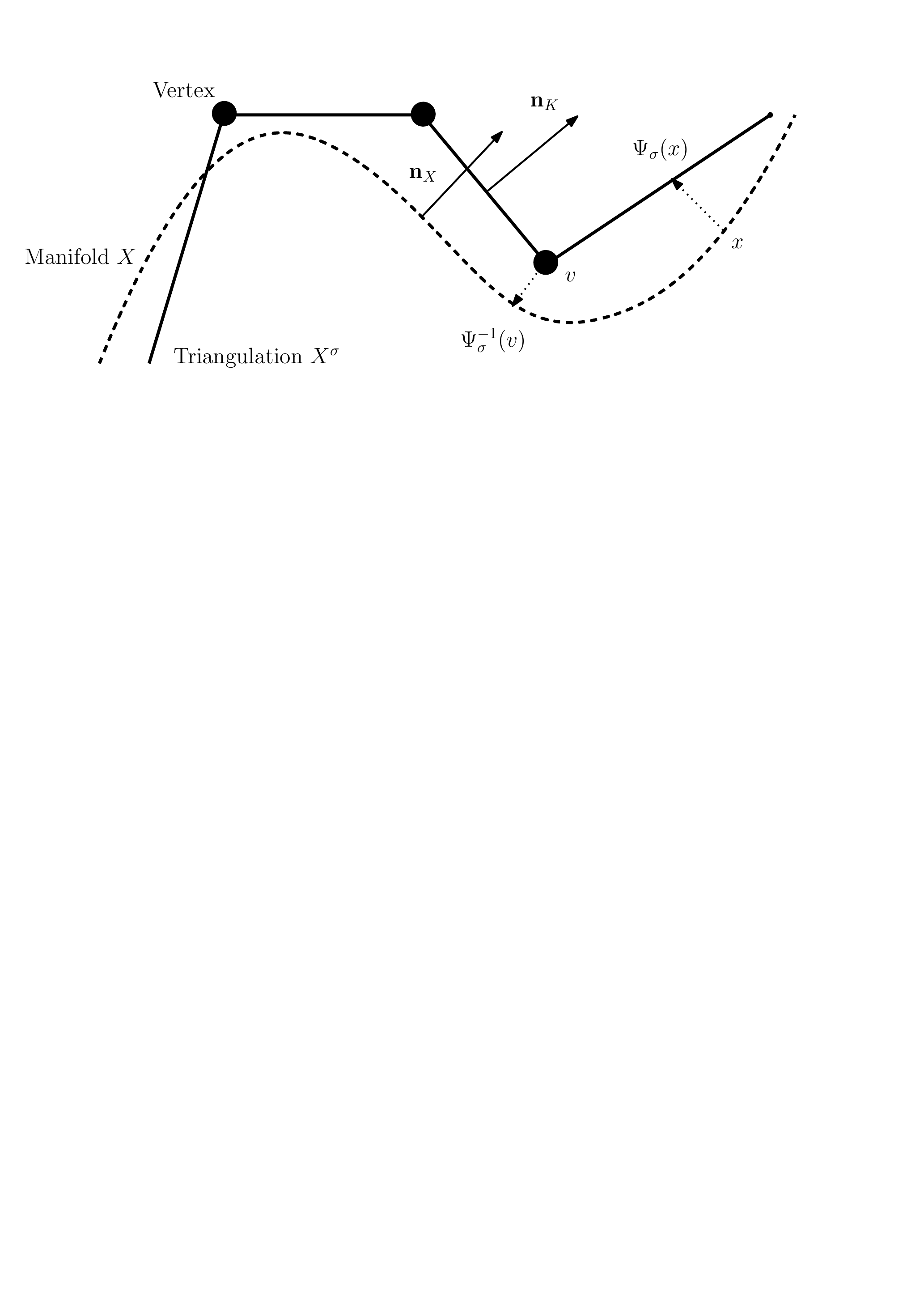}
\end{center}
\caption{\review{Two dimensional view of the link between the triangulation $X^\sigma$ and the manifold $X$. To each triangle is associated a normal $\nbf_K$ while $\nbf_X : X \to \R^3$ is the normal to the manifold. The map $\Psi_\sigma : X \to X^\sigma$ is the inverse of the nearest neighbor projection} \review{$\Psi_\sigma^{-1} : X^\sigma \to X$ onto $X$. This construction is the same as in \cite{Hildebrandt2006}.}}
\label{figure_projection}
\end{figure}

In this subsection, we want to show how our method applies to the discretization that we proposed in \cite{Lavenant2018}. We stronly encourgage the reader to take a look at the figures of this article, as we have numerically solved the fully discretized problem. The underlying space $X$ is now a smooth surface in $\R^3$ and we assume that we have a triangulation that approximates it in the $C^1$-sense, see \cite{Hildebrandt2006} and some explanations below. We use a discretization that is reminiscent of finite elements as velocity fields are defined over the triangles. The triangulation will be assumed to be \emph{regular} but not necessarily \emph{uniformly regular} (see below). In particular, it could allow one to try a multiscale discretization, with a refinement of the mesh only where needed. Moreover, compared to the finite volume discretization described in Subsection \ref{subsection_finite_volumes} below, no isotropy condition is required. 

\begin{rmk}
Before diving into the details of the spatial discretization, let us do a remark (that can be skipped at first reading) about the temporal discretization used in \cite{Lavenant2018} which differs from the one of the present article. Indeed, in \cite{Lavenant2018} we rather focused on discretizing the dual problem, hence the resulting functional (see \cite[Equation (16)]{Lavenant2018}) is slightly different, but can be expressed as follows by a duality argument. With the notation of the present article, if $\bar{P}_0, \bar{P}_1 \in \X_{\sigma,+}$ are given then we define $\tilde{\J}^{N, \sigma}_{\bar{P}_0, \bar{P}_1}$ on $(\X_\sigma)^N \times (\Y_\sigma)^{N+1}$ by 
\begin{equation*}
\tilde{\J}^{N, \sigma}_{\bar{P}_0, \bar{P}_1}( (P_k)_{1 \leqslant k \leqslant N}, (\Mbf_k)_{0 \leqslant k \leqslant N}  ) = \frac{\tau}{2} A_\sigma( P_1, \Mbf_0  ) + \frac{\tau}{2} A_\sigma(P_{N}, \Mbf_N) + \tau \sum_{k=1}^{N-1} A_\sigma \left( \frac{P_{k} + P_{k+1}}{2}, \Mbf_k \right)
\end{equation*}
if $(P_k)_{1 \leqslant k \leqslant N} \in (\X_{\sigma,+})^N$ and the discrete continuity equation is satisfied, that is
\begin{equation*}
\begin{cases}
\tau^{-1} (P_{k+1} - P_{k})  + \Div_\sigma (\Mbf_{k}) = 0, & \forall k \in \{1,2, \ldots, N-1 \} \\
(\tau/2)^{-1} (P_{1} - \bar{P}_0) + \Div_\sigma( \Mbf_0 ) = 0, & \\
(\tau/2)^{-1} (\bar{P}_1 - P_{N}) + \Div_\sigma( \Mbf_N ) = 0 , & \\
\end{cases}
\end{equation*}
and $+ \infty$ otherwise. As the reader can see, the difference appears near the temporal boundary conditions (in some sense the first and last temporal step sizes are $\tau/2$ and not $\tau$), but the techniques of this article still work if one replaces $\J^{N, \sigma}$ by $\tilde{\J}^{N, \sigma}$. 
\end{rmk}

Let us go through the spatial discretization. We take $X \subset \R^3$ a smooth compact $2$-dimensional submanifold of $\R^3$ without boundary. We approximate $X$ with a triangulation. Namely, we assume that for each $\sigma > 0$ small enough we have a pair $(\T^\sigma, \V^\sigma)$ where $\V^\sigma \subset \R^3$ is a finite subset of $\R^3$ (the vertices), and $\T^\sigma$ is a set of triangles in $\R^3$. 

Each $K \in \T^\sigma$ is a triangle in $\R^3$ whose vertices belong to $\V^\sigma$. We denote $X^\sigma = \bigcup_{K \in \T^\sigma} K$ the polyhedral surface that it generates. Notice that both $X^\sigma$ and $X$ are subsets of $\R^3$, but their intersection may be empty. We assume that $\sigma$ is the maximum diameter of elements in $\T^\sigma$. Following \cite[Definition (4.4.13)]{Brenner2007}, we assume that the family of triangulations is regular: it means that there exists a constant $c > 0$ (independent on $\sigma$) such that for every $\sigma > 0$ and every $K \in \T^\sigma$, one can fit a disk of radius $c ~ \mathrm{diam}(K)$ in $K$. It implies that all angles in the triangles are uniformly bounded from below independently on $\sigma$; that for every $\sigma > 0$ and every $K \in \T^\sigma$, the number of triangles sharing a vertex with $K$ is bounded independently on $\sigma$, and all these neighboring triangles have a diameter comparable to the one of $K$. However, contrary to the next section we do not assume that the triangulation is \emph{uniformly regular} (that is, quasi regular in the sense of \cite[Definition (4.4.13)]{Brenner2007}) which means that the minimal diameter of triangles in $\T^\sigma$ can be much smaller than $\sigma$. 

Following \cite{Hildebrandt2006}, we assume that the family of triangulations approximate the manifold $X$ in the $C^1$ sense, \review{see Figure \ref{figure_projection} for a geometric picture}. Specifically $X^\sigma$ is assumed to be a graph over $X$: it means that $X^\sigma$ always lie in the set of points in $\R^3$ for which the projection onto $X$ is smooth and uniquely defined, and that the projection onto $X$ is one to one once restricted to $X^\sigma$. In particular we can define $\Psi_\sigma : X \to X^\sigma$ as the inverse of the projection map $\Psi^{-1}_\sigma : X^\sigma \to X$, and both $\Psi_\sigma$ and $\Psi_\sigma^{-1}$ are continuous and one-to-one. The differential of $\Psi_\sigma$ at the point $x \in X$ is denoted by $D \Psi_\sigma(x) : T_x X \to \nbf_K^\perp \subset \R^3$, where $K$ is the triangle in which $\Psi_\sigma(x)$ lies and $\nbf_K$ is the normal to $K$. Notice that $D \Psi_\sigma$ is well defined up to an negligible set on $X$ (the image by $\Psi_\sigma^{-1}$ of the edges of the triangles of $\T^\sigma$). We assume that $X^\sigma$ converges in Hausdorff distance to $X$, that is $0$-th order convergence. We also assume that the convergence holds at first order in the following sense: \review{if we denote by $\nbf_X : X \to \R^3$ the normal mapping to the manifold $X$ (not to be confused with $\nbf_{\dr X}$ the normal to $\dr X$, and the latter is empty in this case), and we also denote by $\nbf_K$ the normal to the triangle $K$, and we assume} 
\begin{equation*}
\lim_{\sigma \to 0} \left( \sup_{K \in \T^\sigma} \sup_{x \in K} | \mathbf{n}_K - \mathbf{n}_X( \Psi^{-1}_\sigma(x)  )  | \right) = 0.
\end{equation*}  
We refer to \cite[Theorem 2]{Hildebrandt2006} for alternative (but equivalent) formulations of such a convergence.

Our result will be already interesting if $X$ were flat: the reader not so familiar with differential geometry can think as $\Psi_\sigma$ being the identity mapping to translate the results in the flat case.

\bigskip

Let us set some additional notations first, see Figure \ref{figure_finite_elements} for an explanation about some of them. The area (i.e. the $2$-dimensional Haussdorff measure) of a triangle $K \in \T^\sigma$ is denoted by $|K|$. The set of vertices of a triangle $K$ is $\V^\sigma_K \subset \V^\sigma$, while the set of triangles to which a vertex $v$ belongs is $\T^\sigma_v \subset \T^\sigma$. If $v \in \V^\sigma$, we denote by $\hat{\phi}_v$ the function defined on $X_\sigma$, piecewise linear on each triangle, such that $\hat{\phi}_v(v) = 1$ and $\hat{\phi}_v$ is $0$ everywhere else on $\V^\sigma$. In particular, $\hat{\phi}_v$ has a gradient constant over each triangle, and non zero only on $\T^\sigma_v$. We define $|\T^\sigma_v| = \frac{1}{3} \sum_{K \in \T^\sigma_v} |K|$ the ``area'' of the vertex $v$. It can also be expressed by $|\T^\sigma_v| = \int_{X^\sigma} \hat{\phi}_v$, where integration is performed w.r.t. the $2$-dimensional Haussdorff measure.  

We define our finite dimensional model of dynamical optimal transport as follows. 
\begin{itemize}
\item[•] $\X_\sigma = \R^{\V^\sigma}$ and $\X_{\sigma,+} = (\R_+)^{\V^\sigma}$ while 
\begin{equation*}
\Y_\sigma = \left\{ \Mbf = (\Mbf_K)_{K \in \T^\sigma} \in (\R^3)^{\T^\sigma} \ : \ \forall K \in \T^\sigma, \ \Mbf_K \cdot \mathbf{n}_K = 0  \right\}. 
\end{equation*}
That is, density is defined by one scalar per vertex, while momentum is one vector by triangle, lying in the vector space parallel to the triangle.  
\item[•] The divergence operator, for $\Mbf \in \Y_\sigma$ and $v \in \V^\sigma$, is given by 
\begin{equation*}
(\Div_\sigma \Mbf)_v = - \frac{1}{|\T^\sigma_v|}\sum_{K \in \T^\sigma_v} |K| \left( \left. \nabla \hat{\phi}_v \right|_{K} \cdot \Mbf_K \right).
\end{equation*}
This divergence is, for proper scalar products, the dual operator to the gradient restricted to the set of piecewise affine functions. 
\item[•] The action $A_\sigma$ is defined for $(P, \Mbf) \in \X_\sigma \times \Y_\sigma$ as 
\begin{equation*}
A_\sigma(P, \Mbf) = \sum_{K \in \T^\sigma} \frac{1}{2} \frac{ |\Mbf_K|^2  }{ (\sum_{v \in \V^\sigma_K} P_v)/3} |K|.
\end{equation*}
The discrete action $A_\sigma$ easily satisfies the homogeneity and monotonicity requirements of Definition \ref{definition_approximation}. In the formula above, we follow the convention of \eqref{equation_convention_b22a} to define the quotient: such a choice makes $A_\sigma$ convex and lower semi-continuous. 
\end{itemize}
Up to now, we have copied the definitions of \cite{Lavenant2018}. Both the values of the density and the momentum on vertices or triangles are thought as \emph{intensive}, that is they are normalized by the volume of the vertex or the triangle they live on. Of course, all of these definitions do not require the knowledge of $X$, only the one of the triangulation. Let us now switch to reconstruction and sampling. 

\begin{itemize}
\item[•] To sample, we assume that $\mbf \in \M(TX)$ is absolutely continuous w.r.t. volume measure, and $\rho \in \M(X)$ is arbitrary. We define 
\begin{equation*}
(S_{\X_\sigma} (\rho))_v = \frac{1}{|\T^\sigma_v|} \langle  \rho, \hat{\phi}_v \circ \Psi_\sigma \rangle  \ \text{ and } \ (S_{\Y_\sigma} (\mbf))_K = \frac{1}{|K|} \int_{ \Psi_\sigma^{-1}(K)} D \Psi_\sigma(x) \mbf(x) \, \ddr x .
\end{equation*}
Notice that $S_{\X_\sigma} (\rho)$ is some sort of projection onto the set of piecewise affine functions on $X_\sigma$ of  the image measure of $\rho$ by $\Psi_\sigma$. 
\item[•] There are two ways to reconstruct the density, and one way for the momentum. Roughly, for $R^{\CE}_{\X_\sigma}$ we put a Dirac mass at every vertex, while for $R^A_{\X_\sigma}$ we take a density constant over each triangle, equal to the mean of the densities of the vertices of the triangle. On the other hand, for $R_{\Y_\sigma}$ we take a momentum constant over each triangle. Then, we pull everything back onto $X$ via the mapping $\Psi_\sigma$. Specifically, if $\phi \in C(X)$ and $\bbf \in C(X,TX)$ are test functions defined over $X$,
\begin{multline*}
\langle R^A_{\X_\sigma} (P), \phi \rangle = \sum_{K \in \T^\sigma}  \left( \frac{1}{3} \sum_{v \in \V^\sigma_K} P_v \right) \int_{\Psi_\sigma^{-1}(K)} \phi(x) \,\ddr x,
 \ \  \ \langle \ R^{\CE}_{\X_\sigma} (P), \phi \rangle = \sum_{v \in \V^\sigma} P_v |\T^\sigma_v| \phi( \Psi_\sigma^{-1}(v) ), \\
\text{ and } \ \langle R_{\Y_\sigma} (\Mbf), \bbf \rangle =  \sum_{K \in \T^\sigma} \Mbf_K \cdot \left( \int_{\Psi_\sigma^{-1}(K)}  D \Psi_\sigma(x)  \bbf(x) \, \ddr x \right).  
\end{multline*} 
\end{itemize} 

\noindent It is really straightforward to check that $R^A_{\X_\sigma}$ and $R^{\CE}_{\X_\sigma}$ map $\X_{\sigma,+}$ into $\M_+(X)$ while $S_{\X_\sigma}(\M_+(X)) \subset \X_{\sigma,+}$. 


\begin{prop}
\label{proposition_finite_elements_adapated}
Let $X \subset \R^3$ be a smooth compact $2$-dimensional submanifold of $\R^3$ without boundary. For each $\sigma > 0$, assume that we have $(\T^\sigma, \V^\sigma)_\sigma$ a triangulation in $\R^3$, and that this family of triangulations is regular and converges as $\sigma \to 0$ to $X$ in the $C^1$ sense defined in \cite{Hildebrandt2006}.  

Then, provided that $(\X_\sigma, \Y_\sigma, A_\sigma, \Div_\sigma)_\sigma$ are defined as above, we get a family of finite dimensional models of dynamical optimal transport which is adapted to $X$ (in the sense of Definition \ref{definition_adapated}), with reconstruction and sampling operators  $(R^{\CE}_{\X_\sigma}, R^A_{\X_\sigma}, R_{\Y_\sigma}, S_{\X_\sigma}, S_{\Y_\sigma})_\sigma$ defined as above.   
\end{prop}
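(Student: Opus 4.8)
The plan is to verify, one by one, the seven conditions \ref{asmp_interp_sampling}--\ref{asmp_controllability} of Definition \ref{definition_adapated}, replacing \ref{asmp_interp_action} by the stronger but more transparent \ref{asmp_interp_action_stronger} and then invoking Lemma \ref{lemma_interp_action}. Throughout I would lean on two structural facts. First, the regularity of the family $(\T^\sigma, \V^\sigma)_\sigma$ guarantees uniform lower bounds on the angles, a uniformly bounded number of triangles around each vertex, and comparable diameters among neighbours, so that per-triangle and per-vertex errors accumulated in a sum remain of the right order. Second, the $C^1$ convergence of $X^\sigma$ to $X$ means that $\Psi_\sigma$ and $\Psi_\sigma^{-1}$ are, uniformly in $\sigma$, close to Riemannian isometries: the operator norms of $D\Psi_\sigma(x)$ and of its adjoint are at most $1 + \varepsilon_\sigma$, and the Jacobian relating the Euclidean area $|K|$ to the Riemannian area $|\Psi_\sigma^{-1}(K)|$ lies between $1-\varepsilon_\sigma$ and $1+\varepsilon_\sigma$. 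Together these turn every local estimate into a global one with a multiplicative or additive error $\varepsilon_\sigma \to 0$.

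The backbone of the argument is the exact identity \ref{asmp_samp_der}. Writing $(S_{\X_\sigma}(\nabla\cdot\mbf))_v = |\T^\sigma_v|^{-1}\langle \nabla\cdot\mbf, \hat\phi_v\circ\Psi_\sigma\rangle$ and integrating by parts on $X$ (which has no boundary here) gives $-|\T^\sigma_v|^{-1}\int_X g_x(\mbf(x), \nabla(\hat\phi_v\circ\Psi_\sigma)(x))\,\ddr x$. The chain rule $\nabla(\hat\phi_v\circ\Psi_\sigma)(x) = (D\Psi_\sigma(x))^\top(\nabla\hat\phi_v|_K)$ on $\Psi_\sigma^{-1}(K)$, where the adjoint is taken between $\nbf_K^\perp$ and $(T_xX, g_x)$, matches term by term the definition of $\Div_\sigma$ composed with $S_{\Y_\sigma}$, and no error is created because $S_{\Y_\sigma}$ integrates $D\Psi_\sigma(x)\mbf(x)$ against the \emph{constant} vector $\nabla\hat\phi_v|_K$. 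The three genuinely asymptotic consistency conditions are then variations on the same theme. For \ref{asmp_interp_sampling} one notes that $(R^{\CE}_{\X_\sigma}\circ S_{\X_\sigma})(\rho)$ tests against $\phi$ as $\langle \rho, \sum_v (\hat\phi_v\circ\Psi_\sigma)\,\phi(\Psi_\sigma^{-1}(v))\rangle$, and the partition of unity $\sum_v\hat\phi_v = 1$ together with uniform continuity of $\phi$ gives convergence to $\langle\rho,\phi\rangle$. For \ref{asmp_interps}, the two reconstructions attach the same vertex mass $P_v|\T^\sigma_v|$ but place it at different points of a common triangle, so their difference against $\phi$ is controlled by the oscillation $O(\sigma)\|\phi\|_{C^1}$ plus the area distortion, both $\varepsilon_\sigma$, times $\sum_v|P_v||\T^\sigma_v| = \|R^{\CE}_{\X_\sigma}(P)\|$. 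For \ref{asmp_interp_der}, reorganizing $\langle R^{\CE}_{\X_\sigma}(\Div_\sigma\Mbf),\phi\rangle$ by triangles produces on each $K$ the gradient of the affine interpolant of the nodal values $\phi(\Psi_\sigma^{-1}(v))$; this differs from $\int_{\Psi_\sigma^{-1}(K)}D\Psi_\sigma(x)\nabla\phi(x)\,\ddr x$ by a second-order term $O(\sigma)\|\phi\|_{C^2}|K|$, which is why $C^2$ regularity is required, and the total error is $\varepsilon_\sigma\sum_K|K||\Mbf_K|\lesssim\varepsilon_\sigma\|R_{\Y_\sigma}(\Mbf)\|$.

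The action estimates are comparatively clean. For \ref{asmp_interp_action_stronger}, the reconstructed density is the constant $\tfrac13\sum_{v\in\V^\sigma_K}P_v$ on $\Psi_\sigma^{-1}(K)$ and the reconstructed momentum has density $(D\Psi_\sigma(x))^\top\Mbf_K$ there, so $A(R^A_{\X_\sigma}(P), R_{\Y_\sigma}(\Mbf))$ is a sum over $K$ of $\int_{\Psi_\sigma^{-1}(K)}|(D\Psi_\sigma(x))^\top\Mbf_K|_x^2/(2\cdot\tfrac13\sum_{v\in\V^\sigma_K} P_v)\,\ddr x$; bounding the operator norm of $(D\Psi_\sigma)^\top$ and the area by $1+\varepsilon_\sigma$ reproduces exactly $(1+\varepsilon_\sigma)^2 A_\sigma(P,\Mbf)$, which is \ref{asmp_interp_action_stronger} and hence, by Lemma \ref{lemma_interp_action}, \ref{asmp_interp_action}. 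For \ref{asmp_samp_actio}, with $\rho$ smooth and bounded below and $\mbf$ smooth, the sampled quantities $(S_{\Y_\sigma}\mbf)_K$ and $\tfrac13\sum_{v\in\V^\sigma_K}(S_{\X_\sigma}\rho)_v$ are local averages of the pulled-back $\mbf$ and of $\rho$; joint convexity of $(\rho,\mbf)\mapsto|\mbf|^2/(2\rho)$ (Jensen's inequality) yields the one-sided bound $A_\sigma(S_{\X_\sigma}\rho, S_{\Y_\sigma}\mbf)\leq A(\rho,\mbf)+\varepsilon_\sigma$, the lower bound on $\rho$ keeping all quotients harmless and the smoothness turning the discretization gap into an $\varepsilon_\sigma$.

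The main obstacle is the controllability condition \ref{asmp_controllability}, the only one without an infinite-dimensional template that transfers verbatim. Following Remark \ref{rmk_controllability}, I would fix $x,y\in X$, take a constant-speed minimizing geodesic $\gamma$ between them, and build a discrete analogue of the continuous triple $(\hat\rho,\hat\mbf_1,\hat\mbf_2)$: $\hat P$ should be a nonnegative vector supported on the vertices of the triangles met by $\Psi_\sigma\circ\gamma$, and $\hat\Mbf_1,\hat\Mbf_2$ discrete momenta routed along that triangulated path so that \emph{exactly} $\Div_\sigma\hat\Mbf_1 = \hat P - S_{\X_\sigma}(\delta_x)$ and $\Div_\sigma\hat\Mbf_2 = \hat P - S_{\X_\sigma}(\delta_y)$, noting that $S_{\X_\sigma}(\delta_x)$ is itself supported, with barycentric weights, on the vertices of the single triangle containing $\Psi_\sigma(x)$. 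The delicate points are to enforce the divergence constraint exactly on the discrete path while keeping $\hat P$ positive, and to show the action is bounded by $\omega(d_g(x,y))+\varepsilon_\sigma$ with $\omega(r)=Cr^2$: the momentum per unit mass must stay of order $d_g(x,y)$, mirroring the continuous computation in which the density of $\hat\mbf_1$ with respect to $\hat\rho$ is $(1-t)\dot\gamma(t)$. Once \ref{asmp_controllability} is in place, Proposition \ref{prop_controllability} and Theorem \ref{theo_GammaLimsup} apply and the verification is complete.
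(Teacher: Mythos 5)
Your treatment of \ref{asmp_interp_sampling}, \ref{asmp_interps}, \ref{asmp_interp_der}, \ref{asmp_samp_der}, \ref{asmp_samp_actio} and \ref{asmp_interp_action_stronger} (hence \ref{asmp_interp_action} via Lemma \ref{lemma_interp_action}) follows essentially the same route as the paper: integration by parts plus the chain rule for $\Psi_\sigma$ give the exact identity \ref{asmp_samp_der}, the near-isometry of $D\Psi_\sigma$ and the area distortion coefficients give the action estimates, and the affine-interpolant error bound of order $\sigma\|\phi\|_{C^2}$ gives \ref{asmp_interp_der}. One small caveat on \ref{asmp_samp_actio}: a pure Jensen argument does not quite close it, because the density is averaged against $\hat\phi_v\circ\Psi_\sigma$ over the whole star of each vertex while the momentum is averaged over $\Psi_\sigma^{-1}(K)$ only, so the two averaging measures do not match; you do fall back on smoothness and the lower bound on $\rho$ to absorb the mismatch, which is exactly the paper's quadrature argument, so this is presentation rather than substance.

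The genuine gap is \ref{asmp_controllability}. You correctly identify it as the main obstacle and correctly propose to discretize the continuous triple $(\hat{\rho},\hat{\mbf}_1,\hat{\mbf}_2)$ of Remark \ref{rmk_controllability}, but you stop at naming the two delicate points (exact discrete divergence with $\hat{P}$ nonnegative, and the action bound) without resolving them, so the proposition is not actually proved. The paper closes both at once by taking literally $\hat{P}=S_{\X_\sigma}(\hat{\rho})$ and $\hat{\Mbf}_i=S_{\Y_\sigma}(\hat{\mbf}_i)$, i.e.\ by sampling the singular continuous objects after perturbing the geodesic so that $\Psi_\sigma\circ\gamma$ crosses only finitely many edges (which makes $S_{\Y_\sigma}(\hat{\mbf}_i)$ well defined): the exact divergence identities then follow from \ref{asmp_samp_der}, and positivity of $\hat{P}$ is immediate. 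The action bound cannot be deduced from \ref{asmp_samp_actio} since the measures are far from smooth, so it is done by hand: mesh regularity gives $|\T^\sigma_v|\leqslant C|K|$ for $v\in\V^\sigma_K$, the pointwise inequality $\sum_{v\in\V^\sigma_K}\hat{\phi}_v\geqslant \1_K$ gives the lower bound $\frac{1}{3}\sum_{v\in\V^\sigma_K}\hat{P}_v\geqslant \hat{\rho}_K/(C|K|)$ with $\hat{\rho}_K=\hat{\rho}(\Psi_\sigma^{-1}(K))$, and the $L^\infty$ bound of the density of $\hat{\mbf}_1$ with respect to $\hat{\rho}$ by $2d_g(x,y)$ gives $|\hat{\Mbf}_1|\leqslant C\hat{\rho}_K\, d_g(x,y)/|K|$; summing over the crossed triangles and using $\|\hat{\rho}\|=1$ yields $A_\sigma(\hat{P},\hat{\Mbf}_1)\leqslant C\, d_g(x,y)^2$, i.e.\ $\omega(r)=Cr^2$ as you anticipated. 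Supplying this computation (or an equivalent one) is what is missing from your argument.
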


\begin{rmk}
The reconstruction operators still require the knowledge of $X$, which could be disappointing. However, we can define reconstruction operators valued in the set of measures over $\R^3$ (namely, concentrated on the polyhedral surface $X^\sigma$): densities and momenta are taken for instance taken constant on each triangle. Using the notations of Theorem \ref{theo_main}, in the limit $N \to + \infty, \sigma \to 0$ it is easy to see (because of the $C^1$ convergence of the surface) that the reconstructed densities and momenta on the surfaces $X_\sigma$ will converge to the same limit as $\Rc^A_{N,\X_\sigma}(P^{N, \sigma})$ and $\Rc_{N,\Y_\sigma}(\Mbf^{N, \sigma})$, provided the latter exist. As Theorem \ref{theo_main} combined with Proposition \ref{proposition_finite_elements_adapated} justify that these objects have a limit, so do the reconstructions which are not pulled back on $X$.  
\end{rmk}

\begin{proof}
Before doing computations, let us define some distortion coefficients for areas: if $\sigma > 0$ and $v \in \V^\sigma$ while $K \in \T^\sigma$,
\begin{equation*}
\alpha^\sigma_v := \frac{\int_{X} \hat{\phi}_v (\Psi_\sigma(x)) \, \ddr x  }{ |\T^\sigma_v|} - 1 \ \text{ and } \beta^\sigma_K = \frac{\int_{\Psi_\sigma^{-1}(K)} \, \ddr x}{|K|} - 1.
\end{equation*} 
Thanks to the $C^1$ convergence of the surface which implies convergence of areas \cite[Theorem 2]{Hildebrandt2006}, $\sup_{v \in \V^\sigma} |\alpha^\sigma_v|$ tends to $0$ as $\sigma \to 0$, as well as $\sup_{K \in \T^\sigma} |\beta^\sigma_K|$. Analogously, we define the distortion coefficients for the metric by, for $\sigma > 0$ and $K \in \T^\sigma$, 
\begin{equation*}
\theta^\sigma_K = \sup_{ x \in \Psi^{-1}_\sigma(K) }\sup_{w \in T_x X, w \neq 0} \left|  \frac{|D\Psi_\sigma(x) w|}{|w|} - 1 \right|,
\end{equation*}  
and again still by the $C^1$ convergence of the surface $\sup_K \theta^\sigma_K$ tends to $0$ as $\sigma \to 0$.

We first check \ref{asmp_samp_der}, that is the exact commutation between derivation and sampling. If $\mbf \in \M(TX)$ has a smooth density then for $v \in \V^\sigma$, 
\begin{align*}
(S_{\X_\sigma} (\nabla \cdot \mbf) )_v & = \frac{1}{|\T^\sigma_v|} \int_{X} \hat{\phi}_v(\Psi_\sigma(x)) (\nabla \cdot \mbf)(x) \, \ddr x \\
&= - \frac{1}{|\T^\sigma_v|}  \int_X \nabla \left( \hat{\phi}_v \circ \Psi_\sigma \right) \cdot \mbf(x) \, \ddr x \\
& = - \sum_{K \in \T^\sigma_v} \frac{1}{|\T^\sigma_v|} \int_{\Psi_\sigma^{-1}(K)} \left( (D \Psi_\sigma)^\top(x) \left. \nabla \hat{\phi}_v \right|_K \right) \cdot \mbf(x) \,  \ddr x \\ 
& = - \frac{1}{|\T^\sigma_v|} \sum_{K \in \T^\sigma_v} |K| \left( \left. \nabla \hat{\phi}_v \right|_K \right) \cdot \frac{1}{|K|} \int_{\Psi_\sigma^{-1}(K)}  \left( D \Psi_\sigma (x) \mbf(x) \right) \, \ddr x 
= (( \Div_\sigma \circ S_{\Y_\sigma} ) (\mbf))_v. 
\end{align*} 
Next we check \ref{asmp_samp_actio}, which amounts to say that we use a legit quadrature formula for the action. Let $\Bc$ and $\Bc'$ be bounded sets of $C^1(X)$ and $C^1(X,TX)$ respectively, with functions in $\Bc$ uniformly bounded from below by a positive constant. Let $\rho \in \M(X)$ and $\mbf \in \M(TX)$ whose densities w.r.t. volume measure (still denoted by $\rho$ and $\mbf$) belong to $\Bc$ and $\Bc'$ respectively. Thanks to the boundedness of $\Bc$, and given the definition of the coefficients $\alpha^\sigma_v$, we see that $|(S_{\X_\sigma} (\rho))_v - \rho(\Psi_\sigma^{-1}(v))| \leqslant C ( \sigma + |\alpha^\sigma_v|)$ where $C$ depends only on $\Bc$. For sampling of the momentum, we have to use also the distortions coefficients $\theta^\sigma_K$ for the metric tensor to find $\left| |(S_{\Y_\sigma} (\mbf))_K| - |\mbf( \Psi_\sigma^{-1}(z) )| \right| \leqslant C ( \sigma + |\beta^\sigma_K| + \theta^\sigma_K  )$ provided $z$ is any point of $K$. Putting these estimates together and using that $\rho$ is bounded from below,     
\begin{equation*}
\left| \frac{ |\Mbf_K|^2  }{2 (\sum_{x \in \V^\sigma_K} P_x)/3} |K| - \frac{|\mbf(z)|^2}{2\rho(z)} |K| \right| \leqslant C |K| \varepsilon_\sigma
\end{equation*}  
where $z$ is any point of $\Psi_\sigma^{-1}(K)$ and $\varepsilon_\sigma$ depends on $\sigma, \alpha^{\sigma}_v$, $\beta^\sigma_K$ and $\theta^\sigma_K$ and tends to $0$ uniformly in $K$ as $\sigma \to 0$. On the other hand, as $\rho$ and $\mbf$ have uniformly smooth densities, if $z \in \Psi_\sigma^{-1}(K)$ 
\begin{equation*}
\left| \frac{|\mbf(z)|^2}{2\rho(z)} |K| - \int_{\Psi_\sigma^{-1}(K)} \frac{|\mbf(x)|^2}{2 \rho(x)} \,\ddr x  \right| \leqslant C |K| (\sigma + |\alpha^\sigma_K| ). 
\end{equation*} 
Putting these two estimates together and summing over $K$ we get \ref{asmp_samp_actio}. 

Now we turn to reconstruction. The one-sided estimate for the action is clear. Actually, thanks to Lemma \ref{lemma_interp_action}, we will check \ref{asmp_interp_action_stronger} rather than \ref{asmp_interp_action}: the reconstruction operator $R^A_{\X_\sigma}$ is built for that. Let us take $P \in \X_\sigma$ and $\Mbf \in \Y_\sigma$. For every $K \in \T^\sigma$, the measure $R^A_{\X_\sigma} (P)$ has a density constant over $\Psi_\sigma^{-1}(K)$ and equal to $\frac{1}{3} \sum_{v \in \V^\sigma_K} P_v$, while the measure $R_{\Y_\sigma} (\Mbf)$ has a density given by $(D \Psi_\sigma)^\top \Mbf_K$. As the linear operator $D \Psi_\sigma$ is almost an isometry (up to an error $\theta^\sigma_K$), $|(D \Psi_\sigma)^\top \Mbf_K|^2 \leqslant (1 + \theta^\sigma_K)^2 |\Mbf_K|^2$. Hence, 
\begin{align*}
A(R^A_{\X_\sigma} (P),R_{\Y_\sigma} (\Mbf) ) & = \sum_{K \in \T^\sigma} \int_{\Psi_\sigma^{-1}(K)} \frac{|(R_{\Y_\sigma} \Mbf) (x)|^2}{ 2 (R^A_{\X_\sigma} P)(x) } \,\ddr x \\
& \leqslant \sum_{K \in \T^\sigma} (1+ \theta^\sigma_K)^2   \frac{ |\Mbf_K|^2  }{2 (\sum_{v \in \V^\sigma_K} P_v)/3}  \int_{\Psi_\sigma^{-1}(K)} \,\ddr x \\ 
& \leqslant  \sum_{K \in \T^\sigma} (1+ \theta^\sigma_K)^2 (1+ |\beta^\sigma_K|)  \frac{ |\Mbf_K|^2  }{2 (\sum_{x \in \V^\sigma_K} P_x)/3} |K| \\
&\leqslant \left( \sup_{K \in \T^\sigma}  (1+ \theta^\sigma_K)^2 (1+ |\beta^\sigma_K|) \right) A_\sigma(P, \Mbf),
\end{align*}
and the factor in front of $ A_\sigma(P, \Mbf)$ in the last line tends to $1$ as $\sigma \to 0$. We emphasize that we have used the convention \eqref{equation_convention_b22a} in the computation above. On the other hand, to check \ref{asmp_interp_der} commutation between reconstruction and derivation, let us take $\Bc$ a bounded set of $C^2(X)$ and $\phi \in \Bc$ and compute: 
\begin{multline*}
\langle (R^{\CE}_{\X_\sigma} \circ \Div_\sigma)(P), \phi \rangle 
= - \sum_{v \in \V^\sigma} \left[ \sum_{K \in \T^\sigma_v} |K| \left( \left. \nabla \hat{\phi}_v \right|_{K} \cdot \Mbf_K \right) \right] \phi( \Psi_\sigma^{-1}(v) ) \\
= - \sum_{K \in \T^\sigma} |K| \Mbf_K \cdot \left( \sum_{v \in \V^\sigma_K} \phi(\Psi^{-1}_\sigma(v)) \left. \nabla \hat{\phi}_v \right|_K  \right).
\end{multline*} 
Now we notice that $\sum_{v \in \V^\sigma_K} \phi(\Psi^{-1}_\sigma(v)) \left. \nabla \hat{\phi}_v \right|_K$ is the gradient restricted to $K$ of the function $\hat{\phi} : X^\sigma \to \R$ which is continuous, piecewise affine on each triangle, and whose value on the vertex $v \in \V^\sigma$ coincide with $\phi( \Psi_\sigma^{-1}(v) )$. Thus, 
\begin{equation*}
\langle (R^{\CE}_{\X_\sigma} \circ \Div_\sigma)(P), \phi \rangle  = - \sum_{K \in \T^\sigma} |K| \Mbf_K \cdot \left. \nabla \hat{\phi} \right|_K.
\end{equation*} 
As $\hat{\phi}$ coincides with $\phi \circ \Psi^{-1}_\sigma : X_\sigma \to \R$ on $\V^\sigma$, thanks to the regularity of the mesh we know \cite[Corollary (4.4.24)]{Brenner2007} that at least on a triangle $K$, 
\begin{equation*}
\sup_{y \in K} \left| \left. \nabla \hat{\phi} \right|_K - \nabla \left( \phi \circ \Psi_\sigma^{-1} \right)(y) \right| \leqslant C \sigma,
\end{equation*}
where the constant $C$ depends on the second derivatives of $\phi \circ \Psi_\sigma^{-1}$ over $K$. As $K$ is flat, the second derivatives of $\Psi_\sigma^{-1}$ restricted to $K$ depend only on the derivatives of the metric tensor of the underlying space $X$. The latter is assumed to be smooth, while the derivatives of $\phi$ are bounded by a constant that depends only on $\Bc$. Hence the constant $C$ depends only on $X$ and $\Bc$, but does not depend on $\sigma$. On the other hand, by definition of $R_{\Y_\sigma}$, there holds 
\begin{equation*}
\langle R_{\Y_\sigma} (\Mbf), \nabla \phi \rangle =  \sum_{K \in \T^\sigma} \Mbf_K \cdot \int_{\Psi_\sigma^{-1}(K)}  D \Psi_\sigma(x) \nabla \phi(x) \,\ddr x.
\end{equation*}
Putting these two information together,  
\begin{multline*}
\left| \langle (R^{\CE}_{\X_\sigma} \circ \Div_\sigma)(P), \phi \rangle + \langle (R_{\Y_\sigma} \Mbf), \nabla \phi \rangle  \right| \\
\leqslant \left| \sum_{K \in \T^\sigma} \Mbf_K \cdot \left( - \int_K \nabla \left( \phi \circ \Psi_\sigma^{-1} \right)(y) \,\ddr y + \int_{\Psi_\sigma^{-1}(K)}  D \Psi_\sigma(x) \nabla \phi(x) \,\ddr x  \right) \right| 
+ C \sigma \left( \sum_{K \in \T^\sigma}  |K| |\Mbf_K| \right)  . 
\end{multline*}
By $C^1$ convergence of the triangulations, we can deduce that each term in front of $\Mbf_K$ is bounded by an error $|K| C \varepsilon_\sigma$, where $\varepsilon_\sigma$ depends on $\Psi_\sigma$ but tends to $0$ as $\sigma$ tends to $0$, while $C$ depends on $\Bc$. This is enough to yield \ref{asmp_interp_der}.

The first point \ref{asmp_interp_sampling} is very straightforward and left to the reader. On the other hand, for the second one \ref{asmp_interps}, that is to check that $R^A_{\X_\sigma}$ and $R^{\CE}_{\X_\sigma}$ are asymptotically equivalent, let us take $\Bc$ a bounded set of $C^1(X)$ and $\phi \in \Bc$, then
\begin{equation*}
\langle ( R^{\CE}_{\X_\sigma} - R^A_{\X_\sigma} )(P), \phi  \rangle = \sum_{v \in \V^\sigma} P_v \left( |\T^\sigma_v| \phi(\Psi_\sigma^{-1}(v)) - \sum_{K \in \T^\sigma_v} \frac{1}{3} \int_{\Psi_\sigma^{-1}(K)} \phi  \right).
\end{equation*}
As $\phi \in \Bc$, for each $v \in \V^\sigma$ the discrepancy between $\fint_{\Psi_\sigma^{-1}(K)} \phi$ and $\phi(\Psi_\sigma^{-1}(v))$ is smaller than $C \sigma$. Moreover, using the fact that the distortions coefficients $\alpha^\sigma_v$ are tending to $0$ when $\sigma \to 0$, we get the result. 

Eventually we need to prove controllability \ref{asmp_controllability}. The idea is to start with controllability at the continuous level: if $x,y \in X$, there exists $\gamma : [0,1] \to X$ a constant-speed joining them. Up to a small modification of the curve, we can assume that $\Psi_\sigma \circ \gamma$ intersects edges of the triangulation $\T^\sigma$ in a finite number of points while $|\dot{\gamma}(t)| \leqslant 2 d_g(x,y)$ for all $t \in [0,1]$. Let $\hat{\rho} \in \M([0,1] \times X)$ and $\hat{\mbf}_1, \hat{\mbf}_2 \in \M(TX)$ the continuous objects built according to \eqref{equation_controllability_continuous}. We set $\hat{P} = S_{\X_\sigma}(\hat{\rho})$ and $\hat{\Mbf}_i = S_{\Y_\sigma}(\hat{\mbf}_i)$ for $i \in \{1,2\}$. The reader can check, as $\Psi_\sigma \circ \gamma$ intersects finitely many edges, that $S_{\Y_\sigma}(\hat{\mbf}_i)$ is well defined and, by \ref{asmp_samp_der}, that $\Div_\sigma( \hat{\Mbf}_1 ) = \hat{P} - S_{\X_\sigma}(\delta_x)$, and $\Div_\sigma( \hat{\Mbf}_2 ) = \hat{P} - S_{\X_\sigma}(\delta_y)$. We need to estimate the action (and we cannot use \ref{asmp_samp_actio} because of a lack of smoothness), let us do it for $A_\sigma(\hat{P}, \hat{\Mbf}_1)$. Let $K$ be one of the triangle that $\Psi_\sigma \circ \gamma$ crosses and $\hat{\rho}_K = \hat{\rho}( \Psi_\sigma^{-1}(K))$. As the mesh is regular, all the triangles sharing one vertex with $K$ have an area comparable to the one of $K$, hence $|\T^\sigma_v|$ is also comparable to $|K|$ for $v$ any vertex of $K$. Thus we can estimate, up to a constant $C$ independent on $\sigma$ and changing from one equation to an other,  
\begin{equation*}
\frac{1}{3} \sum_{v \in \V^\sigma_K} \hat{P}_v \geqslant \frac{1}{C |K|} \sum_{v \in \V^\sigma_K} \langle \hat{\rho}, \hat{\phi}_v \circ \Psi_\sigma  \rangle 
\geqslant  \frac{1}{C |K|} \sum_{v \in \V^\sigma_K} \langle \hat{\rho}, \1_{K} \circ \Psi_\sigma \rangle \geqslant \frac{\hat{\rho}_K}{C|K|} 
\end{equation*}  
where we have first used that $\sum_{v \in \V^\sigma_K} \hat{\phi}_v \geqslant \1_K$ and then the definition of $\hat{\rho}_K$ as well as the low distortion of areas induced by $\Psi_\sigma$. On the other hand, recall that $\hat{\mbf}_1$ has a density bounded by $2 d_g(x,y)$ w.r.t. $\hat{\rho}$. Hence, still on this triangle $K$ and starting from the definition of $\hat{\mbf}_1$ and $S_{\Y_\sigma}$ one can easily estimate $|\hat{\Mbf}_1| \leqslant C \hat{\rho}_K d_g(x,y) / |K|$, where $C$ is independent on $\sigma$ and depends on the operator norm of $D \Psi_\sigma$. Putting these estimates together, 
\begin{equation*}
A_\sigma(\hat{P}, \hat{\Mbf}_1) \leqslant \sum_{K \in \V^\sigma} \frac{1}{2 ( \hat{\rho}_{K} / C|K| )  } \left( C \frac{\hat{\rho}_{K} d_g(x,y)}{|K|} \right)^2 |K| \leqslant C d_g(x,y)^2 \sum_{K \in \V^\sigma}  \hat{\rho}_{K} = C d_g(x,y)^2, 
\end{equation*}   
where the last identity comes from the definition of the $\hat{\rho}_K$ and the fact that $\hat{\rho}$ has unit mass.

\end{proof}

\subsection{Finite volumes}
\label{subsection_finite_volumes}

\begin{figure}
\begin{center}
\includegraphics[width = 0.7 \textwidth]{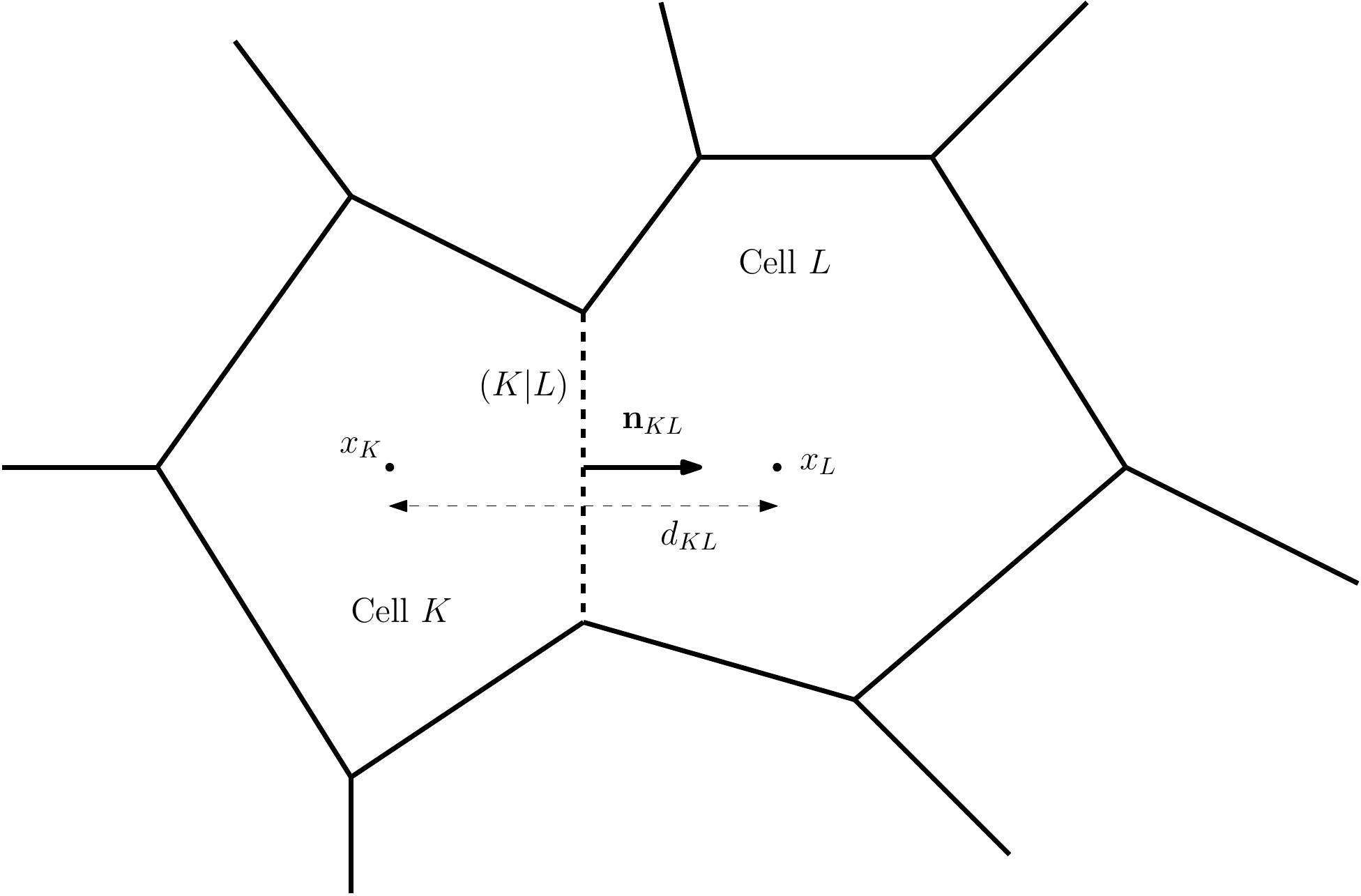}
\end{center}
\caption{Notations used for finite volumes. Two cells (i.e. convex polytopes) $K$ and $L$ on the left and right respectively. Each cell contains a specific point, here $x_K$ and $x_L$. The boundary $(K|L)$ between the two cells is the dashed line. The distance between the centers of cells $K$ and $L$ is $d_{KL}$, while $\nbf_{KL}$ is the unit vector orthogonal to $(K|L)$ oriented from $K$ to $L$. By assumption the mesh is admissible, that is the vector joining $x_K$ to $x_L$ orthogonal to $(K|L)$. Roughly speaking, the density is attached to the cells while the momentum is attached to the boundaries between cells.}
\label{figure_finite_volumes}
\end{figure}

This part focuses on the discretization proposed by Gladbach, Kopfer and Maas in \cite{Gladbach2018}. We strongly advise the reader to read the article and reference therein to get a better understanding of this discretization and the results known about it. We take $X = \Omega \subset \R^d$ a convex compact subset of $\R^d$ with smooth boundary. In particular $\M(TX) = \M(\Omega)^d$ is the space of $d$-dimensional vectorial measures. 

\medskip

For each $\sigma > 0$, we assume that we have a mesh $(\T^\sigma, (x_K)_{K \in \T^\sigma})$ that is: $\T^\sigma$ is a partition of $\Omega$ in convex sets (called cells) with non empty interior; and for each $K \in \T^\sigma$, we have a point $x_K$ located in the interior of the cell $K$. For the sake of clarity we will work in the simplest framework proposed by the aforementioned article: we will only consider symmetric means. As identified by \cite{Gladbach2018}, we will need to make an \emph{isotropy} condition for the convergence to hold, see below.  

We refer to Figure \ref{figure_finite_volumes} to visualize some notations. The volume of each cell $K$ is denoted by $|K|$. For each pair $(K,L) \in (\T^\sigma)^2$, we assume that $\bar{K} \cap \bar{L} =: (K|L)$ is a flat polytope with $d-1$-dimensional Haussdorff measure $|(K|L)|$. Although cells included in the interior of $\Omega$ are polytopes, we allow for the boundary of a cell to coincide with $\dr \Omega$, hence be curved. If $|(K|L)| > 0$, we write $K \sim L$ and denote by $\E^\sigma = \{ (K, L) \ : \ K \sim L \} \subset \T^\sigma \times \T^\sigma$ the set of ``edges''. For a cell $K$, we denote by $\T^\sigma_K \subset \T^\sigma$ the set of neighboring cells, i.e. $\T^\sigma_K = \{ L \in \T^\sigma \ : \ K \sim L \}$. We assume that if $K \sim L$ then $x_K - x_L$ is orthogonal to $(K|L)$. We denote by $d_{KL}$ the distance between $x_K$ and $x_L$ and $\nbf_{KL} = (x_L -x_K)/d_{KL}$ the normal to $(K|L)$. 
 
We assume that the mesh is \emph{uniformly regular} in the following sense. There exists a constant $c > 0$ independent on $\sigma$ such that, for any $\sigma > 0$ the following holds: the diameter of each cell is bounded by $c^{-1} \sigma$; additionaly for any $K \in \T^\sigma$ the ball of center $x_K$ and radius $c \sigma$ is contained in $K$; and if $(K,L) \in \E^\sigma$ then $|(K|L)| \geqslant c \sigma^{d-1}$. The terminology slightly differs from \cite{Gladbach2018}: we have added the word ``uniformly'' to make the distinction with the regularity assumption of the discretization presented in the previous section.
 
Eventually, following \cite[Definition 1.3]{Gladbach2018} we make an isotropy assumption. Specifically there exists $(\varepsilon_\sigma)_\sigma$, tending to $0$ as $\sigma \to 0$ such that for every $\sigma > 0$, every cell $K \in \T^\sigma$ and every vector $v \in \R^d$, there holds
\begin{equation}
\label{equation_isotropy}
\frac{1}{2 |K|} \sum_{L \in \T^\sigma_K} |(K|L)| d_{KL} (v \cdot \nbf_{KL})^2 \leqslant (1+ \varepsilon_\sigma) |v|^2. 
\end{equation}
We refer to \cite{Gladbach2018} to get a better understanding of this condition, which the authors have identified as being necessary for convergence to hold. As they indicate, if the mesh satisfies the ``center of mass'' condition, that is if for $(K,L) \in \E^\sigma$ the mean between $x_K$ and $x_L$ is also the center of mass of $(K|L)$, then the isotropy condition holds.

We give also ourselves a \emph{symmetric} mean $\theta : \R_+ \times \R_+  \to \R_+$ admissible in the sense of \cite[Definition 2.3]{Gladbach2018}, that is: a smooth, positively $1$-homogeneous, symmetric, jointly concave function such that $\theta(1,1) = 1$. It implies in particular that $\theta(a,b) \leqslant (a+b)/2$.

\medskip

Then, for $\sigma$ given, that is for a given mesh, we define the approximation of dynamical optimal transport as follows. 
\begin{itemize}
\item[•] $\X_\sigma = \R^{\T^\sigma}$ while $\X_{\sigma,+} = (\R_+)^{\T^\sigma}$ and $\Y_\sigma \subset \R^{\E^\sigma}$ is the set of $(M_{KL})_{(K,L) \in \E^\sigma}$ such that $M_{KL} = - M_{LK}$ for any $(K,L) \in \E^\sigma$. As elements of $\Y_\sigma$ are scalar fields rather than vectorial ones, a generic element in $\Y_\sigma$ will be denoted by $M$ and not $\Mbf$.
\item[•] The divergence operator is defined by, if $M \in \Y_\sigma$ and $K \in \T^\sigma$, 
\begin{equation*}
(\Div_\sigma M)_K := \sum_{L \in \T^\sigma_K} \frac{|(K|L)|}{|K|} M_{KL}.
\end{equation*} 
\item[•] The discrete action is defined on $\X_\sigma \times \Y_\sigma$ as
\begin{equation*}
A_\sigma(P, M) := \sum_{(K,L) \in \E^\sigma} \frac{M_{KL}^2}{ 4 \theta(P_K, P_L)} |(K|L)| d_{KL} 
\end{equation*}
We have a supplemental $1/2$ factor as each edge appears twice in $\E^\sigma$. The discrete action $A_\sigma$ easily satisfies the homogeneity and monotonicity requirements of Definition \ref{definition_approximation}. In the formula above we follow the convention of \eqref{equation_convention_b22a} to define the quotient: such a choice makes $A_\sigma$ convex and lower semi-continuous. 
\end{itemize}

\noindent In other words, the density is defined on the cells and the momentum on the edges, i.e. at interfaces between cells. Both the values of the density and the momentum on cells or edges are thought as \emph{intensive}, that is they are normalized by the volume or area of the cell or edge they live on. Up to now, we have just copied the definitions of \cite{Gladbach2018}.

As we will have to consider assumption \ref{asmp_interp_action}, we need to specify a representation of $\Y'_\sigma$ the dual of $\Y_\sigma$. To that extent we consider $\Y_\sigma$ as a (finite dimensional) Hilbert space with scalar product define by, for $B,M \in \Y_\sigma$, 
\begin{equation*}
\langle M, B \rangle = \sum_{(K,L) \in \E^\sigma} \frac{M_{KL} B_{KL}}{2} d_{KL} |(K|L)|.
\end{equation*} 
With that choice which allows us to identify $\Y'_\sigma$ with $\Y_\sigma$, the Legendre transform of the action is simply given by, for $P \in \X_{\sigma, +}$ and $B \in \Y_\sigma$, 
\begin{equation*}
A^\star_\sigma (P, B) = \sum_{(K,L) \in \E^\sigma} \frac{\theta(P_K, P_L)}{ 4 } (B_{KL})^2 d_{KL} |(K|L)|.
\end{equation*}

\medskip

Now we need to define reconstruction and sampling operators. 
\begin{itemize}
\item[•] To sample, we assume that $\mbf$ has a continuous density w.r.t. the $d$-dimensional Lebesgue measure, still denoted by $\mbf$, while $\rho \in \M(X)$ is arbitrary. We define 
\begin{equation*}
(S_{\X_\sigma} (\rho))_K = \frac{\rho(K)}{|K|} = \fint_K \ddr \rho \ \text{ and } \ (S_{\Y_\sigma} (\mbf))_{KL} = \frac{1}{|(K |L)|} \int_{(K|L)} \mbf \cdot \mathbf{n}_{KL} \, \ddr \mathcal{H}^{d-1} = \fint_{(K|L)} \mbf \cdot \mathbf{n}_{KL},
\end{equation*}
where $\mathcal{H}^{d-1}$ is the $(d-1)$-dimensional Hausdorff measure. 

\review{We have to assume that each boundary $(K|L)$ belongs to either $K$ or $L$, in such a way that $S_{\X_\sigma}(\rho)$ is still unambiguously defined if $\rho((K|L)) > 0$. It is enough to assume that the cells form a partition of $\Omega$, and to impose no openness or closedness assumption on the cells. Assumptions \ref{asmp_interp_sampling}, \ref{asmp_interp_der} and \ref{asmp_samp_actio} clearly hold whatever the choice which is made, as well as Assumption \ref{asmp_controllability} as the reader can see below.}

\item[•] On the other hand, reconstruction operators are defined as follows. Let $P \in \X_\sigma$ and $M \in \Y_\sigma$. We define
\begin{multline*}
R^A_{\X_\sigma} (P) = \sum_{K \in \T^\sigma} P_K \1_K,
 \ \ R^{\CE}_{\X_\sigma} (P) = \sum_{K \in \T^\sigma} P_K |K| \delta_{x_K}, \\
\text{ and } \ R_{\Y_\sigma} (M) = \frac{1}{2} \sum_{(K,L) \in \E^\sigma}  M_{KL} d_{KL} \nbf_{KL} \1_{(K|L)}.  
\end{multline*}
More specifically, the measure $R_{\Y_\sigma} (M)$ is a $(d-1)$-dimensional measure concentrated on the interfaces between cells: if $\bbf \in C(\Omega, \R^d)$ then 
\begin{equation*}
\langle R_{\Y_\sigma} (M), \bbf \rangle = \frac{1}{2} \sum_{(K,L) \in \E^\sigma} M_{KL} d_{KL} \int_{(K|L)} \bbf(x) \cdot \nbf_{KL}  \, \mathcal{H}^{d-1}(\ddr x). 
\end{equation*}
Again notice the factor $1/2$ as each edge is counted twice. 
\end{itemize} 

\noindent It is really straightforward to check that $R^A_{\X_\sigma}, R^{\CE}_{\X_\sigma}$ map $\X_{\sigma,+}$ into $\M_+(X)$ while $S_{\X_\sigma}(\M_+(X)) \subset \X_{\sigma,+}$. It is also very easy to check that in this case, given the scalar product that we put on $\Y_\sigma$, that there holds $R_{\Y_\sigma}^\top = S_{\Y_\sigma}$ (provided one identifies measures with their density w.r.t. Lebesgue measure). 

\begin{prop}
Let $\Omega \subset \R^d$ compact convex with a smooth boundary. For each $\sigma > 0$, assume that we have $(\T^\sigma, (x_K)_{K \in \T^\sigma})$ an admissible uniformly regular mesh of diameter $\sigma$ and which satisfies the isotropy condition. Let $\theta$ be an admissible symmetric mean in the sense of of \cite{Gladbach2018}. 

Then, provided that $(\X_\sigma, \Y_\sigma, A_\sigma, \Div_\sigma)_\sigma$ are defined as above, we get a family of finite dimensional models of dynamical optimal transport which is adapted to $\Omega$ (in the sense of Definition \ref{definition_adapated}), with reconstruction and sampling operators  $(R^{\CE}_{\X_\sigma}, R^A_{\X_\sigma}, R_{\Y_\sigma}, S_{\X_\sigma}, S_{\Y_\sigma})_\sigma$ defined as above.   
\end{prop}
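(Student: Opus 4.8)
The plan is to verify the seven conditions \ref{asmp_interp_sampling}--\ref{asmp_controllability} of Definition \ref{definition_adapated} for the finite volume data. Several are pure consistency statements involving no derivatives and follow from the uniform regularity of the mesh by replacing cell averages with point values. For \ref{asmp_interp_sampling}, observe that $(R^{\CE}_{\X_\sigma}\circ S_{\X_\sigma})(\rho) = \sum_K \rho(K)\delta_{x_K}$, so testing against $\phi\in C(\Omega)$ produces $\sum_K\int_K(\phi(x_K)-\phi(x))\,\rho(\ddr x)$, bounded by the modulus of continuity of $\phi$ at scale $\sigma$ times $\|\rho\|$, which gives the weak convergence. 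For \ref{asmp_interps} the same manipulation with $\phi\in C^1$ yields $\langle(R^{\CE}_{\X_\sigma}-R^A_{\X_\sigma})(P),\phi\rangle = \sum_K P_K\int_K(\phi(x_K)-\phi(x))\,\ddr x$, bounded by $C\sigma\sum_K|P_K||K| = C\sigma\|R^{\CE}_{\X_\sigma}(P)\|$. For \ref{asmp_samp_der} I would apply the divergence theorem on each cell: $(S_{\X_\sigma}(\nabla\cdot\mbf))_K = |K|^{-1}\int_{\dr K}\mbf\cdot\nbf$, and since the no-flux condition annihilates the part of $\dr K$ lying on $\dr\Omega$ while the remaining faces are exactly the $(K|L)$, this equals $\sum_L|(K|L)||K|^{-1}\fint_{(K|L)}\mbf\cdot\nbf_{KL} = (\Div_\sigma\circ S_{\Y_\sigma})(\mbf)_K$, an exact identity.

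For \ref{asmp_interp_der} I would invoke the standard finite volume consistency estimate. A symmetrization using $M_{KL}=-M_{LK}$ gives $\langle(R^{\CE}_{\X_\sigma}\circ\Div_\sigma)(M),\phi\rangle = \tfrac12\sum_{(K,L)\in\E^\sigma}|(K|L)|M_{KL}(\phi(x_K)-\phi(x_L))$, while $\langle R_{\Y_\sigma}(M),\nabla\phi\rangle = \tfrac12\sum_{(K,L)\in\E^\sigma}M_{KL}d_{KL}\int_{(K|L)}\nabla\phi\cdot\nbf_{KL}$. The admissibility of the mesh, $x_L-x_K = d_{KL}\nbf_{KL}$, lets me write $\phi(x_K)-\phi(x_L) = -d_{KL}\fint_{[x_K,x_L]}\nabla\phi\cdot\nbf_{KL}$, and since $\phi\in C^2$ and the segment stays within $O(\sigma)$ of the face, this average differs from $\fint_{(K|L)}\nabla\phi\cdot\nbf_{KL}$ by $O(\sigma)$. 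The two terms therefore cancel up to $C\sigma\cdot\tfrac12\sum_{(K,L)}|M_{KL}|d_{KL}|(K|L)|$, and identifying the latter sum with $\|R_{\Y_\sigma}(M)\|$, the total variation of the face-concentrated measure, yields the required bound.

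The analytic heart of the argument is \ref{asmp_interp_action}, and here the isotropy condition \eqref{equation_isotropy} is indispensable. One cannot route through Lemma \ref{lemma_interp_action}: the reconstructed momentum $R_{\Y_\sigma}(M)$ is supported on the $(d-1)$-dimensional faces whereas $R^A_{\X_\sigma}(P)$ is absolutely continuous, so $A(R^A_{\X_\sigma}(P),R_{\Y_\sigma}(M)) = +\infty$ for $M\neq0$ and \ref{asmp_interp_action_stronger} fails outright. Instead I would verify \ref{asmp_interp_action} directly, using the identity $R^\top_{\Y_\sigma} = S_{\Y_\sigma}$ and the explicit Legendre transform formula. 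Since $\theta$ is admissible, $\theta(P_K,P_L)\leq(P_K+P_L)/2$, and since $\bbf\in C^1$ I may replace $\fint_{(K|L)}\bbf\cdot\nbf_{KL}$ by $\bbf(x_K)\cdot\nbf_{KL}$ up to $O(\sigma)$. Grouping the resulting face sum by cells and applying \eqref{equation_isotropy} with $v=\bbf(x_K)$ gives $\tfrac{1}{2|K|}\sum_{L}|(K|L)|d_{KL}(\bbf(x_K)\cdot\nbf_{KL})^2\leq(1+\varepsilon_\sigma)|\bbf(x_K)|^2$, which collapses the sum to $(1+\varepsilon_\sigma)\tfrac12\sum_K P_K|K||\bbf(x_K)|^2$; the latter equals $A^\star(R^A_{\X_\sigma}(P),\bbf)$ up to $O(\sigma)\|R^A_{\X_\sigma}(P)\|$. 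The $O(\sigma)$ replacement errors are absorbed into $\varepsilon_\sigma\|R^A_{\X_\sigma}(P)\|$ after using that isotropy, summed over an orthonormal basis, also controls $\sum_L d_{KL}|(K|L)|\leq C|K|$; dropping the $C^1$ assumption and bounding $\bbf$ by its sup norm gives the second, crude estimate of \ref{asmp_interp_action}. Condition \ref{asmp_samp_actio} follows from the same face-by-face computation in the reverse role: with $\rho$ bounded below and both $\rho,\mbf$ in $C^1$, the denominator $\theta((S_{\X_\sigma}\rho)_K,(S_{\X_\sigma}\rho)_L)$ is $\rho(x_K)+O(\sigma)$ and bounded away from $0$, and isotropy again bounds the cell sum by $(1+\varepsilon_\sigma)\tfrac12|\mbf(x_K)|^2\rho(x_K)^{-1}|K|$, so that summing and using boundedness of the data turns the multiplicative error into the additive $\varepsilon_\sigma$.

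Finally, \ref{asmp_controllability} requires a genuinely discrete construction, because, in contrast with the triangulation case, sampling the singular momenta \eqref{equation_controllability_continuous} through faces would only see a measure-zero set and return zero flux. Instead, given $x,y\in\Omega$, I would take the straight segment between them (a geodesic since $\Omega$ is convex), perturbed generically so that it crosses a chain of cells $K_0\ni x,K_1,\dots,K_n\ni y$ through faces only, and build a one-dimensional flow along this chain: set $\hat P_{K_j}=\frac{1}{(n+1)|K_j|}$, which has unit mass, matching $S_{\X_\sigma}(\delta_x)$ and $S_{\X_\sigma}(\delta_y)$, and let $\hat M_1$ carry on the face $(K_{j-1}|K_j)$ the flux for which $\Div_\sigma\hat M_1=\hat P-S_{\X_\sigma}(\delta_x)$, which amounts to routing mass $\tfrac{n+1-j}{n+1}\leq1$ through face $j$, with $\hat M_2$ the mirror construction from $y$. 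Uniform regularity gives $|K_j|\simeq\sigma^d$, $|(K_{j-1}|K_j)|\simeq\sigma^{d-1}$, $d_{KL}\simeq\sigma$ and $n\simeq d_g(x,y)/\sigma$, whence $\hat P_{K_j}\simeq\sigma^{1-d}/d_g(x,y)$, admissibility of $\theta$ makes $\theta(\hat P_{K_{j-1}},\hat P_{K_j})\simeq\hat P_{K_j}$, and each per-face contribution is $O(\sigma\, d_g(x,y))$; summing the $n$ faces yields $A_\sigma(\hat P,\hat M_i)\leq C d_g(x,y)^2$, so \eqref{equation_controllability} holds with $\omega(r)=Cr^2$. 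The main obstacle is the one-sided action estimate \ref{asmp_interp_action}: it is the only place where isotropy is used, it is genuinely one-sided since the reverse inequality is false for singular reconstructions, and making the error term scale like $\|R^A_{\X_\sigma}(P)\|$ rather than like the action itself requires the auxiliary bound $\sum_L d_{KL}|(K|L)|\leq C|K|$ that isotropy also supplies.
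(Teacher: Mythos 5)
Your proposal is correct and follows essentially the same route as the paper's proof: divergence theorem for \ref{asmp_samp_der}, point-value quadrature plus the isotropy condition for \ref{asmp_samp_actio} and the one-sided Legendre estimate \ref{asmp_interp_action} (verified directly since \ref{asmp_interp_action_stronger} fails for the face-concentrated reconstruction), the symmetrized $O(\sigma^2)$ per-face consistency estimate for \ref{asmp_interp_der}, and the same chain-of-cells construction with linearly decaying fluxes for \ref{asmp_controllability}. The only cosmetic difference is that you derive $\sum_L d_{KL}|(K|L)|\leqslant C|K|$ from isotropy summed over an orthonormal basis where the paper invokes uniform regularity of the mesh; both are valid.
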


\begin{proof}
Let us show that what we have defined satisfies all the requirements of Definition \ref{definition_adapated}. We will not check the different requirements in the order they are stated, but rather highlight the way the objects are chosen. 

We first check commutation between derivation and sampling \ref{asmp_samp_der}. This is just the application of the divergence theorem. Indeed, if $\mbf \in \M(\Omega)^d $ has a smooth density and satisfies $\mbf \cdot \nbf_\Omega = 0$ on $\dr \Omega$ then for $\sigma > 0$ and $K \in \T^\sigma$,  
\begin{equation*}
(S_{\X\sigma} (\nabla \cdot \mbf))_K = \fint_K (\nabla \cdot \mbf) = \sum_{L \sim K} \frac{1}{|K|} \int_{(K|L)} \mbf \cdot \mathbf{n}_{KL} = ((\Div_\sigma \circ S_{\Y_\sigma})(\mbf))_K.  
\end{equation*}
The asymptotic exactitude for the action $A_\sigma$ when evaluated on samples of smooth densities and momenta \ref{asmp_samp_actio} amounts to check that the quadrature formula is correct. Let $\Bc$ and $\Bc'$ be bounded sets of $C^1(\Omega)$ and $C^1(\Omega,\R^d)$ respectively, with functions in $\Bc$ uniformly bounded from below by a positive constant. Let $\rho \in \M(\Omega)$ and $\mbf \in \M(\Omega)^d$ whose density w.r.t. Lebesgue measure (still denoted by $\rho$ and $\mbf$) belong to $\Bc$ and $\Bc'$ respectively. In particular, up to an error controlled by $C \sigma$, we can replace $(S_{\Y_\sigma}(\mbf))_{KL}$ by $\mbf(x_K) \cdot \nbf_{KL}$ and $(S_{\X_\sigma}(\rho))_{K}$ by $\rho(x_K)$ hence 
\begin{align*}
A_\sigma(S_{\X_\sigma}(P), S_{\Y_\sigma}(\mbf)) & = \sum_{(K,L) \in \E^\sigma} \frac{ |(S_{\Y_\sigma}(\mbf))_{KL}|^2  }{ 4 \theta( (S_{\X_\sigma}(\rho))_{K} , (S_{\X_\sigma}(\rho))_{L} )} d_{KL} |(K|L)| \\
& \leqslant \sum_{K \in \T^\sigma} \frac{1}{4 \rho(x_K)} \sum_{L \in \T^\sigma_K} |\mbf(x_k) \cdot \mathbf{n}_{KL}|^2 d_{KL} |(K|L)| + C \sigma \\
& \leqslant (1+ \varepsilon_\sigma) \sum_{K \in \T^\sigma} |K| \frac{|\mbf(x_k)|^2}{2 \rho(x_k)}  + C \sigma 
= (1+\varepsilon_\sigma)(A(\rho, \mbf) + C \sigma) + C \sigma ,  
\end{align*}
where the third inequality comes from \eqref{equation_isotropy} and the last one is just a quadrature formula. As $A(\rho, \mbf) $ is bounded by a constant which depends only on $\Bc$ and $\Bc'$, we get the result. Although we have used \eqref{equation_isotropy} it was not necessary and we could have relied only on \cite[Lemma 5.4]{Gladbach2018} at the price of slightly more involved estimates.   

Now we turn to reconstruction. For the one-sided estimation on the action, we use \ref{asmp_interp_action} rather than \ref{asmp_interp_action_stronger}, as we have not been able to build reconstruction operators such that the latter holds. We rely crucially on the isotropy assumption for this step. Let $\Bc$ a bounded set of $C^1(\Omega, \R^d)$ and $\bbf \in \Bc$, as well as $P \in \X_\sigma$. Using the property $\theta(a,b) \leqslant (a+b)/2$, we get easily that 
\begin{multline*}
A_\sigma^\star( P, R_{\Y_\sigma}^\top (\bbf) ) 
= \sum_{(K,L) \in \E^\sigma} \frac{\theta(P_K, P_L)}{ 4 } ((R_{\Y_\sigma}^\top (\bbf))_{KL})^2 d_{KL} |(K|L)| \\
\leqslant  \frac{1}{4} \sum_{K \in \T^\sigma} P_K \sum_{L \in \T^\sigma_K} ((R_{\Y_\sigma}^\top (\bbf))_{KL})^2 d_{KL} |(K|L)|.  
\end{multline*}  
Given the regularity of $\bbf$, it is clear that $(R_{\Y_\sigma}^\top (\bbf))_{KL}$ can be replaced by $\bbf(x_K) \cdot \nbf_{KL}$ up to an error controlled by $\sigma$. Hence we can write, using the isotropy condition \eqref{equation_isotropy}, that 
\begin{multline*}
A_\sigma^\star( P, R_{\Y_\sigma}^\top (\bbf) ) 
\leqslant \frac{1}{4} \sum_{K \in \T^\sigma} P_K \sum_{L \in \T^\sigma_K} ( \bbf(x_K) \cdot \nbf_{KL} )^2 d_{KL} |(K|L)| + C \sigma \| R^{\CE}_{\X_\sigma} (P) \|  \\ 
\leqslant  (1+ \varepsilon_\sigma) \frac{1}{2} \sum_{K \in \T^\sigma} P_K |K| \left| \bbf(x_K) \right|^2 + C \sigma \| R^{\CE}_{\X_\sigma} (P) \| . 
\end{multline*}  
On the other hand, by smoothness of $\bbf$, it is clear that $|K| |\bbf(x_K)|^2 \leqslant \int_{K} |\bbf|^2 + C |K| \sigma$, and that $\bbf$ is uniformly bounded by a constant that depends only on $\Bc$. Hence we conclude with the definition of $R^A_{\X_\sigma}$ that 
\begin{equation*}
A_\sigma^\star( P, R_{\Y_\sigma}^\top (\bbf) )  \leqslant \frac{1}{2} \sum_{K \in \T^\sigma} P_K \int_K |\bbf|^2 + \varepsilon_\sigma \| R^{\CE}_{\X_\sigma}(P) \| = A^\star( R^A_{\X_\sigma}(P), \bbf ) + \varepsilon_\sigma \| R^{\CE}_{\X_\sigma}(P) \|.
\end{equation*}
To check the second estimate in \ref{asmp_interp_action}, let us take $\bbf \in C(\Omega,\R^d)$, by homogeneity of the formula w.r.t. $\bbf$ we assume that $|\bbf(x)| \leqslant 1$ for all $x \in \Omega$. In particular, $((R_{\Y_\sigma}^\top (\bbf))_{KL})^2 \leqslant 1$ for all $(K,L) \in \E^\sigma$, hence 
\begin{equation*}
A_\sigma^\star( P, R_{\Y_\sigma}^\top (\bbf) ) 
\leqslant \frac{1}{2} \sum_{K \in \T^\sigma} P_K \sum_{L \in \T^\sigma_K} d_{KL} |(K|L)|.  
\end{equation*}  
By uniform regularity of the mesh, up to a constant independent on $\sigma$ the quantity $\sum_{L \in \T_K^\sigma} d_{KL} |(K|L)|$ can be controlled by $|K|$, hence we get the second estimate in \ref{asmp_interp_action}.

On the other hand, to check \ref{asmp_interp_der} the commutation between reconstruction and derivation, let us take $\Bc$ a bounded set of $C^2(\Omega)$ and $\phi \in \Bc$, as well as $M \in \Y_\sigma$. Then  
\begin{align*}
\langle (R^{\CE}_{\X_\sigma} & \circ \Div_\sigma)  (M), \phi \rangle + \langle R_{\Y_\sigma} (M), \nabla \phi \rangle \\ 
& = \sum_{K \in \T^\sigma} \sum_{L \in \T^\sigma_K} \phi(x_K) |(K|L)| M_{KL} + \frac{1}{2} \sum_{(K,L) \in \E^\sigma} \left(  M_{KL} d_{KL} \fint_{(K|L)} \nabla \phi \cdot \nbf_{KL}  \right) \\
& = \sum_{(K,L) \in \E^\sigma} |(K|L)| \frac{M_{KL}}{2} \left(  ( \phi(x_K) - \phi(x_L) ) + d_{KL}  \fint_{(K|L)} \nabla \phi \cdot \nbf_{KL} \right).
\end{align*}
By smoothness of $\phi$, one can say that 
\begin{equation*}
\left| ( \phi(x_K) - \phi(x_L) ) + d_{KL}  \fint_{(K|L)} \nabla \phi \cdot \nbf_{KL} \right| \leqslant C \sigma^2,
\end{equation*}
where $C$ depends only on $\Bc$, hence we deduce that the total error can be bounded by  
\begin{equation*}
\left| \langle (R^{\CE}_{\X_\sigma}  \circ \Div_\sigma)  M, \phi \rangle + \langle R_{\Y_\sigma} (M), \nabla \phi \rangle \right| \leqslant C \sigma \sum_{(K,L) \in \E^\sigma} |M_{KL}| |(K | L)| d_{KL} \leqslant C \sigma \| R_{\Y_\sigma} (M) \|,
\end{equation*}
which is the desired result.

The first and second points \ref{asmp_interp_sampling} and \ref{asmp_interps} are very straightforward to check, this is left to the reader.

Lastly, we need to check \ref{asmp_controllability}, that is controllability. As explained in Remark \ref{rmk_controllability}, we mimic the construction in the continuous case. Let $x, y \in \Omega$ and $K_x, K_y \in \T^\sigma$ the cells to which they respectively belong. By regularity of the mesh, there exists a path $K_x = K_1, K_2,  \ldots , K_Q = K_y$ of length $Q$ such that $K_{i-1} \sim K_{i}$ for every $i$ and $C^{-1} (Q-1) \sigma \leqslant \sum_i |x_{K_{i-1}} - x_{K_i}| \leqslant C (|x-y| + \sigma)$. The density $\hat{P} \in \X_\sigma$ is chosen as 
\begin{equation*}
\hat{P}_{K_i} = \frac{1}{Q |K_i|},
\end{equation*}
and $\hat{P}_K = 0$ if $K$ is not in the chain $\{ K_1, \ldots, K_Q \}$. On the other hand, the momentum is chosen to be $0$ on every edge not connecting a two consecutive cells in $\{ K_1, \ldots, K_R \}$ and for every $i \in \{ 2, \ldots, Q \}$ we set 
\begin{equation*}
(\hat{M}_1)_{K_{i-1} K_i} = - \frac{Q+1-i}{Q |(K_{i-1}|K_i)|} \  \text{ and } \ (\hat{M}_2)_{K_{i-1} K_i} = \frac{i-1}{Q |(K_{i-1}|K_i)|},
\end{equation*}  
and of course the opposite on the edge $(K_i,K_{i-1})$. It is just a matter of a straightforward computation to check that $\Div_\sigma (\Mbf_1) = \hat{P} - S_{\X_\sigma} (\delta_x)$ and $\Div_\sigma (\Mbf_2) = \hat{P} - S_{\X_\sigma} (\delta_y)$. On the other hand, as the mesh is $\sigma$-regular, we know that $|K| \sim C \sigma^d$ and $|(K|L)| \sim C \sigma^{d-1}$ for any $(K,L) \in \E^\sigma$. We can thus estimate 
\begin{multline*}
A_\sigma(\hat{P}, \hat{M}_1) = \sum_{i=2}^{Q} \frac{\left( \frac{Q+1-i}{Q |(K_{i-1}|K_i)|} \right)^2}{2 \theta \left( \frac{1}{Q|K_{i-1}|}, \frac{1}{Q|K_i|} \right) } |(K_{i-1}|K_i)| d_{K_{i-1} K_i} \\ \leqslant \frac{C}{Q} \left( \sum_{i=2}^Q (Q+1-i)^2 \right) \frac{\sigma}{\sigma^{d-1} \sigma^{-d}} \leqslant C ((Q-1) \sigma)^2.
\end{multline*}
Given the estimate $(Q-1) \sigma \leqslant C( |x-y| + \sigma )$, we indeed get the announced result. The estimate for $A_\sigma(\hat{P}, \hat{M}_2)$ is entirely analogous. 

\end{proof}

\section{Extensions}
\label{section_extension}

In this section we investigate two natural extensions: instead of fixing the final value of the density, we add a cost depending on it in the objective functional; alternatively we can also add a running cost depending on the density. Although the first extension is rather easy, the second one is more involved and we only explain what the additional difficulties are without fully addressing them. 


\begin{rmk}
One alternative that we \review{do} not explore is the use of other Lagrangians, that is to replace $|\mbf|^2 / (2 \rho)$ by $\rho L(\cdot, \mbf/\rho)$ where the Lagrangian $L : TX \to [0, + \infty]$ is convex in its second variable but not necessarily quadratic. In such a situation, applying the heat flow does not necessarily decrease the action hence the techniques of proof for Proposition \ref{prop_regularization} would break down. Nevertheless, if $X$ is the torus, one could hope to use convolution instead of applying the heat flow and rely on the convexity of the Lagrangian. We have preferred to explore complicated geometries (that is Riemannian manifolds like in \cite{Lavenant2018}) restricted to quadratic Lagrangians rather than simple geometries (taking $X$ to be a torus) with more general Lagrangians: we find the results more interesting in the first case rather than in the second one, but this is mainly a matter of personal taste.  
\end{rmk}

\subsection{Final penalization of the density}

Given the \emph{minimizing movement scheme} for gradient flows \cite{Jordan1998} in the Wasserstein, it is natural to take $G : \M_+(X) \to [0, + \infty]$ and to consider the following problem, whose unknowns are $\rho \in \M([0,1] \times X)$, $\mbf \in \M([0,1] \times TX)$ and $\rho_1 \in \M_+(X)$ (but $\rho_0 \in \M_+(X)$ is given): 
\begin{equation}
\label{equation_problem_JKO}
\min_{\rho, \mbf, \rho_1} \J_{\rho_0, \rho_1}(\rho, \mbf) + G(\rho_1). 
\end{equation}
In other words, the final value of the density is no longer fixed but penalized with the help of $G$. Of course to approximate the problem the idea is to use $G_\sigma$ an approximation of $G$ which is defined over $\X_\sigma$. The natural requirements for it to be a good approximation of $G$ are the followings. 

\begin{defi}
\label{definition_adapted_final}
Let $(\X_\sigma, \Y_\sigma, A_\sigma, \Div_\sigma)_\sigma$ be a finite dimensional model of dynamical optimal transport with reconstruction and sampling operators $(R^{\CE}_{\X_\sigma}, R^A_{\X_\sigma}, R_{\Y_\sigma}, S_{\X_\sigma}, S_{\Y_\sigma})_\sigma$.

Assume that $G : \M_+(X) \to [0, + \infty]$ is a given functional over the space of measures. For every $\sigma > 0$ we give ourselves $G_\sigma : \X_\sigma \to [0, + \infty]$. The family of functions $(G_\sigma)_\sigma$ is an adapted discretization of $G$ if: 
\begin{enumerate}[label=\textnormal{(A\arabic*)}]
\setcounter{enumi}{7}
\item 
\label{asmp_G_GammaLiminf}
There exists $(\varepsilon_\sigma)_\sigma$ which tends to $0$ when $\sigma \to 0$ such that, for any $\sigma > 0$ and any $P \in \X_\sigma$, 
\begin{equation*}
G( R^A_{\X_\sigma} (P)  ) \leqslant (1+ \varepsilon_\sigma) G_\sigma( P). 
\end{equation*}
\item 
\label{asmp_G_GammaLimsup}
For any $\rho \in \M_+(X)$, there holds 
\begin{equation*}
\limsup_{\sigma \to 0} \, G_\sigma( S_{\X_\sigma} (\rho) ) \leqslant G( \rho ). 
\end{equation*}
\end{enumerate} 
\end{defi}

This definition was chosen for the following result to hold. 

\begin{theo}
Let $(\X_\sigma, \Y_\sigma, A_\sigma, \Div_\sigma)_\sigma$ a family of finite dimensional models of dynamical optimal transport which is adapted to $X$ in the sense of Definition \ref{definition_adapated}, with reconstruction and sampling operators $(R^{\CE}_{\X_\sigma}, R^A_{\X_\sigma}, R_{\Y_\sigma}, S_{\X_\sigma}, S_{\Y_\sigma})_\sigma$. Assume also that $(G_\sigma)_\sigma$ is an adapted discretization of a lower semi-continuous functional $G : \M_+(X) \to [0, + \infty]$ in the sense of Definition \ref{definition_adapted_final}. 

Let $\rho_0 \in \M_+(X)$ be given.

For any $N$ and any $\sigma > 0$, let $(P^{N, \sigma}, \Mbf^{N, \sigma})$ a minimizer of the functional 
\begin{equation*}
(P, \Mbf) \in (\X_\sigma)^{N+1} \times (\Y_\sigma)^N \mapsto \J^{N, \sigma}_{ S_{\X_\sigma}(\rho_0), P_N } (P, \Mbf) + G_\sigma(P_N).
\end{equation*}
Then, in the limit $N \to + \infty$ and $\sigma \to 0$, up to the extraction of a subsequence, $\Rc^{\CE}_{N, \X_\sigma}(P^{N, \sigma})$ and $\Rc^{A}_{N, \X_\sigma}(P^{N, \sigma})$ will converge weakly to the same limit $\rho \in \M([0,1] \times X)$, the measure $R^{A}_{\X_\sigma}(P_N^{N, \sigma})$ will converge weakly to $\rho_1 \in \M_+(X)$ and $\Rc_{N, \Y_\sigma}(\Mbf^{N, \sigma})$ will converge weakly to $\mbf \in \M([0,1] \times TX)$, in addition the triplet $(\rho, \mbf, \rho_1)$ is a solution of the problem \eqref{equation_problem_JKO} and
\begin{equation*}
\lim_{N \to + \infty, \sigma \to 0} \left( \J^{N, \sigma}_{ S_{\X_\sigma}(\rho_0), P^{N, \sigma}_N } (P^{N, \sigma}, \Mbf^{N, \sigma}) + G_\sigma(P^{N, \sigma}_N) \right) =  \J_{\rho_0, \rho_1} ( \rho, \mbf ) + G(\rho_1).
\end{equation*}
\end{theo}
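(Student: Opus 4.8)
The plan is to follow the two--sided strategy already used to deduce Theorem \ref{theo_main} from Theorems \ref{theo_GammaLiminf} and \ref{theo_GammaLimsup}, inserting at each step the contribution of the penalization $G$. I assume throughout that \eqref{equation_problem_JKO} admits a competitor of finite value (otherwise the statement is vacuous), and I write $m$ for the resulting finite infimum of \eqref{equation_problem_JKO}; note that any admissible final datum necessarily shares the total mass of $\rho_0$, since this is forced by the continuity equation.

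\emph{Upper bound (recovery sequence).} For an admissible triple $(\rho, \mbf, \rho_1)$ I would feed the pair $(\rho, \mbf) \in \CE(\rho_0, \rho_1)$ into Theorem \ref{theo_GammaLimsup} with the \emph{fixed} endpoints $\rho_0, \rho_1$. The construction there produces $(P^{N, \sigma}, \Mbf^{N, \sigma})$ whose last component is exactly $P_N^{N, \sigma} = S_{\X_\sigma}(\rho_1)$ (the final time steps interpolate back to the sampled target via Proposition \ref{prop_controllability}), so that $\J^{N, \sigma}_{S_{\X_\sigma}(\rho_0), P_N^{N, \sigma}} = \J^{N, \sigma}_{S_{\X_\sigma}(\rho_0), S_{\X_\sigma}(\rho_1)}$. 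Since $G_\sigma$ is then evaluated at $S_{\X_\sigma}(\rho_1)$, Assumption \ref{asmp_G_GammaLimsup} gives $\limsup_\sigma G_\sigma(S_{\X_\sigma}(\rho_1)) \le G(\rho_1)$, and adding this to the conclusion of Theorem \ref{theo_GammaLimsup} yields
\begin{equation*}
\limsup_{N \to +\infty, \sigma \to 0} \left( \J^{N, \sigma}_{S_{\X_\sigma}(\rho_0), P_N^{N, \sigma}}(P^{N, \sigma}, \Mbf^{N, \sigma}) + G_\sigma(P_N^{N, \sigma}) \right) \le \J_{\rho_0, \rho_1}(\rho, \mbf) + G(\rho_1).
\end{equation*}
Applying this to the optimal triple of \eqref{equation_problem_JKO} and comparing, at each $N, \sigma$, with the minimizers of the discrete penalized functional shows that the minimal discrete energies have $\limsup \le m$; in particular they are bounded along the sequence under consideration.

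\emph{Lower bound (liminf).} This is the delicate direction and mirrors the proof of Theorem \ref{theo_GammaLiminf}. Given the uniform bound just obtained, the first three steps of that proof apply essentially verbatim: masses stay bounded, and up to extraction $\Rc^{\CE}_{N, \X_\sigma}(P^{N, \sigma})$ and $\Rc^A_{N, \X_\sigma}(P^{N, \sigma})$ converge to a common limit $\rho$ while $\Rc_{N, \Y_\sigma}(\Mbf^{N, \sigma})$ converges to $\mbf$. The one change is at the final temporal boundary: since $P_N^{N, \sigma}$ is now free, I would extract further so that $R^{\CE}_{\X_\sigma}(P_N^{N, \sigma})$ converges weakly to some $\rho_1 \in \M_+(X)$ (its mass is controlled by the first step), and Assumption \ref{asmp_interps} forces $R^A_{\X_\sigma}(P_N^{N, \sigma})$ to share the same limit $\rho_1$. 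Passing to the limit in the discrete continuity equation as in the third step then gives $(\rho, \mbf) \in \CE(\rho_0, \rho_1)$ with $\rho_1$ the genuine final value, and the fourth step gives $\A(\rho, \mbf) \le \liminf \J^{N, \sigma}$. For the penalization, Assumption \ref{asmp_G_GammaLiminf} reads $G(R^A_{\X_\sigma}(P_N^{N, \sigma})) \le (1+\varepsilon_\sigma) G_\sigma(P_N^{N, \sigma})$, so the lower semicontinuity of $G$ together with $R^A_{\X_\sigma}(P_N^{N, \sigma}) \rightharpoonup \rho_1$ yields $G(\rho_1) \le \liminf G_\sigma(P_N^{N, \sigma})$. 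Summing the two liminf inequalities (which only sharpens, as $\liminf a_n + \liminf b_n \le \liminf(a_n + b_n)$) produces
\begin{equation*}
\J_{\rho_0, \rho_1}(\rho, \mbf) + G(\rho_1) \le \liminf_{N \to +\infty, \sigma \to 0} \left( \J^{N, \sigma}_{S_{\X_\sigma}(\rho_0), P_N^{N, \sigma}}(P^{N, \sigma}, \Mbf^{N, \sigma}) + G_\sigma(P_N^{N, \sigma}) \right).
\end{equation*}

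\emph{Conclusion.} Writing $E_n$ for the minimal discrete penalized energy along the chosen sequence, combining the two bounds gives $\J_{\rho_0, \rho_1}(\rho, \mbf) + G(\rho_1) \le \liminf_n E_n \le \limsup_n E_n \le m$. Since $(\rho, \mbf, \rho_1)$ is itself admissible for \eqref{equation_problem_JKO}, the left-hand side is at least $m$, so every inequality is an equality: $(\rho, \mbf, \rho_1)$ solves \eqref{equation_problem_JKO} and the penalized energies converge to $m$. I expect the main obstacle to be the liminf step, precisely the need to reconcile the free discrete endpoint $P_N^{N, \sigma}$ in its two roles — as the limiting boundary datum of the continuity equation (through $R^{\CE}_{\X_\sigma}$) and as the argument of the penalization (through $R^A_{\X_\sigma}$) — which is exactly resolved by the asymptotic equivalence \ref{asmp_interps}; the remainder is a transcription of Theorems \ref{theo_GammaLiminf} and \ref{theo_GammaLimsup} together with \ref{asmp_G_GammaLiminf} and \ref{asmp_G_GammaLimsup}.
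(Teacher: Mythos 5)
Your proposal is correct and follows essentially the same route as the paper: the upper bound reuses the recovery sequence of Theorem \ref{theo_GammaLimsup} with $P_N^{N,\sigma}=S_{\X_\sigma}(\rho_1)$ together with \ref{asmp_G_GammaLimsup}, and the lower bound transcribes Theorem \ref{theo_GammaLiminf} after extracting a weak limit $\rho_1$ of the free endpoint, using \ref{asmp_G_GammaLiminf} and the lower semicontinuity of $G$. The only cosmetic difference is that the paper extracts the limit from $R^A_{\X_\sigma}(P_N^{N,\sigma})$ directly while you go through $R^{\CE}_{\X_\sigma}$ and invoke \ref{asmp_interps}; both are equivalent.
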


\begin{proof}
As the proof closely follows the proofs of Section \ref{section_proof}, we will only sketch it. 

We can copy almost line by line the proof of Theorem \ref{theo_GammaLiminf}. Indeed, the uniform control on the masses of the measures $\Rc^{\CE}_{N, \sigma}(P^{N, \sigma})$, $\Rc^{A}_{N, \sigma}(P^{N, \sigma})$ and $\Rc_{N, \Y_\sigma}(\Mbf^{N, \sigma})$ is obtained only through the assumption that $(R^{\CE}_{\X_\sigma} \circ S_{\X_\sigma} )(\rho_0)$ is bounded uniformly in $\sigma$, which is still the case. In particular, we deduce that $R^A_{\X_\sigma}( P^{N, \sigma}_N )$ is bounded in $\M_+(X)$, hence converges to a limit $\rho_1$. Passing to the limit in the continuity equation is done exactly in the same way. Then to pass to the limit in the objective functional, we use the estimate \ref{asmp_G_GammaLiminf} as well as the lower semi-continuity of $G$ to see that  
\begin{equation*}
\liminf_{N \to + \infty, \sigma \to 0}  G_\sigma(P^{N, \sigma}_N) \geqslant G(\rho).
\end{equation*}  
Combining with \eqref{equation_GammaLiminf_A}, we get the analogous of Theorem \ref{theo_GammaLiminf}.

To go the other way around, that is to prove the analogous of Theorem \ref{theo_GammaLimsup}, if $(\rho, \mbf, \rho_1)$ is a solution of the problem \eqref{equation_problem_JKO}, we can use exactly the same discrete competitors as those built in the proof of Theorem \ref{theo_GammaLimsup}. The only new term to handle is the boundary term, but this is easily done with \ref{asmp_G_GammaLimsup} if we take $P^{N, \sigma}_N = S_{\X_\sigma}(\rho_1)$. 
\end{proof}

\subsection{Comments on how to add a running cost depending on the density}

Starting from considerations in mean field games \cite{Benamou2016}, it is also natural to take $F : \M_+(X) \to [0, + \infty]$ convex lower semi-continuous and to consider problems of the form
\begin{equation}
\label{equation_problem_MFG}
\min_{\rho, \mbf} \J_{\rho_0, \rho_1}(\rho, \mbf) + \int_0^1 F(\rho_t) \,\ddr t, 
\end{equation}
where $\rho_0$ and $\rho_1$ are fixed or penalized. For the notation $\rho_t$, we refer to Remark \ref{rmk_rho_curve}. For instance, one can take $F$ to be a quadratic penalization of the density, or (minus) the Boltzmann entropy. The adaptation of the strategy of this article to this case is more involved, and would imply to dive deeper in the details of the discretization. For this reason, and not to overburden the article we will not give precise statements but only give hints about what the additional difficulties are. Let us point out that one can see numerical results in \cite[Section 5.4]{Lavenant2018} when $F$ is a quadratic penalization.

Of course, the approach would be to choose $F_\sigma$ an adapted discretization of $F$ in the sense of Definition \ref{definition_adapted_final}, and then to approximate Problem \eqref{equation_problem_MFG} with the minimization of 
\begin{equation*}
(P, \Mbf) \in (\X_\sigma)^{N+1} \times (\Y_\sigma)^N \mapsto \J^{N, \sigma}_{ S_{\X_\sigma}(\rho_0), S_{\X_\sigma}(\rho_1) }(P,\Mbf) + \tau \sum_{k=1}^N F_\sigma \left( \frac{P_{k-1} + P_k}{2} \right).
\end{equation*}

The analogue of Theorem \ref{theo_GammaLiminf} is very easy to prove once we assume that $F$ is lower semi-continuous. However, in order to prove an analogue of Theorem \ref{theo_GammaLimsup}, one faces two additional difficulties. 
\begin{itemize}
\item[•] Now, in the analogue of the controllability property (Proposition \ref{prop_controllability}), one also needs to control (using the notations of \ref{asmp_controllability}) $F_\sigma(\hat{P})$ with $F(\rho_0)$ and $F(\rho_1)$. If we use the construction of the discretization on triangulations of surfaces (see Section \ref{subsection_finite_elements}), such control would rely on \emph{geodesic convexity} of the functional $F$ (i.e. convexity along geodesics in the Wasserstein space, which is linked to the Ricci curvature of the underlying space), as well as the convexity of $F$, and a finer analysis of the reconstruction and sampling operators. 
\item[•] Moreover, one would want the regularization procedure (Proposition \ref{prop_regularization}) not to increase too much the running cost $\int F(\rho_t) \, \ddr t$. Given the proof of Proposition \ref{prop_regularization}, this is easy if $F$ is either continuous for the topology of weak convergence, or decreases when composed with the heat flow. For instance, $F$ being a potential energy or an internal energy (provided the function giving the density of energy in terms of the density is convex) would easily fit in the framework.
\end{itemize}

\section*{Acknowledgments}

The present article was mostly written while the author was a PhD student at \emph{Université Paris-Sud}, and completed while he was a postdoctoral fellow of the \emph{Pacific Institute for the Mathematical Sciences} \review{at the University of British Columbia}. The author acknowledges the support of ANR MAGA (ANR-16-CE40-0014). He wants to thank Quentin Mérigot for fruitful discussions \review{as well as the anonymous referees for their several comments and suggestions}.

\appendix 

\section{Wasserstein distance}
\label{section_wasserstein}

We briefly recall the definition of the quadratic Wasserstein distance, see \cite{AGS, Villani2008, SantambrogioOTAM} to learn more. We recall that $d_g$ is the geodesic distance on the Riemannian manifold $(X,g)$. The quadratic Wasserstein distance is denoted by $W_2$: if $\mu, \nu \in \M_+(X)$ are measures on $X$ sharing the same total mass then 
\begin{equation*}
W_2^2(\mu, \nu) := \min_{\pi} \iint_{X \times X} d_g(x,y)^2 \,\pi( \ddr x, \ddr y)
\end{equation*}
where $\pi$ is a measure on the product space $X \times X$ whose first marginal is $\mu$ and second marginal is $\nu$. There always exists an optimal $\pi$, and it is called an optimal transport plan between $\mu$ and $\nu$.  

\begin{prop}
For $a > 0$, let $\Prob_a(X) \subset \M_+(X)$ the set of positive measures over $X$ whose total mass is $a$. Then $W_2$ metrizes the weak convergence of measures on $\Prob_a(X)$. 
\end{prop}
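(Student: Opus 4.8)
The plan is to reduce the statement to the case of probability measures and to the first Wasserstein distance $W_1$, for which the characterization of weak convergence is transparent. First I would normalize the mass: for $\mu,\nu \in \Prob_a(X)$ every competitor $\pi$ (of mass $a$) corresponds bijectively to a competitor $\pi/a$ between $\mu/a,\nu/a \in \Prob_1(X)$ with $\iint d_g^2\,\ddr \pi = a \iint d_g^2\,\ddr(\pi/a)$, so that $W_2^2(\mu,\nu) = a\,W_2^2(\mu/a,\nu/a)$; since also $\mu_n \rightharpoonup \mu$ if and only if $\mu_n/a \rightharpoonup \mu/a$, I may assume $a=1$.

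Next I would record two elementary comparison inequalities on the compact manifold $X$, where $d_g$ is bounded by $D := \mathrm{diam}_{d_g}(X)$. Taking $\pi$ optimal for $W_2$ and applying Cauchy--Schwarz against the probability measure $\pi$ gives $W_1(\mu,\nu) \le \iint d_g\,\ddr \pi \le (\iint d_g^2\,\ddr \pi)^{1/2} = W_2(\mu,\nu)$; taking instead $\pi$ optimal for $W_1$ and using $d_g \le D$ gives $W_2^2(\mu,\nu) \le \iint d_g^2\,\ddr \pi \le D \iint d_g\,\ddr \pi = D\,W_1(\mu,\nu)$. Consequently $W_2(\mu_n,\mu)\to 0$ if and only if $W_1(\mu_n,\mu)\to 0$, and it suffices to prove that $W_1$ metrizes weak convergence on $\Prob_1(X)$.

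For $W_1$ I would invoke the Kantorovich--Rubinstein duality $W_1(\mu,\nu) = \sup\{ \int f \,\ddr(\mu-\nu) : f \in C(X),\ \mathrm{Lip}(f)\le 1\}$ (see the references cited in this appendix). If $W_1(\mu_n,\mu)\to 0$, then $|\int f\,\ddr(\mu_n-\mu)| \le \mathrm{Lip}(f)\,W_1(\mu_n,\mu)\to 0$ for every Lipschitz $f$; as Lipschitz functions are uniformly dense in $C(X)$ and the masses are bounded, an $\varepsilon/3$ argument promotes this to $\int g\,\ddr \mu_n \to \int g\,\ddr \mu$ for all $g \in C(X)$, i.e.\ weak convergence. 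Conversely, assume $\mu_n \rightharpoonup \mu$; since $\mu_n,\mu$ have equal mass, in the duality one may restrict to $1$-Lipschitz functions normalized by $f(x_0)=0$ for a fixed base point $x_0$, and this family is uniformly bounded (by $D$) and equi-Lipschitz, hence by Arzel\`a--Ascoli a compact subset $\mathcal{K}$ of $C(X)$.

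The crux of the argument, and the only place where compactness of $X$ is genuinely used, is upgrading pointwise weak convergence to convergence uniform over $\mathcal{K}$. I would cover $\mathcal{K}$ by finitely many uniform balls of radius $\varepsilon$ around $f_1,\dots,f_m$; for $f \in \mathcal{K}$ choosing $f_i$ with $\|f-f_i\|_\infty < \varepsilon$ yields $|\int f\,\ddr(\mu_n-\mu)| \le |\int f_i\,\ddr(\mu_n-\mu)| + 2\varepsilon$, and taking the maximum over the finite set and then $n\to\infty$ gives $\limsup_n W_1(\mu_n,\mu) \le 2\varepsilon$, whence $W_1(\mu_n,\mu)\to 0$. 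Everything outside this equicontinuity/Arzel\`a--Ascoli step is routine bookkeeping, so I expect that uniformity statement to be the main obstacle worth writing out carefully.
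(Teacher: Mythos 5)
Your proof is correct, but it takes a genuinely different route from the paper: the paper disposes of this proposition in one line by citing \cite[Theorem 6.9]{Villani2008} (with the remark that compactness of $X$ is what makes the statement hold without any moment condition), whereas you give a self-contained elementary argument. Your reduction steps are all sound: the scaling $W_2^2(\mu,\nu)=a\,W_2^2(\mu/a,\nu/a)$ is immediate from linearity of the cost in $\pi$; the two-sided comparison $W_1\leqslant W_2$ and $W_2^2\leqslant D\,W_1$ on a bounded space correctly reduces the problem to $W_1$; the normalization $f(x_0)=0$ in Kantorovich--Rubinstein is legitimate precisely because the two measures have equal mass, so the constant drops out; and the Arzel\`a--Ascoli plus finite $\varepsilon$-net argument is the standard (and correct) way to upgrade testing against each fixed Lipschitz function to uniformity over the unit Lipschitz ball. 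You do import Kantorovich--Rubinstein duality and the density of Lipschitz functions in $C(X)$ as black boxes, but these are no heavier than the single citation the paper itself uses. What your approach buys is transparency about where compactness enters (boundedness of $d_g$ for the $W_2\leqslant\sqrt{D\,W_1}$ bound, and precompactness of the normalized Lipschitz ball); what the paper's approach buys is brevity, since this proposition is only used as a convenient tool elsewhere.
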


\begin{proof}
See for instance \cite[Theorem 6.9]{Villani2008}, and we emphasize that the compactness of $X$ is necessary for this statement to hold. 
\end{proof}

Although, as highlighted in Theorem \ref{theorem_J_W2}, the Wasserstein distance is closely related to the problem we are studying, we have only used the definition above as a convenient tool to quantify convergence in measure, see Propositions \ref{prop_regularization} and \ref{prop_controllability}.

\section{Regularizing curves of measures}
\label{section_regularization_curves}

Regularizing curves valued in the space of probability measures has been attacked in a variety of setting. In Proposition \ref{prop_regularization}, as we want both a regularization of the density and the momentum, and we are in a Riemannian manifold possibly with a smooth and convex boundary, we do not think that a statement for this precise situation already exists. We refer the reader to \cite[Chapter 8]{AGS} or \cite[Chapter 5]{SantambrogioOTAM} for regularization arguments on flat spaces, and to \cite[Proposition 4.3]{Otto2005} or \cite{Erbar2010} for regularization on Riemannian manifolds without boundaries.  

The idea is to use the heat flow to regularize because it has the effect of decreasing the action. However, we apply the heat flow only to the density, and we choose the momentum \emph{a posteriori}, once the density is regularized. To keep it consistent with the point of view chosen in this article, we prefer to use the Bakry-\'Emery estimate to prove that the action decreases along the heat flow. 

\bigskip

For $\rho \in \M_+([0,1] \times X)$ and $\rho_0, \rho_1 \in \M_+(X)$ we define 
\begin{equation}
\label{definition_optimal_m}
\A_{\rho_0, \rho_1}(\rho) = \min_{\review{\bar{\mbf}}} \left\{ \A(\rho, \review{\bar{\mbf}}) \ : \ \review{\bar{\mbf}} \in \M([0,1] \times TX) \text{ and } (\rho, \review{\bar{\mbf}}) \in \CE(\rho_0, \rho_1) \right\}.
\end{equation}

\begin{rmk}
In fact $\rho_0$ and $\rho_1$ are superfluous because if $\review{\bar{\mbf}} \in \M([0,1] \times X)$ is such that $(\rho, \review{\bar{\mbf}})$ satisfies the continuity equation (let's say tested again functions compactly supported in $(0,1) \times X$) and $\A_{\rho_0, \rho_1}(\rho, \review{\bar{\mbf}}) < + \infty$, then one can recover $\rho_0$ and $\rho_1$ from $\rho$, see Remark \ref{rmk_rho_curve}. However, as we want to avoid to deal with disintegration, we prefer to stick to this redundant formulation. 
\end{rmk}

The quantity $\A_{\rho_0, \rho_1}(\rho)$ admits a ``dual'' formulation which will be helpful in the subsequent estimates. 

\begin{lm}
\label{lemma_dual_action}
Let $\rho \in \M_+([0,1] \times X)$ and $\rho_0, \rho_1 \in \M_+(X)$. Then there holds 
\begin{equation*}
\A_{\rho_0, \rho_1}(\rho) = \sup_{\phi} \left\{ \langle \rho_1, \phi(1, \cdot) \rangle - \langle \rho_0, \phi(0, \cdot) \rangle - \left\llangle \rho, \dr_t \phi + \frac{1}{2} |\nabla \phi|^2  \right\rrangle  \ : \ \phi \in C^1([0,1] \times X)  \right\}. 
\end{equation*}
\end{lm}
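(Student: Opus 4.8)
The plan is to read the right-hand side as the convex dual of the constrained minimization problem defining $\A_{\rho_0,\rho_1}(\rho)$ in \eqref{definition_optimal_m}, and to establish that there is no duality gap by invoking the Fenchel--Rockafellar theorem in the Banach-space setting of continuous functions and measures.

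I would first dispatch the easy inequality, namely that the supremum is bounded above by $\A_{\rho_0,\rho_1}(\rho)$. Fix $\phi \in C^1([0,1]\times X)$ and any $\bar{\mbf}$ with $(\rho,\bar{\mbf}) \in \CE(\rho_0,\rho_1)$. Testing the continuity equation against $\phi$ gives $\langle \rho_1, \phi(1,\cdot)\rangle - \langle \rho_0, \phi(0,\cdot)\rangle = \llangle \rho, \dr_t \phi\rrangle + \llangle \bar{\mbf}, \nabla \phi\rrangle$, so the quantity inside the supremum equals $\llangle \bar{\mbf}, \nabla\phi\rrangle - \frac{1}{2}\llangle\rho,|\nabla\phi|^2\rrangle$. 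Taking $\bbf=\nabla\phi$ and $a=-\frac{1}{2}|\nabla\phi|^2$ in the Benamou--Brenier formula shows this is $\leq \A(\rho,\bar{\mbf})$; taking the supremum over $\phi$ and then the infimum over admissible $\bar{\mbf}$ gives the claim.

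For the reverse inequality (absence of gap) I would apply Fenchel--Rockafellar duality with $V = C^1([0,1]\times X)$, target space $W = C([0,1]\times X) \times C([0,1]\times X, TX)$, the bounded linear operator $\Lambda \phi = (\dr_t \phi, \nabla\phi)$, the linear functional $F(\phi) = \langle \rho_0, \phi(0,\cdot)\rangle - \langle \rho_1, \phi(1,\cdot)\rangle$, and the convex continuous functional $G(a,\bbf) = \llangle \rho, a + \frac{1}{2}|\bbf|^2\rrangle$ on $W$. The primal value $\inf_\phi\left[F(\phi) + G(\Lambda\phi)\right]$ is exactly $-1$ times the right-hand side of the lemma. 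Identifying $W^\star = \M([0,1]\times X)\times\M([0,1]\times TX)$ and computing conjugates for a multiplier $(\lambda,\mbf)$: since $F$ is linear, $F^\star(\Lambda^\star(\lambda,\mbf))$ is finite (and zero) precisely when $(-\lambda,-\mbf) \in \CE(\rho_0,\rho_1)$; while $G^\star(-\lambda,-\mbf)$ is finite precisely when $\lambda = -\rho$, in which case it equals $\A(\rho,-\mbf)$. Here I crucially use that $\A(\rho,\cdot)$ is the Legendre transform of $\A^\star(\rho,\cdot) = \frac{1}{2}\llangle\rho,|\cdot|^2\rrangle$, valid because $\rho \in \M_+$, so that biconjugation reproduces $\A$. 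The dual problem $\sup_{(\lambda,\mbf)}\left[-F^\star(\Lambda^\star(\lambda,\mbf)) - G^\star(-\lambda,-\mbf)\right]$ thus collapses to $\sup\{-\A(\rho,\bar{\mbf}) : (\rho,\bar{\mbf})\in\CE(\rho_0,\rho_1)\} = -\A_{\rho_0,\rho_1}(\rho)$, and equating primal and dual values yields the desired identity.

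The step I expect to demand the most care is verifying the qualification hypothesis that rules out a duality gap, although here it is mild: I would check it at $\phi_0 = 0$, where $F$ is finite and $G$ is finite and continuous for the uniform norm at $\Lambda\phi_0 = (0,0)$, since $\rho$ is a finite measure and $(a,\bbf)\mapsto\llangle\rho,a+\frac{1}{2}|\bbf|^2\rrangle$ is continuous on $W$. This simultaneously yields the absence of a gap and the attainment of the dual supremum, which is what justifies writing $\min$ rather than $\inf$ in \eqref{definition_optimal_m}. The genuine obstacle is therefore not an estimate but the careful bookkeeping of signs and conjugates, and ensuring the biconjugation step recovers $\A$ exactly, i.e. that the supremum over $\bbf \in C$ in the Benamou--Brenier formula equals $\A(\rho,\cdot)$ for $\rho\in\M_+(X)$.
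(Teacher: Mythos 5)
Your proof is correct, but it follows a genuinely different route from the paper's. The paper works entirely inside the Hilbert space $H = L^2_\rho([0,1]\times X, TX)$: it rewrites $\A_{\rho_0,\rho_1}(\rho)$ as the squared distance from $0$ to the affine set of admissible velocity densities, observes that the optimal velocity, being orthogonal to the direction space of that affine set, lies in the $H$-closure of $\{\nabla\phi : \phi \in C^1\}$, and then obtains the identity by writing $\tfrac12\|\vbf\|_H^2 = \sup_{\wbf}\bigl(\llangle\vbf,\wbf\rrangle_H - \tfrac12\|\wbf\|_H^2\bigr)$ and restricting/approximating with $\wbf = \nabla\phi_n$. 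You instead run Fenchel--Rockafellar in the Banach spaces $C^1([0,1]\times X)$ and $C([0,1]\times X)\times C([0,1]\times X,TX)$; your bookkeeping of the conjugates ($F^\star\circ\Lambda^\star$ encoding $\CE(\rho_0,\rho_1)$ after a sign flip, $G^\star$ forcing $\lambda=-\rho$ and producing $\A(\rho,-\mbf)$ via biconjugation, which is where $\rho\in\M_+$ enters) and your qualification check at $\phi_0=0$ are all sound. The trade-off: the paper's projection argument is more elementary and self-contained, and exhibits the optimal velocity explicitly as an $H$-limit of gradients (a fact reused implicitly when the optimal momentum is taken of the form $\nabla\psi_n\,\tilde\rho_n$ in the proof of Proposition \ref{prop_regularization}); your duality argument requires the general machinery but handles the infeasible case ($\CE(\rho_0,\rho_1)$ admitting no finite-action momentum, where both sides are $+\infty$) uniformly and yields attainment of the minimum in \eqref{definition_optimal_m} as a free byproduct, whereas the paper's proof tacitly assumes a minimizer exists when it names $\vbf$.
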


\begin{proof}
We introduce the Hilbert space $H = L^2_\rho([0,1] \times X, TX)$: it is the space of measurable vector fields $\vbf : [0,1] \times X \to TX$ endowed with scalar product
\begin{equation*}
\llangle \vbf, \wbf \rrangle_H = \iint_{[0,1] \times X} g_x(\vbf(t,x), \wbf(t,x)) \, \rho(\ddr t, \ddr x) 
\end{equation*}
and with a norm $\| \vbf \|_H^2 = \llangle \vbf, \vbf \rrangle_H$, where vector fields which coincide $\rho$-a.e. are considered equal. We know that $\A(\rho, \review{\bar{\mbf}})$ is finite if and only if $\review{\bar{\mbf}}$ has a density w.r.t. $\rho$, hence we can switch back again to take $\wbf$ the density of $\review{\bar{\mbf}}$ w.r.t. $\rho$ as an unknown. Given the definition of $\CE(\rho_0, \rho_1)$ we can write
\begin{equation*}
\A_{\rho_0, \rho_1}(\rho) = \min_{\wbf \in H} \left\{ \frac{1}{2} \| \wbf \|^2_H \ : \  \forall \phi \in C^1([0,1] \times X), \ \llangle \wbf, \nabla \phi \rrangle_H = \langle \rho_1, \phi(1, \cdot) \rangle - \langle \rho_0, \phi(0, \cdot) \rangle -\llangle \rho, \dr_t \phi \rrangle \right\}.
\end{equation*}
Let us call $\vbf$ the optimal vector field in the problem above. We see that $\vbf$ is the projection, in $H$, of the vector $0$ onto the affine space 
\begin{equation*}
Y = \left\{ \wbf \in H \ : \ \forall \phi \in C^1([0,1] \times X), \ \llangle \wbf, \nabla \phi \rrangle_H = \langle \rho_1, \phi(1, \cdot) \rangle - \langle \rho_0, \phi(0, \cdot) \rangle -\llangle \rho, \dr_t \phi \rrangle  \right\}.
\end{equation*}
In particular $\vbf$ is orthogonal to the linear space $Y^\perp$, hence it belongs to the closure in $H$ of $\{ \nabla \phi \ : \ \phi \in C^1([0,1] \times X) \}$. 

To get the conclusion, we simply write that $\frac{1}{2} \| \vbf \|^2_H = \sup_{\wbf \in H} \llangle \vbf, \wbf \rrangle_H - \frac{1}{2} \| \wbf \|^2_H$. Hence, for all $\phi \in C^1([0,1] \times X)$, taking $\wbf = \nabla \phi$, 
\begin{equation*}
\A_{\rho_0, \rho_1}(\rho) = \frac{1}{2} \| \vbf \|^2_H \geqslant \llangle \vbf, \nabla \phi \rrangle_H - \frac{1}{2} \| \nabla \phi \|^2_H = \langle \rho_1, \phi(1, \cdot) \rangle - \langle \rho_0, \phi(0, \cdot) \rangle - \llangle \rho, \dr_t \phi \rrangle - \frac{1}{2} \llangle \rho, |\nabla \phi|^2  \rrangle.
\end{equation*}
Moreover, taking a sequence $(\phi_n)_{n \in \N}$ in $C^1([0,1] \times X)$ such that $\nabla \phi_n$ converges to $\vbf$ in $H$ as $n \to + \infty$, we get the result. 
\end{proof}

Now we can introduce the main tool to regularize on manifold, that is the heat flow. Let $\Phi : [0, \infty) \times C(X) \to C(X)$ denote the heat flow on $X$ with Neumann boundary conditions. Specifically, if $a \in C(X)$, let us call $u : (0, + \infty) \times X \to X$ the unique solution to the heat equation with initial condition $a$, that is the Cauchy problem
\begin{equation*}
\begin{cases}
\dr_s u = \Delta u & \text{ on } (0, + \infty) \times \ring{X}, \\
u \cdot \nbf_{\dr X} = 0 & \text{ on } (0, + \infty) \times \dr X, \\
\dst{ \lim_{s \to 0} u(s, \cdot)  } = a & \text{ in } C(X), 
\end{cases}
\end{equation*}  
where $\nbf_{\dr X} : \dr X \to TX$ denotes the outward normal and $\Delta$ is the Laplace-Beltrami operator on $X$. Then for $s \geqslant 0$ we define $\Phi_s a \in C(X)$ as $u(s, \cdot)$. 

\review{We rely on the Bakry-\'Emery estimate to control the action of the heat flow on spatial derivatives, it is recalled in the following statement.}

\begin{theo}
\label{theorem_bakry_emery}
Let $(X,g)$ be a smooth compact Riemannian manifold with a (possibly empty) smooth and convex boundary. Then there exists $C \in \R$ such that for all $\phi \in C^1(X)$ and any $s \geqslant 0$, 
\begin{equation*}
\left|\nabla \left( \Phi_s \phi \right) \right|^2 \leqslant e^{C s} \Phi_s \left( |\nabla \phi|^2 \right).
\end{equation*}
\end{theo}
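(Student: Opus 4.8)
The plan is to reduce the estimate to the Bochner–Weitzenböck formula together with a parabolic maximum principle, the convexity of $\dr X$ entering precisely through the sign of a boundary term. Since $X$ is compact, its Ricci curvature is bounded below: there exists $K \geqslant 0$ with $\mathrm{Ric} \geqslant -K g$ everywhere. I claim the estimate holds with $C = 2K$. Fix $\phi \in C^1(X)$ and set $g(r, \cdot) := \Phi_r \phi$, which for $r > 0$ is smooth up to the boundary and solves $\dr_r g = \Delta g$ with the Neumann condition $g_x(\nabla g, \nbf_{\dr X}) = 0$ on $\dr X$. Introduce $f(r,y) := |\nabla g(r,y)|_y^2$, which is smooth on $(0,s] \times X$ and continuous up to $r=0$ with $f(0,\cdot) = |\nabla \phi|^2$.

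First I would establish that $f$ is a subsolution of a perturbed heat equation in the interior. Differentiating in time gives $\dr_r f = 2 g_x(\nabla g, \nabla \dr_r g) = 2 g_x(\nabla g, \nabla \Delta g)$, while the Bochner formula reads
\begin{equation*}
\tfrac{1}{2} \Delta |\nabla g|^2 = |\mathrm{Hess}\, g|^2 + g_x(\nabla g, \nabla \Delta g) + \mathrm{Ric}(\nabla g, \nabla g).
\end{equation*}
Subtracting, $\dr_r f - \Delta f = -2|\mathrm{Hess}\,g|^2 - 2\,\mathrm{Ric}(\nabla g, \nabla g) \leqslant 2K f$, using $\mathrm{Ric} \geqslant -K g$ and discarding the nonnegative Hessian term. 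Hence $\tilde{f} := e^{-2Kr} f$ satisfies $\dr_r \tilde{f} - \Delta \tilde{f} \leqslant 0$ on $(0,s] \times \ring{X}$.

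Next I would treat the boundary, which is the only place the geometry of $\dr X$ is used. On $\dr X$ the Neumann condition $g_x(\nabla g, \nbf_{\dr X}) = 0$ forces $\nabla g$ to be tangent to $\dr X$; differentiating this identity in a tangential direction and invoking the definition of the second fundamental form $\mathrm{II}$ of $\dr X$ yields $g_x(\nabla f, \nbf_{\dr X}) = -2\,\mathrm{II}(\nabla g, \nabla g)$. Convexity of $\dr X$ means $\mathrm{II} \geqslant 0$ (this is the convention of \cite{Wang2009}), so $g_x(\nabla \tilde{f}, \nbf_{\dr X}) \leqslant 0$ on $(0,s] \times \dr X$. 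Now let $q(r,\cdot) := \Phi_r(|\nabla \phi|^2)$ solve the Neumann heat flow with initial datum $q(0,\cdot) = |\nabla\phi|^2 = f(0,\cdot)$, and set $D := \tilde{f} - q$. Then $D$ is a subsolution with $D(0,\cdot) \leqslant 0$ and $g_x(\nabla D, \nbf_{\dr X}) \leqslant 0$ on $\dr X$; by the parabolic maximum principle the spatial maximum of $D$ is nonincreasing in time (a maximum at the boundary is excluded by Hopf's lemma, since there the outward normal derivative would be strictly positive), whence $D \leqslant 0$. Evaluating at $r = s$ gives $e^{-2Ks} |\nabla \Phi_s \phi|^2 \leqslant \Phi_s(|\nabla \phi|^2)$, the desired inequality with $C = 2K$.

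The main obstacle is rigor at the boundary: both the identity $g_x(\nabla f, \nbf_{\dr X}) = -2\,\mathrm{II}(\nabla g, \nabla g)$ and the maximum principle with a Neumann-type boundary condition must be justified, and this is exactly where convexity is indispensable — without $\mathrm{II} \geqslant 0$ the boundary term has the wrong sign and the comparison fails. A secondary point is that $\phi$ is only $C^1$, so the Bochner computation is valid only for $r > 0$ where $g$ is smooth up to $\dr X$; this is handled by running the comparison on $[\varepsilon, s]$ and letting $\varepsilon \to 0$, using the continuity of $f$ and $q$ up to $r = 0$ and the continuity of $\Phi$ on $C(X)$. Everything else is the standard Bakry–Émery argument.
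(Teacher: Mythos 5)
Your argument is a genuinely different route from the paper's: the paper disposes of this statement in two lines, observing that compactness gives a uniform lower Ricci bound and then citing Wang's gradient estimate for the Neumann semigroup on manifolds with convex boundary \cite[Theorem 1.1]{Wang2011}. What you have written is, in essence, a direct proof of that cited result — Bochner--Weitzenb\"ock to make $e^{-2Kr}|\nabla \Phi_r\phi|^2$ a subsolution, the identity $\dr_{\nbf_{\dr X}}|\nabla u|^2 = -2\,\mathrm{II}(\nabla u,\nabla u)$ for Neumann solutions to import convexity, and a parabolic comparison with $\Phi_r(|\nabla\phi|^2)$. The interior computation, the boundary identity (including its sign), and the role of $\mathrm{II}\geqslant 0$ are all correct; this buys a self-contained argument where the paper buys brevity.

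There is, however, one step that fails as written: the claim that $f(r,\cdot)=|\nabla\Phi_r\phi|^2$ is continuous up to $r=0$ with value $|\nabla\phi|^2$, on which your ``run the comparison on $[\varepsilon,s]$ and let $\varepsilon\to 0$'' hinges. This is false whenever $\dr_{\nbf_{\dr X}}\phi\neq 0$ somewhere on $\dr X$: in the half-line model $\dr_r u=\dr_{xx}u$ on $x>0$ with $\dr_x u(r,0)=0$, even reflection gives $\dr_x u(r,0)=0$ for every $r>0$ while $\phi'(0)\neq 0$, so $\nabla\Phi_r\phi$ does not converge uniformly to $\nabla\phi$ — there is a boundary layer. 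Consequently $D=\tilde f - q$ need not satisfy $\limsup_{r\to 0}\max_X D(r,\cdot)\leqslant 0$ by continuity alone, and the comparison does not close at the initial time. The gap is repairable — e.g.\ first prove the estimate for smooth $\phi$ with $\dr_{\nbf_{\dr X}}\phi=0$ (for which the $C^1$ continuity at $r=0$ does hold) and then approximate a general $\phi\in C^1(X)$ by such functions uniformly with gradients bounded and converging a.e., or run Wang's semigroup interpolation $r\mapsto\Phi_{s-r}(|\nabla\Phi_r\phi|^2)$ and handle the $r\to 0$ endpoint by dominated convergence — but as stated the limiting argument at $r=0$ is the missing piece, and it is precisely the technical content that the citation to \cite{Wang2011} is covering for the paper.
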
 

\begin{proof}
As $X$ is smooth and compact, its Ricci curvature is bounded from below by $-C/2 \in \R$. To conclude, we refer the reader to \cite[Theorem 1.1]{Wang2011} for the case where $X$ may have a convex boundary.
\end{proof}

We have all the tools at our disposal to prove Proposition \ref{prop_regularization}. In the proof, if $a$ is a function defined on $\R$ and $b$ a function defined on $X$, then $a \otimes b$ is the function defined on $\R \times X$ by $(a \otimes b)(t,x) = a(t) b(x)$. By a slight abuse of notations, if $\rho \in \M([0,1] \times X)$ and $\phi \in C(\R \times X)$ then $\llangle \rho, \phi \rrangle$ denotes the duality product between $\rho$ and the restriction of $\phi$ to $[0,1] \times X$.

\begin{proof}[Proof of Proposition \ref{prop_regularization}]

Let $(\rho, \mbf) \in \M_+([0,1] \times X) \times \M([0,1] \times TX)$ be given, and $\rho_0, \rho_1 \in \M_+(X)$ such that $(\rho, \mbf) \in \CE(\rho_0, \rho_1)$ and  $\A(\rho, \mbf) < + \infty$. 

To regularize, we extend the curve $\rho$ by setting it equal to $\rho_0$ for $t \leqslant 0$ and $\rho_1$ for $t \geqslant 1$, and then we take a convolution in time and apply the heat flow in space. To be rigorous, we do that by duality.  Let $(\chi_n)_{n \in \N}$ be a smooth even approximation of unity on $\R$ such that $\chi_n$ is compactly supported in $[-1/n, 1/n]$ for any $n$. For any $\phi \in C([0,1] \times X)$, we define $\Theta_n(\phi) \in C(\R \times X)$ by, for any $t \in \R$, 
\begin{equation*}
\Theta_n(\phi)(t, \cdot) = \int_0^1 \chi_n(t-s) \Phi_{1/n}( \phi(s, \cdot) ) \, \ddr s.
\end{equation*}   
Then, we define $\tilde{\rho}_n \in \M([0,1] \times X)$ by 
\begin{equation*}
\llangle \tilde{\rho}_n, \phi \rrangle := \left\langle \rho_0, \int_{- \infty}^0 \Theta_n(\phi)(t, \cdot) \, \ddr t \right\rangle + \llangle \rho, \Theta_n(\phi) \rrangle + \left\langle \rho_1, \int_{1}^{+ \infty} \Theta_n(\phi)(t, \cdot) \, \ddr t \right\rangle.
\end{equation*} 
By parabolic regularity and regularity of $\chi_n$, for a fixed $n$ we know that $\Theta_n(\phi)$ belongs to $C^\infty([0,1] \times X)$, with derivatives controlled by $\| \phi \|_{L^1([0,1] \times X)}$. It allows easily, by duality, to say that $\tilde{\rho}_n$ has a density w.r.t. $\ddr x \otimes \ddr t$, and that this density (still denoted by $\tilde{\rho}_n$) belongs to $C^\infty([0,1] \times X)$. Moreover, as $\Theta_n(\phi)(t, \cdot)$ is uniformly bounded from below by a strictly positive constant depending only on $\| \phi(t, \cdot) \|_{L^1(X)}$ if $\phi$ is nonnegative, we know that $\tilde{\rho}_n$ is uniformly bounded from below by a strictly positive constant.  

Now we have to estimate the action, we will do it with the help of Lemma \ref{lemma_dual_action}. First we identify the boundary conditions: let us define $\tilde{\rho}_{n,0}$ and $\tilde{\rho}_{n,1}$ by, for any $\phi \in C(X)$, 
\begin{multline*}
\langle \tilde{\rho}_{n,0}, \phi \rangle := \frac{1}{2} \langle \rho_0, \Phi_{1/n} \phi \rangle + \left\llangle \rho, \chi_n  \otimes ( \Phi_{1/n} \phi) \right\rrangle, \\ 
\text{and } \langle \tilde{\rho}_{n,1}, \phi \rangle := \frac{1}{2} \langle \rho_1, \Phi_{1/n} \phi \rangle + \left\llangle \rho, (\chi_n(1 - \cdot)) \otimes ( \Phi_{1/n} \phi) \right\rrangle. 
\end{multline*}
They could be guessed by the way we have heuristically chosen $\tilde{\rho}_n$, or simply retro-engineered from the computations below. 

Next, we need to understand how $\Theta_n$ commutes with derivatives. For the temporal derivative, \review{an} integration by parts leads to, for $\phi \in C^1([0,1] \times X)$, 
\begin{equation*}
\dr_t [\Theta_n(\phi)](t, \cdot) = \int_0^1 \chi_n'(t-s) \Phi_{1/n}( \phi(s, \cdot) ) \, \ddr s 
= - \chi_n(t-1) \Phi_{1/n} \phi(1, \cdot) + \chi_n(t) \Phi_{1/n} \phi(0, \cdot) + \Theta_n(\dr_t \phi)(t, \cdot).
\end{equation*} 
On the other hand, using Theorem \ref{theorem_bakry_emery} as well as the convexity of the squared norm, 
\begin{equation*}
|\nabla \Theta_n(\phi)|^2 \leqslant e^{C/n} \Theta_n(|\nabla \phi|^2). 
\end{equation*}

Now let us take $\phi \in C^1([0,1] \times X)$ as a test function to evaluate $\A_{\tilde{\rho}_{n,0}, \tilde{\rho}_{n,1}}(\tilde{\rho}_n)$. We compute: 
\begin{align*}
\langle \tilde{\rho}_{n,1} & , \phi(1, \cdot) \rangle - \langle \tilde{\rho}_{n,0}, \phi(0, \cdot) \rangle - \left\llangle \tilde{\rho}_n, \dr_t \phi + \frac{1}{2} |\nabla \phi|^2  \right\rrangle \\
& =  \frac{1}{2} \langle \rho_1, \Phi_{1/n} \phi(1, \cdot) \rangle + \left\llangle \rho, (\chi_n(1 - \cdot) ) \otimes ( \Phi_{1/n} \phi(1, \cdot)) \right\rrangle \\
& - \frac{1}{2} \langle \rho_0, \Phi_{1/n} \phi(0, \cdot) \rangle - \left\llangle \rho, \chi_n  \otimes ( \Phi_{1/n} \phi(0, \cdot)) \right\rrangle \\
& -  \left\langle \rho_0, \int_{- \infty}^0 \Theta_n \left( \dr_t \phi + \frac{1}{2} |\nabla \phi|^2 \right)  (t, \cdot) \, \ddr t \right\rangle  - \left\langle \rho_1, \int_{0}^{+ \infty} \Theta_n \left( \dr_t \phi + \frac{1}{2} |\nabla \phi|^2 \right)(t, \cdot) \, \ddr t \right\rangle \\
& - \left\llangle \rho, \Theta_n \left( \dr_t \phi + \frac{1}{2} |\nabla \phi|^2 \right)  \right\rrangle.
\end{align*}
We do some cleaning: in the fourth line, we discard the terms in $|\nabla \phi|^2$ as they have a sign. In the last line, we use the estimate coming from Theorem \ref{theorem_bakry_emery}. Also, in the fourth and the last lines, we use the explicit expression that we have for $\Theta_n(\dr_t \phi)$. Specifically, for the fourth line:  
\begin{multline*}
\left\langle  \rho_0  , \int_{- \infty}^0 \Theta_n \left( \dr_t \phi + \frac{1}{2} |\nabla \phi|^2 \right)  (t, \cdot) \, \ddr t \right\rangle  \\
\leqslant \left\langle \rho_0, \int_{- \infty}^0 \left( \dr_t [\Theta_n(\phi)](t, \cdot) + \chi_n(t-1) \Phi_{1/n} \phi(1, \cdot) - \chi_n(t) \Phi_{1/n} \phi(0, \cdot)  \right) \, \ddr t \right\rangle \\ 
= \langle \rho_0, \Theta_n(\phi)(0, \cdot) \rangle - \frac{1}{2} \langle \rho_0, \Phi_{1/n} \phi(0, \cdot) \rangle.
\end{multline*}
Of course, a very similar computation holds for the term involving $\rho_1$. On the other hand, we can also estimate in the last line
\begin{multline*}
\left\llangle \rho, \Theta_n \left( \dr_t \phi + \frac{1}{2} |\nabla \phi|^2 \right)  \right\rrangle \\
 \geqslant \left\llangle \rho, \left( \dr_t [\Theta_n(\phi)] + (\chi_n(\cdot-1)) \otimes \Phi_{1/n} \phi(1, \cdot) - \chi_n \otimes \Phi_{1/n} \phi(0, \cdot)  \right)  \right\rrangle + e^{-C/n} \left\llangle \rho, \frac{1}{2} \left| \nabla \Theta_n(\phi) \right|^2 \right\rrangle.
\end{multline*}
Putting all these computations together, we find many cancellations and end up with 
\begin{multline*}
\langle \tilde{\rho}_{n,1}  , \phi(1, \cdot) \rangle - \langle \tilde{\rho}_{n,0}, \phi(0, \cdot) \rangle - \left\llangle \tilde{\rho}_n, \dr_t \phi + \frac{1}{2} |\nabla \phi|^2  \right\rrangle \\
\leqslant \langle \rho_1, \Theta_n(\phi)(1, \cdot) \rangle - \langle \rho_0, \Theta_n(\phi)(0, \cdot) \rangle - \llangle \rho,  \dr_t [\Theta_n(\phi)] \rrangle - e^{-C/n} \left\llangle \rho, \frac{1}{2} \left| \nabla \Theta_n(\phi) \right|^2 \right\rrangle.
\end{multline*}
The right hand side can be bounded using Lemma \ref{lemma_dual_action} for $\A_{\rho_0, \rho_1}(\rho)$ with test function $e^{-C/n} \Theta_n(\phi)$. Hence we get 
\begin{equation*}
\langle \tilde{\rho}_{n,1}  , \phi(1, \cdot) \rangle - \langle \tilde{\rho}_{n,0}, \phi(0, \cdot) \rangle - \left\llangle \tilde{\rho}_n, \dr_t \phi + \frac{1}{2} |\nabla \phi|^2  \right\rrangle \leqslant e^{C/n} \A_{\rho_0, \rho_1}(\rho).
\end{equation*}
Taking the supremum over $\phi$ and given Lemma \ref{lemma_dual_action}, we see that 
\begin{equation*}
\A_{\tilde{\rho}_{n,0}, \tilde{\rho}_{n,1}}(\tilde{\rho}_n) \leqslant e^{C/n} \A_{\rho_0, \rho_1}(\rho) \leqslant e^{C/n} \A(\rho, \mbf).
\end{equation*}

As we have seen above, the measure $\tilde{\rho}_n$ admits a smooth density bounded from below, and thanks to the equation above its action can be controlled. They are two things left to do: check that the boundary values of $\tilde{\rho}_n$ are almost $(\rho_0, \rho_1)$, and choose the optimal momentum. 

For the boundary values, we need to check that $\tilde{\rho}_{n,0}$ converges weakly to $\rho_0$ as $n \to + \infty$ (the computation is almost identical for $\tilde{\rho}_{n,1}$). Let $\phi \in C^1(X)$ a test function. Let $\Gamma_n : [0, + \infty) \to \R$ a primitive of $-\chi_n$ whose value is $1/2$ for $t=0$ (hence $\Gamma_n(t) = 0$ for $t \geqslant 1/n$). As a test function in the continuity equation, we use $\Gamma_n \otimes (\Phi_{1/n} \phi)$ to write
\begin{equation*}
\frac{1}{2} \langle \rho_0, \Phi_{1/n} \phi \rangle = \llangle \rho, \chi_n  \otimes ( \Phi_{1/n} \phi ) \rrangle - \llangle \mbf, \Gamma_n \otimes (\nabla (\Phi_{1/n} \phi)) \rrangle.
\end{equation*} 
Notice that $\Gamma_n \leqslant \1_{[0,1/n]}$ while $\nabla (\Phi_{1/n} \phi)$ is bounded uniformly, for instance thanks to Theorem \ref{theorem_bakry_emery}. As $\mbf$ is a finite measure, by dominated convergence
\begin{equation*}
\lim_{n \to + \infty} \left( \llangle \mbf, \Gamma_n \otimes (\nabla (\Phi_{1/n} \phi) \rrangle \right) = 0.
\end{equation*}
With the help of this information, we can conclude that 
\begin{equation*}
\lim_{n \to + \infty} \langle \tilde{\rho}_{n,0}, \phi \rangle = \langle \rho_0, \phi \rangle.
\end{equation*}
As the function $\phi$ is smooth but arbitrary, \review{we conclude that $\tilde{\rho}_{n,0}$ converges weakly to $\rho_0$, hence $W_2(\tilde{\rho}_{n,0}, \rho_0)$ also converges to $0$ as the mass of $\tilde{\rho}_{n,0}$ coincides with the one of $\rho_0$}.

Eventually, we need to choose the momentum: we will choose the optimal one given $\tilde{\rho}_n$. Recall that $\tilde{\rho}_n \in C^\infty([0,1] \times X)$ denotes, by abuse of notations, the density of $\tilde{\rho}_n$ w.r.t. $\ddr t \otimes \ddr x$. For each $t \in [0,1]$ we choose $\psi_n(t, \cdot)$ the unique solution with $0$-mean of the elliptic equation 
\begin{equation}
\label{equation_continuity_equation_smooth}
\nabla \cdot (  \tilde{\rho}_n(t,\cdot) \nabla \psi_n(t, \cdot) ) = -\dr_t \tilde{\rho}_n(t, \cdot)
\end{equation}
with Neumann boundary conditions \review{$\nabla \psi_n(t, \cdot) \cdot \nbf_{\dr X} = 0$ on $\dr X$}. As $\tilde{\rho}_n(t, \cdot)$ is smooth and bounded from below, this equation always admits a solution. Such a solution is smooth by elliptic regularity, up to the boundary as $\dr X$ is smooth. By setting $\tilde{\mbf}_n = \nabla \psi_n \tilde{\rho}_n$, we get that $(\tilde{\rho}_n, \tilde{\mbf}_n)$ satisfies the continuity equation ($\psi_n$ was built for that in \eqref{equation_continuity_equation_smooth}) and 
\begin{equation*}
\A(\tilde{\rho}_n, \tilde{\mbf}_n) = \A_{\tilde{\rho}_{n,0}, \tilde{\rho}_{n,1}}(\tilde{\rho}_n).
\end{equation*}
(For the latter, one can use for instance Lemma \ref{lemma_dual_action} with $\psi_n$ as a test function). As a consequence, $\A(\tilde{\rho}_n, \tilde{\mbf}_n) \leqslant e^{C/n} \A(\rho, \mbf)$, and we set $(\tilde{\rho}, \tilde{\mbf}) = (\tilde{\rho}_n, \tilde{\mbf}_n)$ for $n$ large enough to reach the conclusion of the proposition (large enough so that the temporal boundary conditions almost match and the increase in the action is small enough). 

\end{proof}

\begin{rmk}
\label{rmk_m_not_optimal}
Using the notations of the proof of Proposition \ref{prop_regularization} above, it is easy to check that $\tilde{\rho}_n$ converges to $\rho$ weakly in $\M_+([0,1] \times X)$.
\review{Moreover, we have in fact proved that
\begin{equation*}
\limsup_{n \to + \infty} \A(\tilde{\rho}_n, \tilde{\mbf}_n) \leqslant  \A_{\rho_0, \rho_1}(\rho). 
\end{equation*}
So if $\A_{\rho_0, \rho_1}(\rho) < \A(\rho, \mbf)$, that is if the moment $\mbf$ is not the optimal one in \eqref{definition_optimal_m}, then by lower semi-continuity of $\A$ we cannot have $\lim_n \tilde{\mbf}_n = \mbf$. This is not that surprising because $\tilde{\mbf}_n$ was defined thanks the solutions of the elliptic equation \eqref{equation_continuity_equation_smooth} which depend only on $\tilde{\rho}_n$ (hence $\rho$), but not on $\mbf$. }
\end{rmk}

\bibliographystyle{apa}
\bibliography{bibliography}

\end{document}